\documentclass[12pt]{article}
\usepackage{amsmath,amssymb,amsthm}
\usepackage{hyperref}
\setlength{\textwidth}{6.3in}
\setlength{\textheight}{8.7in}
\setlength{\topmargin}{0pt}
\setlength{\headsep}{0pt}
\setlength{\headheight}{0pt}
\setlength{\oddsidemargin}{0pt}
\setlength{\evensidemargin}{0pt}
%%%%%%%%%% Start TeXmacs macros

\newcommand{\tmmathbf}[1]{\ensuremath{\boldsymbol{#1}}}

\newtheorem{theorem}{Theorem}[section]
\newtheorem{corollary}[theorem]{Corollary}
\newtheorem{lemma}[theorem]{Lemma}

\newtheorem{definition}[theorem]{Definition}
\newtheorem{remark}[theorem]{Remark}

\numberwithin{equation}{section}
\headsep=17pt
\parskip=.5pt
\parindent=12pt

\usepackage{amssymb,latexsym,amsmath,epsfig,amssymb,amsthm, amsmath,color}

\makeatletter
\definecolor{nicegreen}{rgb}{0,0.5,0}
\definecolor{green}{rgb}{0,0.5,0.5}
\definecolor{grape}{rgb}{0.8,0,0.5}

\def\F{\mathbb{F}_q}
\def\P{\mathcal{P}}
\def\L{\mathcal{L}}

\def\S{\mathcal{S}}
\def\V{\mathcal{V}}
\def\Q{\mathcal{Q}}

\def\HH{\mathcal{H}}

\def\v{\mathbf{v}}
\def\p{\mathbf{p}}
\def\q{\mathbf{q}}

\def\FF{\mathbb{F}_q}
\def\E{\mathcal{E}}
\begin{document}
\title{Incidence bounds and applications over finite fields}
\author{Nguyen Duy Phuong \thanks{ Vietnam National University,
    Email: {\tt duyphuong@vnu.edu.vn
}}\and
  Thang Pham\thanks{EPFL, Lausanne, Switzerland. Research partially supported by Swiss National Science Foundation
Grants 200020-144531 and 200021-137574.
    Email: {\tt thang.pham@epfl.ch}}
\and
Nguyen Minh Sang\thanks{ Vietnam National University,
    Email: {\tt sangnmkhtnhn@gmail.com
}}
\and
Claudiu Valculescu\thanks{ EPFL, Lausanne, Switzerland. Research partially supported by Swiss National Science Foundation
Grants 200020-144531 and 200021-137574.
    Email: {\tt adrian.valculescu@epfl.ch}}
\and
    Le Anh Vinh\thanks{Vietnam National University, Email: {\tt vinhla@vnu.edu.vn
}}
}
\date{}
\maketitle
\begin{abstract}
In this paper we introduce a unified approach to deal with incidence problems between points and varieties over finite fields. More precisely, we prove that the number of incidences $I(\P, \V)$ between a set $\P$ of points and a set $\V$ of varieties of a certain form satisfies
\[\left\vert I(\P,\V)-\frac{|\P||\V|}{q^k}\right\vert\le q^{dk/2}\sqrt{|\P||\V|}.\]

This result is a generalization of the results of Vinh (2011), Bennett et al. (2014), and Cilleruelo et al. (2015). As applications of our incidence bounds, we obtain results on the pinned value problem and the Beck type theorem for points and spheres.

Using the approach introduced, we also obtain a result on the number of distinct distances between points and lines in $\mathbb{F}_q^2$, which is the finite field analogous of a recent result of Sharir et al. (2015).
\end{abstract}
\section{Introduction}
In 1983, Szemer\'{e}di and Trotter \cite{st} proved that for any set $\P$ of $n$ points, and any set $\L$ of $n$ lines in the plane, the number of incidences between points of $\P$ and lines from $\L$ is asymptotically at most $n^{4/3}$. Apart from being interesting in itself and being a useful tool for various other discrete mathematics problem, the Szemer\'{e}di-Trotter theorem allowed various exentsions and generalizations. T\'{o}th proved that the same bound holds when we work over the complex plane (see \cite{to} for more details). Pach and Sharir \cite{ps} generalized the Szemer\'{e}di-Trotter theorem to the case of points and curves \cite{ps}.

Let $\F$ be a finite field of $q$ elements where $q$ is an odd prime power. Let $\P$ be a set of points and $\mathcal{L}$ be a set of lines in $\mathbb{F}_q^2$, and $I(\P,\L)$ be the number of incidences between $\P$ and $\L$.  In \cite{bourgain-katz-tao}, Bourgain, Katz, and Tao proved that if one has $N$ lines and $N$ points in the plane $\F^2$ for some $1\ll N\ll q^2$, then there are at most $O(N^{3/2-\epsilon})$ incidences. Here and throughout, $X\gtrsim Y$ means that $X\ge CY$ for some constant $C$ and $X\gg Y$ means that $Y=o(X)$, where $X,Y$ are viewed as functions of the parameter $q$.   The study of incidence problems over finite fields received a considerable amount of attention in recent years \cite{iosevichetal, gro,hr, jo, kollar, thang, solymosi, lund,tv, vinh-incidence, vinhpointline}.

Note that the bound $N^{3/2}$ can be easily obtained from extremal graph theory.  The relation between $\epsilon$ and $\alpha$ in the result of Bourgain, Katz, and Tao is difficult to determine, and it is far from being tight. If $N=\log_2\log_6\log_{18} q-1$, then Grosu \cite{gro} proved that one can embed the point set and the line set to $\mathbb{C}^2$ without changing the incidence structure. Thus it follows from a tight bound on the number of incidences between points and lines in $\mathbb{C}^2$ due to T\'{o}th \cite{to} that $I(\P,\L)=O(N^{4/3})$. By using methods from spectral graph theory, the fifth listed author \cite{vinh-incidence} proved the tight bound for the case $N\gg q$ as follows.
\begin{theorem}[\textbf{Vinh}, \cite{vinh-incidence}]\label{vinh-pl}
Let $\P$ be a set of points and $\L$ be a set of lines in $\mathbb{F}_q^2$. Then we have
\begin{equation}\label{vinh-in}\left\vert I(\P,\L)-\frac{|\P||\L|}{q}\right\vert\le q^{1/2}\sqrt{|\P||\L|}.\end{equation}
\end{theorem}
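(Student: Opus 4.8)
The plan is to realize $I(\P,\L)$ as a bilinear form in the incidence matrix of the affine plane and to control it using the spectral gap of that matrix (an instance of the bipartite expander mixing lemma). Let $M$ be the $0/1$ incidence matrix whose rows are indexed by all $q^2$ points of $\mathbb{F}_q^2$ and whose columns are indexed by all $q^2+q$ lines of $\mathbb{F}_q^2$, with $M_{p,\ell}=1$ precisely when $p\in\ell$. Writing $\mathbf{1}_\P$ and $\mathbf{1}_\L$ for the indicator (column) vectors of $\P$ and $\L$ in the point space and line space respectively, we have exactly $I(\P,\L)=\mathbf{1}_\P^{\,T}M\,\mathbf{1}_\L$, so the whole problem becomes estimating this bilinear form for arbitrary subsets.

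The key step, and the one I expect to carry all the weight, is the spectral analysis of $M$. The affine plane is a $2$-design: any two distinct points lie on a unique common line, and every point lies on $q+1$ lines. Hence the $(p,p')$ entry of $MM^T$ equals $q+1$ if $p=p'$ and $1$ otherwise, that is $MM^T=qI+J$ on the $q^2$-dimensional point space, where $I$ is the identity and $J$ the all-ones matrix. The eigenvalues of $qI+J$ are $q^2+q$ (once, with eigenvector $\mathbf{1}$) and $q$ (with multiplicity $q^2-1$), so the singular values of $M$ are $\sqrt{q^2+q}$ together with $\sqrt{q}$ on the entire orthogonal complement of $\mathbf{1}$. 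Concretely, for any $f\perp\mathbf{1}$ one has $Jf=0$ and therefore $\|M^Tf\|^2=f^TMM^Tf=q\|f\|^2$, which is precisely the spectral gap we need.

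With the spectrum in hand the remainder is routine. Decompose $\mathbf{1}_\P=\frac{|\P|}{q^2}\mathbf{1}+f$ and $\mathbf{1}_\L=\frac{|\L|}{q^2+q}\mathbf{1}'+g$ with $f,g$ orthogonal to the respective all-ones vectors $\mathbf{1}$ and $\mathbf{1}'$. Using $\mathbf{1}^TM=q\,\mathbf{1}'^{T}$ (each line has $q$ points) and $M\mathbf{1}'=(q+1)\mathbf{1}$ (each point lies on $q+1$ lines), the two cross terms vanish and the main term evaluates to $\frac{|\P||\L|}{q^2(q^2+q)}\cdot q^2(q+1)=\frac{|\P||\L|}{q}$. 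The leftover is exactly $f^TMg=(M^Tf)^Tg$, which by Cauchy--Schwarz and the spectral gap is at most $\|M^Tf\|\,\|g\|\le\sqrt{q}\,\|f\|\,\|g\|$. Finally $\|f\|^2=|\P|-|\P|^2/q^2\le|\P|$ and $\|g\|^2=|\L|-|\L|^2/(q^2+q)\le|\L|$, giving $|f^TMg|\le q^{1/2}\sqrt{|\P||\L|}$ and hence the claimed bound. The only genuine obstacle is the eigenvalue computation; everything else is bookkeeping, and the clean constant $q^{1/2}$ is exactly what the $2$-design structure of $\mathbb{F}_q^2$ produces.
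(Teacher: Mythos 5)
Your proposal is correct and follows essentially the same route as the paper: both arguments rest on the spectral gap $\sqrt{q}$ of the bipartite point--line incidence graph of $\mathbb{F}_q^2$, and your decomposition-plus-Cauchy--Schwarz step is precisely the proof of the paper's expander mixing lemma (Lemma \ref{expander}) inlined. The only cosmetic difference is how the gap is extracted: you read it off the $2$-design identity $MM^T=qI+J$ for the incidence matrix, while the paper gets it from the walk-counting identity $M^3=q^{dk}M+(q^d-1)q^{k(d-1)}K$ for the full bipartite adjacency matrix (specialized to $d=k=1$), which is equivalent bookkeeping.
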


It follows from Theorem \ref{vinh-pl} that when $N\ge q^{3/2}$, the number of incidences between $\P$ and $\L$ is asymptotically at most $(1+o(1))N^{4/3}$ (this meets the Szemer\'{e}di-Trotter bound). Furthermore, if $|\P||\L|\gg q^3$, then the number of incidences is close to the expected value $|\P||\L|/q$. The lower bound in the theorem is also proved to be sharp up to a constant factor, in the sense that there is a set of points $\P$ and a set of lines $\L$ with $|\P|=|\L|=q^{3/2}$ that determines no incidence (for details see \cite{vinhpointline}).
Theorem \ref{vinh-pl} has various applications in several combinatorial number theory problems (for example \cite{hen1, hen2, area}).

The main purpose of this paper is to introduce a unified approach, which allows us to deal with incidence problems between points and certain families of varieties. As applications of incidence bounds, we obtain results on the pinned value problem and the Beck type theorem for points and spheres. Using this approach, we also obtain a result on the number of distinct distances between points and lines in $\mathbb{F}_q^2$.

\subsection{Incidences between points and varieties}
We first need the following definitions.
\begin{definition}
Let $S$ be a set of polynomials in $\F[x_1,\ldots,x_d]$. The variety determined by $S$ is defined as follows
\[V(S):=\{\mathbf{p}\in \F^d ~\colon f(\mathbf{p})=0 ~\textup{for all }~ f\in S\}.\]
\end{definition}

Let $h_1(\mathbf{x}), \ldots, h_k(\mathbf{x})$ be fixed polynomials of degree at most $q-1$ in $\mathbb{F}_q[x_1,\ldots,x_d]$, and let $\mathbf{b}_i=(b_{i1},\ldots,b_{id})$, with $1\leq i\leq k$, be fixed vectors in $(\mathbb{Z}^+)^d$, and $\gcd(b_{ij}, q-1)=1$ for all $1\le j\le d$. For any $k$-tuple $(\mathbf{a}_1,\ldots, \mathbf{a}_k)$ with $\mathbf{a}_i=(a_{i1},\ldots,a_{id}, a_{i(d+1)})\in \F^{d+1}$, we define
\[f_i(\mathbf{x}, \mathbf{a}_i):=h_i(\mathbf{x})+\mathbf{a}_i\cdot \mathbf{x}^{\mathbf{b}_i},~\textup{where}~ \mathbf{a}_i\cdot \mathbf{x}^{\mathbf{b}_i}:=\sum_{j=1}^da_{ij}x_j^{b_{ij}}+a_{i(d+1)}.\]

Also, we define the corresponding families of varieties as follows:
\[\label{hai}V_{\mathbf{a}_1,\ldots,\mathbf{a}_k}:=V\left(x_{d+1}-f_1\left(\mathbf{x},\mathbf{a}_1),\ldots,x_{d+k}-f_k(\mathbf{x},\mathbf{a}_k\right)\right)\subseteq \F^{d+k}, \text{ and}\]
\[\label{mot}W_{\mathbf{a}_1,\ldots,\mathbf{a}_k}:=V\left(f_1(\mathbf{x},\mathbf{a}_1),\ldots,f_k(\mathbf{x},\mathbf{a}_k)\right) \subseteq \F^{d}.\]

Similarly to incidences between points of lines, given a set $\P$ of points and a set $\V$ of varieties, we define the number of incidences  $I(\P,\V)$ between $\P$ and $\V$ as the cardinality of the set $\{(p,v) \in \P \times \V \mid p \in v\}$. Our first main result is as follows:
\begin{theorem}\label{dl2}
Let $\P$ be a set of points in $\mathbb{F}_q^d\times \mathbb{F}_q^k$ and $\V$ a set of varieties of the form $V_{\mathbf{a}_1,\ldots, \mathbf{a}_k}$ defined above. Then the number of incidences between $\P$ and $\V$ satisfies
\[\left\vert I(\P,\V)-\frac{|\P||\V|}{q^k}\right\vert\le q^{dk/2}\sqrt{|\P||\V|}.\]
\end{theorem}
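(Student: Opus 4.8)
The plan is to follow the spectral (pseudorandomness) method, realizing $I(\P,\V)$ as an edge count in a bipartite graph and controlling it through the second singular value. First I would form the full bipartite incidence graph $G$ whose left vertex set is $\F^{d+k}$ (all possible points) and whose right vertex set is the parameter space $\F^{k(d+1)}$ indexing all varieties $V_{\mathbf{a}_1,\ldots,\mathbf{a}_k}$, with an edge whenever a point lies on a variety. Let $N$ be the corresponding bipartite adjacency matrix. A short count shows $G$ is biregular: each point $(\mathbf{x},\mathbf{y})$ lies on exactly $q^{dk}$ varieties (for each $i$ the constraint $y_i=f_i(\mathbf{x},\mathbf{a}_i)$ is a single affine equation in $\mathbf{a}_i\in\F^{d+1}$, giving $q^d$ choices, and the $k$ components decouple), while each variety carries exactly $q^d$ points. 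Writing $\mathbf{1}_\P,\mathbf{1}_\V$ for the indicator vectors, $I(\P,\V)=\mathbf{1}_\P^{T}N\mathbf{1}_\V$, and biregularity identifies the leading singular triple with the all-ones directions and singular value $\sqrt{q^{dk}\cdot q^d}=q^{d(k+1)/2}$, producing the main term $|\P||\V|/q^k$.

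The core of the argument is to compute $M:=NN^{T}$, the $q^{d+k}\times q^{d+k}$ matrix whose $(P,P')$ entry counts the varieties through both $P=(\mathbf{x},\mathbf{y})$ and $P'=(\mathbf{x}',\mathbf{y}')$. For each component $i$ this is a system of two affine equations in the unknowns $(a_{i1},\ldots,a_{i(d+1)})$, and the systems decouple over $i$. Here the hypothesis $\gcd(b_{ij},q-1)=1$ is essential: it makes $x\mapsto x^{b_{ij}}$ a bijection of $\F$, so the coefficient vectors $(x_1^{b_{i1}},\ldots,x_d^{b_{id}},1)$ and $((x_1')^{b_{i1}},\ldots,(x_d')^{b_{id}},1)$ coincide precisely when $\mathbf{x}=\mathbf{x}'$. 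I would then split into cases: if $P=P'$ the count is $q^{dk}$; if $\mathbf{x}=\mathbf{x}'$ but $\mathbf{y}\neq\mathbf{y}'$ some component is inconsistent and the count is $0$; and if $\mathbf{x}\neq\mathbf{x}'$ the two coefficient vectors are linearly independent in every component, so each system has rank $2$ and $q^{d-1}$ solutions, giving $q^{k(d-1)}$ in total. This is the step I expect to be the main obstacle, since the entire spectral gap hinges on showing that all off-block entries are exactly equal to $q^{k(d-1)}$.

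With this three-valued structure in hand, $M$ can be written as $q^{k(d-1)}J-q^{k(d-1)}B+q^{dk}I$, where $J$ is the all-ones matrix and $B$ is block-diagonal with $q^d$ all-ones blocks of size $q^k$ (grouping points by their $\mathbf{x}$-coordinate). These matrices share an eigenbasis, and a direct computation yields the eigenvalues of $M$: the value $q^{d(k+1)}$ with multiplicity one on the all-ones direction, the value $0$ on the $(q^d-1)$-dimensional space of block-constant mean-zero vectors, and the value $q^{dk}$ on the remaining $q^d(q^k-1)$-dimensional space of block-wise mean-zero vectors. Hence the second largest singular value of $N$ equals $q^{dk/2}$.

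Finally I would invoke the bipartite expander mixing lemma. Decomposing $\mathbf{1}_\P$ and $\mathbf{1}_\V$ into their all-ones components plus orthogonal remainders $\mathbf{u},\mathbf{w}$, the cross terms vanish by biregularity, the all-ones part contributes $|\P||\V|/q^k$, and the remainder is bounded by $q^{dk/2}\|\mathbf{u}\|\,\|\mathbf{w}\|\le q^{dk/2}\sqrt{|\P||\V|}$ using $\|\mathbf{u}\|^2\le|\P|$ and $\|\mathbf{w}\|^2\le|\V|$. Combining these estimates gives the stated inequality.
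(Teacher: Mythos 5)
Your proof is correct, and at its core it is the same spectral argument as the paper's: the same bipartite graph between the point set $\F^{d}\times\F^{k}$ and the parameter space $\left(\F^{d+1}\right)^{k}$, the same biregularity counts ($q^{dk}$ on the point side, $q^{d}$ on the variety side), the same crucial two-point count that hinges on $\gcd(b_{ij},q-1)=1$, and the same expander-mixing finish. Where you differ is in how the spectral gap is extracted, and the difference is worth recording. The paper counts walks of length three and proves the identity $M^{3}=q^{dk}M+(q^{d}-1)q^{k(d-1)}K$ for the full adjacency matrix (which forces a case analysis according to whether the endpoints $\p$ and $\v$ are adjacent), then observes that any eigenvector for $\lambda_{3}$ is annihilated by $K$, so $\lambda_{3}^{3}=q^{dk}\lambda_{3}$. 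You instead stop at the Gram matrix $NN^{T}$, whose entries are exactly the co-degrees you computed, write it as $q^{k(d-1)}J-q^{k(d-1)}B+q^{dk}I$ with $B$ block diagonal according to the $\mathbf{x}$-coordinate, and diagonalize explicitly. The two facts are essentially equivalent (multiplying your identity on the right by $N$, and using $BN=J'$ and $JN=q^{d}J'$ for the rectangular all-ones matrix $J'$, recovers the paper's cubic identity), but your packaging is a little cleaner and more informative: the two-point count is simpler than the three-step walk count, and you obtain the full spectrum of $NN^{T}$ (eigenvalues $q^{d(k+1)}$, $q^{dk}$, $0$ with multiplicities $1$, $q^{d}(q^{k}-1)$, $q^{d}-1$) rather than only a bound on $|\lambda_{3}|$; the paper's trick buys the bound without ever exhibiting eigenspaces. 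One point you leave implicit: $\V$ is a set of varieties (point sets), while your right vertex class is the parameter space, so to write $I(\P,\V)$ as an edge count you must identify each member of $\V$ with one parameter tuple realizing it; any such choice is automatically injective, so nothing breaks, but the paper settles the variety/parameter correspondence explicitly in its Lemma \ref{bd1} via Schwartz--Zippel.
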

As a consequence of Theorem \ref{dl2}, we obtain the following result.
\begin{corollary}\label{dl1}
Let $\P$ be a set of points in $\mathbb{F}_q^d$ and $\V$ be a set of varieties of the form $W_{\mathbf{a}_1,\ldots, \mathbf{a}_k}$ defined above. Then the number of incidences between $\P$ and $\V$ satisfies
\[\left\vert I(\P,\V)-\frac{|\P||\V|}{q^k}\right\vert \le q^{dk/2}\sqrt{|\P||\V|}.\]
\end{corollary}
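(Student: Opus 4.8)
The plan is to deduce Corollary~\ref{dl1} directly from Theorem~\ref{dl2} by a lifting argument that embeds a configuration in $\F^d$ into one in $\F^{d+k}=\F^d\times\F^k$, to which the theorem applies verbatim. The key observation is that a point $\p\in\F^d$ lies on $W_{\mathbf{a}_1,\ldots,\mathbf{a}_k}$, i.e.\ satisfies $f_i(\p,\mathbf{a}_i)=0$ for all $1\le i\le k$, if and only if the lifted point $(\p,\mathbf 0)$ --- obtained by appending the $k$ zero coordinates $\mathbf 0=(0,\ldots,0)\in\F^k$ --- lies on the graph variety $V_{\mathbf{a}_1,\ldots,\mathbf{a}_k}$. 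Indeed, the defining equations $x_{d+i}=f_i(\mathbf{x},\mathbf{a}_i)$ of $V_{\mathbf{a}_1,\ldots,\mathbf{a}_k}$, evaluated at $(\p,\mathbf 0)$, read precisely $0=f_i(\p,\mathbf{a}_i)$, which is the defining condition for $W_{\mathbf{a}_1,\ldots,\mathbf{a}_k}$.

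First I would set $\P'\assign\{(\p,\mathbf 0):\p\in\P\}\subseteq\F^d\times\F^k$ and let $\V'$ be the collection of graph varieties $V_{\mathbf{a}_1,\ldots,\mathbf{a}_k}$ associated to the members $W_{\mathbf{a}_1,\ldots,\mathbf{a}_k}$ of $\V$ via the same parameter tuples. By the observation above, the map $\p\mapsto(\p,\mathbf 0)$ is a bijection between $\P$ and $\P'$ that preserves incidences, so $I(\P',\V')=I(\P,\V)$, while $|\P'|=|\P|$ and $|\V'|=|\V|$. Applying Theorem~\ref{dl2} to $\P'$ and $\V'$ then gives
\[\left|I(\P',\V')-\frac{|\P'||\V'|}{q^k}\right|\le q^{dk/2}\sqrt{|\P'||\V'|}.\]
Substituting the identities $I(\P',\V')=I(\P,\V)$, $|\P'|=|\P|$ and $|\V'|=|\V|$ yields exactly the claimed bound, since neither $d$ nor $k$ is altered by the lift and the exponents $q^{k}$ and $q^{dk/2}$ therefore coincide on both sides.

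There is essentially no analytic obstacle here: all the genuine work is carried out in Theorem~\ref{dl2}, and the corollary is a formal reduction to it. The one point requiring care is the bookkeeping of cardinalities, specifically the claim $|\V'|=|\V|$. I would handle this by regarding $\V$ as indexed by its defining parameter tuples $(\mathbf{a}_1,\ldots,\mathbf{a}_k)$, so that the assignment $W_{\mathbf{a}_1,\ldots,\mathbf{a}_k}\mapsto V_{\mathbf{a}_1,\ldots,\mathbf{a}_k}$ is a bijection onto $\V'$ by construction. This sidesteps the only subtlety, namely that two distinct tuples could a priori determine the same zero set $W$ in $\F^d$ while giving distinct graphs in $\F^{d+k}$; indexing by tuples makes the correspondence automatically injective and the size comparison exact.
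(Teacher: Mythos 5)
Your proposal is correct and is essentially the paper's own proof: the paper likewise lifts $\P$ to $\P'=\{p\times (0)^k : p\in\P\}$, observes that incidences between $\P$ and the varieties $W_{\mathbf{a}_1,\ldots,\mathbf{a}_k}$ coincide with incidences between $\P'$ and the graph varieties $V_{\mathbf{a}_1,\ldots,\mathbf{a}_k}$, and then applies Theorem~\ref{dl2}. Your additional care about $|\V'|=|\V|$ (choosing representative parameter tuples, with distinct tuples giving distinct graph varieties as in Lemma~\ref{bd1}) merely makes explicit a bookkeeping point that the paper leaves implicit.
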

Let us observe that if $h_i(\mathbf{x})\equiv 0$ and $\tmmathbf{b}_i=(1,\ldots,1)$ for all $1\le i\le k$, then a variety of the form $V_{\mathbf{a}_1,\ldots,\mathbf{a}_k}$ is a $k$-flat in the vector space $\F^{d+k}$. Therefore, we recover the bound established by Bennett et al. \cite{iom} on the number of incidences between points and flats:
\begin{corollary}[\textbf{Bennett et al.} \cite{iom}]\label{benben}
Let $\P$ be a set of points, and $\mathcal{F}$ a set of $k$-flats in $\F^{d+k}$. Then the number of incidences between $\P$ and $\mathcal{F}$ satisfies
\[\left\vert I(\P,\mathcal{F})-\frac{|\P||\mathcal{F}|}{q^k}\right\vert \le q^{dk/2}\sqrt{|\P||\mathcal{F}|}.\]
\end{corollary}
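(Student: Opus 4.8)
The plan is to obtain Corollary \ref{benben} as a direct specialization of Theorem \ref{dl2}, with no new estimate required. First I would set $h_i(\mathbf{x})\equiv 0$ and $\mathbf{b}_i=(1,\ldots,1)$ for every $1\le i\le k$, and check that this choice is admissible for Theorem \ref{dl2}: the polynomials $h_i$ have degree $0\le q-1$, the monomials $x_j^{b_{ij}}=x_j$ have degree $1\le q-1$, and the coprimality hypothesis reads $\gcd(b_{ij},q-1)=\gcd(1,q-1)=1$, which holds automatically. Hence the theorem is applicable to the resulting family of varieties.

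Next I would identify these varieties with flats. Under the above specialization the defining functions collapse to the affine-linear forms $f_i(\mathbf{x},\mathbf{a}_i)=\sum_{j=1}^d a_{ij}x_j+a_{i(d+1)}$, so that
\[ V_{\mathbf{a}_1,\ldots,\mathbf{a}_k}=\{(\mathbf{x},x_{d+1},\ldots,x_{d+k}) : x_{d+i}=f_i(\mathbf{x},\mathbf{a}_i),\ 1\le i\le k\} \]
is precisely the graph of the affine map $\mathbf{x}\mapsto(f_1(\mathbf{x},\mathbf{a}_1),\ldots,f_k(\mathbf{x},\mathbf{a}_k))$ from $\F^d$ to $\F^k$. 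This graph is an affine subspace of $\F^{d+k}$ containing exactly $q^d$ points; its point count $q^d$ is precisely what makes the main term $|\P||\V|/q^k$ of Theorem \ref{dl2} coincide with the expected number $|\P||\mathcal F|/q^k$ of incidences for the flats in $\mathcal F$. Feeding $\mathcal F$ into Theorem \ref{dl2} as the set $\V$ then yields the claimed inequality verbatim.

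The only real content, and the step I would be most careful about, is the dictionary between the parametrized varieties $V_{\mathbf{a}_1,\ldots,\mathbf{a}_k}$ and the flats considered by Bennett et al. in \cite{iom}. The parametrization $(\mathbf{a}_1,\ldots,\mathbf{a}_k)$ produces exactly the flats that project bijectively onto the first $d$ coordinates — the \emph{non-vertical} graphs — rather than all flats of the given dimension. To recover the bound of \cite{iom} cleanly I would argue that it suffices to treat this family; for instance, every flat of the relevant type can be brought into graph form after a suitable linear change of coordinates, under which both $|\P|$ and the incidence structure are preserved, and only afterwards invoke Theorem \ref{dl2}. I expect this identification of the two families — not the inequality itself — to be where all the care lies.
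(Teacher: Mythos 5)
Your argument is, at its core, the paper's own proof: Corollary \ref{benben} is derived there in a single sentence, by exactly the specialization you describe ($h_i\equiv 0$, $\mathbf{b}_i=(1,\ldots,1)$ in Theorem \ref{dl2}), and your verification of the degree and $\gcd$ hypotheses and of the matching main term is just a more careful writing of that sentence. You also put your finger on the one genuine subtlety, which the paper passes over in silence: the varieties $V_{\mathbf{a}_1,\ldots,\mathbf{a}_k}$ arising from this specialization are not all flats of codimension $k$ in $\F^{d+k}$ (these, with $q^d$ points each, are what the corollary must mean by ``$k$-flats'', since the main term $|\P||\mathcal{F}|/q^k$ forces that point count), but only those that are graphs over the first $d$ coordinates.

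The trouble is that the repair you sketch for this last point does not work. Theorem \ref{dl2} must be applied to the family $\mathcal{F}$ as a whole --- its proof is an expander mixing bound on one fixed bipartite graph whose variety side consists precisely of the graph-form flats --- so you would need a \emph{single} invertible linear map taking every flat of $\mathcal{F}$ simultaneously to graph form; equivalently, a $k$-dimensional subspace $W\subset\F^{d+k}$ meeting the direction space of every flat of $\mathcal{F}$ trivially. Such a $W$ need not exist: already for $d=k=1$, if $\L$ contains one line in each of the $q+1$ directions of $\F^2$, then every candidate vertical direction is parallel to some member of $\L$. Straightening the flats one at a time is not an option either, because the conclusion is not additive over a partition of $\mathcal{F}$: splitting $\mathcal{F}$ into the at most $\binom{d+k}{d}$ classes of flats that are graphs over a fixed coordinate $d$-subspace and summing the resulting bounds costs a factor of $\sqrt{\binom{d+k}{d}}$ by Cauchy--Schwarz, which the stated inequality does not tolerate. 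So, as written, both your argument and the paper's establish the corollary only for sets of flats in graph position; the statement for arbitrary flats requires a direct spectral computation on the incidence graph of \emph{all} codimension-$k$ flats (as in the original reference), not a specialization of Theorem \ref{dl2}. The gap you flagged is genuine, but it is the paper's gap as well, and a change of variables is not the way to close it.
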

It follows from Theorem \ref{dl2} and Theorem \ref{dl1} that if $|\P||\V|\ge 2q^{k(d+2)}$, then $\P$ and $\V$ determine at least one incidence. Also if $|\P||\V|\gg 2q^{k(d+2)}$, then the number of incidences is close to the expected value $|\P||\V|/q^k$.

There are some applications of Corollary \ref{benben} in combinatorial geometry problems, for instance, the number of congruent classes of triangles determined by a set of points in $\mathbb{F}_q^2$ in \cite{iom}, and the number of right angles determined by a point set in $\mathbb{F}_q^d$ in \cite{gabor}.

When $k=1$, varieties of the form $W_{\mathbf{a}_1,\ldots,\mathbf{a}_k}$ become hypersurfaces in $\mathbb{F}_q^d$, so they can be written as
\begin{equation}W_{\mathbf{a}}=V\left(h(\mathbf{x})+a_1x_1^{b_1}+\cdots+a_dx_d^{b_d}+a_{d+1}\right),~\mathbf{a}=(a_1,\ldots, a_d)\in \F^d.\end{equation}
Therefore, we obtain the following bound on the number of incidences between points and hypersurfaces:
\begin{theorem}\label{hello11}
Let $\P$ be a set of points in $\F^d$, and $\S$ a set of hypersurfaces of the form $W_{\mathbf{a}}$. Then the number of incidences between $\P$ and $\S$ satisfies
\[\left\vert I(\P,\S)-\frac{|\P||\S|}{q}\right\vert \le q^{d/2}\sqrt{|\P||\S|}.\]
\end{theorem}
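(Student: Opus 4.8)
The plan is to derive Theorem~\ref{hello11} as the special case $k=1$ of Theorem~\ref{dl2} (equivalently Corollary~\ref{dl1}), but since the hypersurfaces $W_{\mathbf a}$ live in $\F^d$ rather than $\F^{d+1}$, I first need to lift the problem to match the setting of Theorem~\ref{dl2}. When $k=1$ we have a single polynomial $f_1(\mathbf x,\mathbf a_1)=h(\mathbf x)+a_1x_1^{b_1}+\cdots+a_dx_d^{b_d}+a_{d+1}$, and the variety $W_{\mathbf a}=V(f_1(\mathbf x,\mathbf a_1))$ sits in $\F^d$. Plugging $k=1$ into the bound of Corollary~\ref{dl1} gives exactly $\left\vert I(\P,\S)-|\P||\S|/q\right\vert\le q^{d/2}\sqrt{|\P||\S|}$, which is the claimed inequality. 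So strictly speaking the theorem is an immediate specialization, and the only real content is to confirm that the hypothesis $\gcd(b_j,q-1)=1$ is exactly the condition under which the machinery applies.

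Rather than treating this as a black-box corollary, I would instead give a self-contained spectral-graph-theoretic proof, since that is the method underlying all the cited incidence bounds and it makes the role of the $\gcd$ condition transparent. First I would define a bipartite graph (or an associated regular graph) on the vertex set $\P\sqcup\S$, placing an edge whenever a point lies on a hypersurface; the number of incidences $I(\P,\S)$ is then the number of edges between the two parts. The key algebraic step is to show that the full incidence structure arises as an induced subgraph of a highly symmetric Cayley-type graph on $\F^d$, whose adjacency eigenvalues can be computed exactly. Here the map $x_j\mapsto x_j^{b_j}$ is a bijection on $\F_q$ precisely because $\gcd(b_j,q-1)=1$, so raising to the $b_j$-th power merely permutes the nonzero elements and fixes zero; this is what lets the "twisted" incidence relation $h(\mathbf x)+\sum a_jx_j^{b_j}+a_{d+1}=0$ be reduced, via a change of variables $y_j=x_j^{b_j}$, to the linear/affine relation whose spectral gap is already understood.

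The heart of the argument is the eigenvalue estimate. Using standard Fourier/character-sum analysis on $\F_q^d$, I would compute the eigenvalues of the adjacency operator of this Cayley graph and show that, apart from the trivial (Perron) eigenvalue governing the main term $|\P||\S|/q$, every nontrivial eigenvalue has absolute value at most $q^{d/2}$. Translating from the $y_j$ coordinates back to the $x_j$ coordinates via the bijection $x_j\mapsto x_j^{b_j}$ does not change the spectrum, because a permutation of coordinate values induces a permutation of the vertex set and hence only conjugates the adjacency matrix. Once the second eigenvalue bound $\lambda_2\le q^{d/2}$ is in hand, the expander mixing lemma applied to the vertex subsets $\P$ and $\S$ yields $\left\vert I(\P,\S)-|\P||\S|/q\right\vert\le \lambda_2\sqrt{|\P||\S|}\le q^{d/2}\sqrt{|\P||\S|}$, which is exactly the desired bound.

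I expect the main obstacle to be the eigenvalue computation itself, specifically verifying that the twisting by the fixed polynomial $h(\mathbf x)$ and the presence of the exponents $b_j$ do not degrade the spectral gap. The additive shift by $h(\mathbf x)$ is harmless since it can be absorbed into the free parameter $a_{d+1}$, but one must check carefully that the resulting character sum still collapses to the clean bound $q^{d/2}$ for every nontrivial additive character, rather than merely $O(q^{d/2})$ with an uncontrolled constant; this is where the exactness of the inequality in Theorem~\ref{hello11} (constant $1$, not a hidden $C$) rests. The $\gcd$ hypothesis is the linchpin that guarantees the power maps are bijections and hence that no cancellation is lost in this reduction, so the cleanest presentation is to isolate that fact as the first lemma and then run the spectral argument in the linearized coordinates.
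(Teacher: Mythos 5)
Your opening paragraph---deriving Theorem \ref{hello11} as the case $k=1$ of Corollary \ref{dl1}---is correct and is in fact \emph{exactly} the paper's proof: the paper introduces Theorem \ref{hello11} as nothing more than this specialization, with Corollary \ref{dl1} itself obtained from Theorem \ref{dl2} by embedding $\P\subset\F^d$ into $\F^{d+1}$ via $p\mapsto(p,0)$. Had you stopped there, the proof would be complete. The self-contained spectral argument you then substitute for it, however, has a genuine gap. You propose to apply the expander mixing lemma (Lemma \ref{expander}) to the bipartite graph whose parts are the points of $\F^d$ and the hypersurfaces $W_{\mathbf{a}}$. That lemma requires \emph{biregularity}, and this graph is regular only on the point side: each $\mathbf{x}\in\F^d$ lies on exactly $q^d$ of the tuples $\mathbf{a}\in\F^{d+1}$ (choose $a_1,\ldots,a_d$ freely, then $a_{d+1}$ is forced), but the degrees on the other side are the cardinalities $|W_{\mathbf{a}}|$, which are far from constant: with $h\equiv 0$ and $b_j=1$ the tuple $(0,\ldots,0,1)$ gives the empty set while $(0,\ldots,0,0)$ gives all of $\F^d$, and once $h\not\equiv 0$ even the nondegenerate $W_{\mathbf{a}}$ vary in size (for $h=x_1x_2$ in the plane, $|W_{\mathbf{a}}|$ is $q-1$ or $2q-1$ depending on $\mathbf{a}$). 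With one-sided regularity the spectral argument still runs, but its main term comes out as $\frac{|\P|}{q^d}\sum_{W\in\S}|W|$ rather than $\frac{|\P||\S|}{q}$; converting one into the other requires a concentration estimate for $\sum_{W\in\S}|W|$ around $q^{d-1}|\S|$ which the eigenvalue bound plus mixing lemma do not directly supply, and in any case the clean constant $1$ you emphasize would degrade. Relatedly, your claim that the twist $h(\mathbf{x})$ ``can be absorbed into the free parameter $a_{d+1}$'' is invalid in $\F^d$: the required shift is $-h(\mathbf{x})$, which depends on the point, so no single reparameterization of the hypersurface side removes it. Absorption becomes legitimate only after adjoining an extra coordinate, where the shear $(\mathbf{x},x_{d+1})\mapsto(\mathbf{x},x_{d+1}-h(\mathbf{x}))$ untwists all varieties simultaneously---and this is precisely the point of the paper's lift: upstairs, every $V_{\mathbf{a}}$ is the graph of a function, hence has exactly $q^d$ points, both sides are regular, and Lemma \ref{expander} applies. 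The lift is not bookkeeping; it is the regularization device your sketch is missing.

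If you do want a self-contained proof that never leaves $\F^d$, the repair is to bypass the mixing lemma and bound the \emph{centered} incidence matrix. Let $N$ be the $q^d\times q^{d+1}$ point--tuple incidence matrix and $J$ the all-ones matrix of the same shape. The hypothesis $\gcd(b_j,q-1)=1$ makes $\mathbf{x}\mapsto(x_1^{b_1},\ldots,x_d^{b_d})$ injective, so two distinct points impose linearly independent affine conditions on $\mathbf{a}$, and hence have exactly $q^{d-1}$ common neighbors \emph{regardless of} $h$; together with the row sums $q^d$ this gives $NN^T=q^{d-1}J_A+(q^d-q^{d-1})I$, where $J_A$ is the $q^d\times q^d$ all-ones matrix, and a two-line computation then shows
\[\left(N-\tfrac{1}{q}J\right)\left(N-\tfrac{1}{q}J\right)^T=(q^d-q^{d-1})I,\]
so every singular value of $N-\frac{1}{q}J$ equals $\sqrt{q^d-q^{d-1}}\le q^{d/2}$. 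Cauchy--Schwarz now gives
\[\left\vert I(\P,\S)-\frac{|\P||\S|}{q}\right\vert=\left\vert \mathbf{1}_{\P}^T\left(N-\tfrac{1}{q}J\right)\mathbf{1}_{\S}\right\vert\le q^{d/2}\sqrt{|\P||\S|},\]
which is the theorem, irregular hypersurface sizes notwithstanding. This is the correct replacement for the mixing-lemma step in your plan; otherwise, simply quote Corollary \ref{dl1} as in your first paragraph and be done.
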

When $h(\mathbf{x})=x_1^2+\cdots+x_d^2$, $\tmmathbf{a} = (1,\ldots,1)$ and $b_1 = \ldots =b_d = 2$, as a consequence of Theorem \ref{hello11}, we recover the bound on the number of incidences between points and spheres obtained in \cite{iosevichetal, thang}.
\begin{corollary}[\textbf{Cilleruelo et al.} \cite{iosevichetal}]\label{hello12}
Let $\P$ be a set of points, and $\S$ a set of spheres with arbitrary radii in $\F^d$. Then the number of incidences between $\P$ and $\S$ satisfies
\[\left\vert I(\P,\S)-\frac{|\P||\S|}{q}\right\vert \le q^{d/2}\sqrt{|\P||\S|}.\]
\end{corollary}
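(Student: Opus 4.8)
The plan is to realize every sphere in $\F^d$ as a hypersurface of the form $W_{\mathbf a}$ and then quote Theorem~\ref{hello11} directly. A sphere with center $\mathbf c=(c_1,\dots,c_d)\in\F^d$ and radius $r\in\F$ is the point set $\{\mathbf x\in\F^d\colon (x_1-c_1)^2+\cdots+(x_d-c_d)^2=r\}$, and expanding its defining equation gives
\[
x_1^2+\cdots+x_d^2-2c_1x_1-\cdots-2c_dx_d+\big(c_1^2+\cdots+c_d^2-r\big)=0 .
\]
I would read off from this that the sphere equals $W_{\mathbf a}$ once the fixed polynomial is taken to be $h(\mathbf x)=x_1^2+\cdots+x_d^2$, the exponents are $b_1=\cdots=b_d=1$, and the coefficients are $a_i=-2c_i$ for $1\le i\le d$ together with $a_{d+1}=c_1^2+\cdots+c_d^2-r$.

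Before invoking Theorem~\ref{hello11} I would verify the two standing hypotheses, both of which are immediate: the fixed polynomial $h$ has degree $2\le q-1$, and the exponents satisfy $\gcd(b_i,q-1)=\gcd(1,q-1)=1$. I would also check that the assignment $(\mathbf c,r)\mapsto\mathbf a$ is a bijection, so that a family $\S$ of spheres is faithfully identified with a family of hypersurfaces of the required type having the same cardinality. This uses that $q$ is odd, hence $2$ is invertible in $\F$: one recovers $c_i=-a_i/2$ and $r=c_1^2+\cdots+c_d^2-a_{d+1}$ from $\mathbf a$, giving a two-sided inverse between $\F^d\times\F$ and $\F^{d+1}$.

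With this identification, Theorem~\ref{hello11} applies verbatim to $\P$ and $\S$ and delivers
\[
\Big\vert I(\P,\S)-\frac{|\P||\S|}{q}\Big\vert\le q^{d/2}\sqrt{|\P||\S|},
\]
which is exactly the claim. The only delicate point --- and it is the one I would flag --- is the placement of the quadratic part: since $q$ is odd we have $\gcd(2,q-1)=2\neq1$, so the squares cannot be carried by the variable monomials $x_i^{b_i}$ and must instead be absorbed into the fixed polynomial $h$, with the center and radius entering only through the linear coefficients $a_i$ and the constant $a_{d+1}$. Once this is arranged there is no genuine obstacle, since the statement is a direct reading of Theorem~\ref{hello11}.
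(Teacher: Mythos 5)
Your proposal is correct and takes essentially the same route as the paper: both proofs expand the sphere equation, set $h(\mathbf{x})=x_1^2+\cdots+x_d^2$ and $b_1=\cdots=b_d=1$ so that each sphere becomes a hypersurface $W_{\mathbf{a}}$, and then invoke Theorem \ref{hello11}. If anything, your write-up is more careful than the paper's, which drops the factor $-2$ in the linear coefficients and does not spell out the bijection between pairs (center, radius) and parameter vectors $\mathbf{a}$, nor the observation that the quadratic part must be absorbed into $h$ because $\gcd(2,q-1)\neq 1$.
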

Theorem \ref{hello12} also has various applications in several combinatorial problems over finite fields, for instance, Erd\H{o}s distinct distance problem, the Beck type theorem for points and circles, and subset without repeated distance, see \cite{iosevichetal, thang} for more details.
\subsection{Pinned values and Distinct radii}
\paragraph{Pinned values problem:} The distance function between two points $\mathbf{x}$ and $\mathbf{y}$ in $\F^d$, denoted by $||\mathbf{x}-\mathbf{y}||$, is defined as $||\mathbf{x}-\mathbf{y}||=(x_1-y_1)^2+\cdots+(x_d-y_d)^2$.
Although it is not a norm, the function $||\mathbf{x}-\mathbf{y}||$ has properties similar to the Euclidean norm (for example, it is invariant under orthogonal matrices).

Bourgain, Katz, and Tao \cite{bourgain-katz-tao} were the first to consider the the finite analogue of the classical Erd\H{o}s distinct distance problem, namely to determine the smallest possible cardinality of the set
$\Delta_{\FF} (\E)=\{||\mathbf{x}-\mathbf{y}||=(x_1-y_1)^2+\cdots+(x_d-y_d)^2\colon \mathbf{x}, \mathbf{y}\in \E\}\subset \mathbb{F}_q,$
where $\E\subset\mathbb{F}_q^d$. More precisely, they proved that $|\Delta_{\FF} (\E)|\gtrsim |\E|^{1/2+\epsilon}$, where $|\E|=q^\alpha$ and $\epsilon>0$ is a small constant depending on $\alpha$.

Iosevich and Rudnev \cite{iosevich} studied the following question: how large does $\E\subset\mathbb{F}_q^d$, $d\ge 2$ have to be, so that $\Delta_{\mathbb{F}_q}(\E)$ contains a positive proportion of the elements of $\mathbb{F}_q$. They proved that if $\E\subset \mathbb{F}_q^d$ such that $|\E|\gtrsim Cq^{d/2}$ for sufficiently large $C$, then $|\Delta_{\FF}(\E)|=\Omega\left(\min\left\lbrace q, q^{-(d-1)/2}|\E|\right\rbrace\right)$ (in other words, for any sufficiently large $\E\subseteq\mathbb{F}_q^d$, the set $\Delta_{\mathbb{F}_q}(\E)$ contains a positive proportion of the elements of $\mathbb{F}_q$). From this, one obtains that that if $|\E| \gtrsim q^{(d+1)/2}$, then $|\Delta_{\FF}(\E)|\gtrsim q$. This is in fact directly related to Falconer's result \cite{fa} in Euclidean space, saying that for every set $\E$ with Hausdorff dimension greater that $(d+1)/2$, the distance set is of positive measure.

Hart et al. \cite{hartetal} proved that the exponent $(d+1)/2$ is the best possible in odd dimensions, although in even dimensions, it might still be place for improvement.  Chapman et al. \cite{chapman1} showed that if a set $\E\subseteq \mathbb{F}_q^2$ satisfies $|\E|\ge q^{4/3}$, then $|\Delta_{\FF}(\E)|$ contains a positive proportion of the elements of $\mathbb{F}_q$. In the same paper it was also proved that for any set $\P$ of points in $\mathbb{F}_q^d$ with $|\P|\ge q^{(d+1)/2}$, there exists a subset $\P'$ in $\P$, such that $|\P'|=(1-o(1))|\P|$, and for any $\mathbf{y}\in \P'$, $|\Delta_{\mathbb{F}_q}(\P, \mathbf{y})|\gtrsim q$, where $\Delta_{\F}(\P, \mathbf{y})=\{||\tmmathbf{x}-\tmmathbf{y}||\colon \tmmathbf{x}\in \P\}$. (which is the pinned distance problem)

Let $Q(\mathbf{x})$ be a non-degenerate quadratic form. For a fixed non-square element $\lambda\in \F\setminus\{0\}$,  the quadraic form $Q(\mathbf{x})$ can be written as
 \[Q(\mathbf{x})=x_1^2-x_2^2+x_3^2-x_4^2+\cdots+x_{2m-1}^2-\epsilon x_{2m}^2,~~\text{if}~~ d=2m,\] and
\[Q(\mathbf{x})=x_1^2-x_2^2+x_3^2-x_4^2+\cdots+x_{2m-1}^2-x_{2m}^2+\epsilon x_{2m+1}^2,~~\text{if}~~d=2m+1,\]
 where $\epsilon \in \left\lbrace 1, \lambda \right\rbrace$, see \cite{integral} for more details.

Given a point $\tmmathbf{q}\in \F^d$ and a set of points $\P\subseteq \F^d$, we define the pinned distance set determined by $Q(\mathbf{x})$ and $\q$ as $\Delta_Q(\P,\tmmathbf{q})=\{Q(\tmmathbf{p}-\tmmathbf{q})\colon \tmmathbf{p}\in \P\}$. Using methods from spectral graph theory, the fifth listed author obtained the following:
\begin{theorem}[{\bf Vinh} \cite{vinh-norm}]\label{pinchap}
Let $\P$ be a set of points in $\mathbb{F}_q^d$ such that $|\P|\ge
q^{(d+1)/2}$, then there exists a subset $S \subset \P$ such that $|S|=(1-o(1))|\P|$, and for any $\tmmathbf{y}\in S$, we have  $|\Delta_Q(\P, \tmmathbf{y})|\gtrsim q$.
\end{theorem}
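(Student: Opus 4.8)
The plan is to reduce the statement to a second-moment estimate and then feed that estimate into the point--hypersurface incidence bound of Theorem \ref{hello11}. For $\mathbf{y}\in\F^d$ and $r\in\F$ write $\nu(\mathbf{y},r):=|\{\mathbf{p}\in\P : Q(\mathbf{p}-\mathbf{y})=r\}|$ for the number of points of $\P$ on the $Q$-sphere $S_{\mathbf{y},r}:=\{\mathbf{x} : Q(\mathbf{x}-\mathbf{y})=r\}$. Since $\sum_r\nu(\mathbf{y},r)=|\P|$ and the number of nonzero summands is exactly $|\Delta_Q(\P,\mathbf{y})|$, the Cauchy--Schwarz inequality gives $|\Delta_Q(\P,\mathbf{y})|\ge |\P|^2/A(\mathbf{y})$, where $A(\mathbf{y}):=\sum_r\nu(\mathbf{y},r)^2$ counts the pairs $(\mathbf{p}_1,\mathbf{p}_2)\in\P^2$ that are $Q$-equidistant from $\mathbf{y}$. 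Thus it suffices to show that $A(\mathbf{y})\le (1+o(1))|\P|^2/q$ for all but $o(|\P|)$ of the points $\mathbf{y}\in\P$: every such $\mathbf{y}$ then satisfies $|\Delta_Q(\P,\mathbf{y})|\gtrsim q$, and collecting these $\mathbf{y}$ produces the desired set $S$ with $|S|=(1-o(1))|\P|$.

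Because $A(\mathbf{y})\ge |\P|^2/q$ holds for every $\mathbf{y}$ (there are at most $q$ possible radii), the reduction will follow from an averaged bound. First I would set $T:=\sum_{\mathbf{y}\in\P}A(\mathbf{y})$, which is the number of triples $(\mathbf{y},\mathbf{p}_1,\mathbf{p}_2)\in\P^3$ with $Q(\mathbf{p}_1-\mathbf{y})=Q(\mathbf{p}_2-\mathbf{y})$, i.e. the number of (degenerate and nondegenerate) isosceles triangles spanned by $\P$, and try to prove
\[
T\le \frac{|\P|^3}{q}+O\!\left(q^{(d-1)/2}|\P|^2\right).
\]
Granting this, the excess $E:=T-|\P|^3/q$ over the trivial lower bound $\sum_{\mathbf{y}}|\P|^2/q=|\P|^3/q$ is $O(q^{(d-1)/2}|\P|^2)$, which is $o(|\P|^3/q)$ precisely when $|\P|\gg q^{(d+1)/2}$; a Markov-type argument then bounds the number of $\mathbf{y}$ with $A(\mathbf{y})>(1+\eta)|\P|^2/q$ by $Eq/(\eta|\P|^2)=o(|\P|)$, which is exactly the reduction of the previous paragraph.

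The heart of the argument is the bound on $T$, and here I would first record that the $Q$-spheres are admissible hypersurfaces. Using $Q(\mathbf{x}-\mathbf{y})=Q(\mathbf{x})-2B(\mathbf{x},\mathbf{y})+Q(\mathbf{y})$, where $B$ is the associated bilinear form, the equation $Q(\mathbf{x}-\mathbf{y})=r$ becomes $Q(\mathbf{x})+a_1x_1+\cdots+a_dx_d+a_{d+1}=0$ with linear coefficients $a_j$ and constant $a_{d+1}$ determined by $\mathbf{y}$ and $r$; this is exactly a variety $W_{\mathbf{a}}$ with $h=Q$ and all exponents $b_j=1$ (so $\gcd(b_j,q-1)=1$). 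Consequently Theorem \ref{hello11} applies to $\P$ and any family of $Q$-spheres with error term $q^{d/2}\sqrt{|\P||\S|}$. I would then view $T=\sum_{S}\nu(S)\,|\P\cap S|$ as a weighted incidence count over the set $\S^{*}$ of all $q|\P|$ spheres centered at points of $\P$, the sphere $S$ carrying weight $\nu(S)=|\P\cap S|$; the eigenvalue estimate underlying Theorem \ref{hello11} (in its weighted form) yields $|T-|\P|^3/q|\le q^{d/2}|\P|^{1/2}T^{1/2}$, and solving this quadratic inequality in $T^{1/2}$ gives the displayed bound once $|\P|\ge q^{(d+1)/2}$.

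The step I expect to be the main obstacle is precisely this passage from the incidence bound to the estimate for $T$: Theorem \ref{hello11} as stated controls the first moment $\sum_{S\in\S}|\P\cap S|$ for a set of spheres, whereas $T$ is the second moment $\sum_S |\P\cap S|^2$. Three routes seem available. The weighted (eigenvalue) form sketched above avoids any logarithmic loss but requires reopening the proof of Theorem \ref{hello11} rather than using it as a black box. Alternatively, a dyadic decomposition of $\S^{*}$ according to the size of $\nu(S)$, applying the set-version bound on each level, uses Theorem \ref{hello11} verbatim at the cost of a $\log q$ factor, which is harmless for the conclusion $|\Delta_Q(\P,\mathbf{y})|\gtrsim q$. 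Finally one may recast $T$ through the perpendicular bisectors $H_{\mathbf{p}_1\mathbf{p}_2}=\{\mathbf{y}:Q(\mathbf{p}_1-\mathbf{y})=Q(\mathbf{p}_2-\mathbf{y})\}$, which are hyperplanes, and invoke the point--hyperplane case of Corollary \ref{benben}; there the difficulty reappears as the need to control how many point-pairs share a common bisector, which is at most $|\P|$ per hyperplane via the reflection involution fixing $H$. In all three approaches the numerology is identical, and the threshold $|\P|\gg q^{(d+1)/2}$ emerges exactly from requiring the error $q^{(d-1)/2}|\P|^2$ to be negligible against the main term $|\P|^3/q$.
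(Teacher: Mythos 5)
First, a point of orientation: the paper never proves Theorem \ref{pinchap} itself --- it is quoted from Vinh \cite{vinh-norm} --- but it does prove the generalization Theorem \ref{hello2}, whose proof specializes to quadratic forms, so that is the natural benchmark. That proof is a \emph{first-moment} argument: one forms the point set $\P'=\{(\p,t): \p\in\P,\ t\in\Delta_F(\P,\p)\}\subset\mathbb{F}_q^{d+1}$ and the hypersurfaces $\S=\{x_{d+1}=F(\mathbf{x},\q):\q\in\P\}$, observes that $e(\P',\S)=|\P|^2$ exactly (each pair $(\p,\q)$ contributes one incidence), and plays this identity off against the upper bound of Theorem \ref{dl2} with $k=1$, which involves $|\P'|=\sum_{\p}|\Delta_F(\P,\p)|$; a pigeonhole argument finishes. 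Your plan is instead the second-moment route (Cauchy--Schwarz plus a bound on the isosceles-triple count $T$), in the style of Chapman et al. That is a genuinely different argument, and you have correctly located its crux: $T=\sum_S|\P\cap S|^2$ is a second moment, which none of the paper's incidence theorems controls as a black box. The paper's identity $e(\P',\S)=|\P|^2$ is exactly the device that makes this issue disappear.

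The difficulty you flag, however, is not resolved by your proposal, and two of your three suggested fixes fail. Route (c) is false over finite fields: the reflection fixing a bisector hyperplane exists only when its normal direction $\mathbf{w}$ (taken with respect to the bilinear form $B$ of $Q$) satisfies $Q(\mathbf{w})\neq 0$, and isotropic directions exist for every nondegenerate $Q$ once $d\geq 3$ (and for $d=2$ when $-1$ is a square). Concretely, take $d=3$ and $\P=\{\mathbf{y}: B(\mathbf{w},\mathbf{y})=0\}$ with $Q(\mathbf{w})=0$, so $|\P|=q^{2}=q^{(d+1)/2}$: every ordered pair of points of $\P$ differing by a nonzero multiple of $\mathbf{w}$ --- roughly $q^{3}=q|\P|$ pairs --- has the \emph{same} bisector, namely $\P$ itself, so the multiplicity bound $m(H)\leq|\P|$ fails by a factor of $q$. (The same example has $A(\mathbf{y})=(2-o(1))|\P|^2/q$ for every $\mathbf{y}\in\P$, hence $T=(2-o(1))|\P|^3/q$; so the target in your opening paragraph, $A(\mathbf{y})\leq(1+o(1))|\P|^2/q$ for most $\mathbf{y}$, is simply unattainable at the threshold $|\P|=q^{(d+1)/2}$.) Route (b)'s logarithmic loss is not harmless: at $|\P|=q^{(d+1)/2}$ the error $q^{d}|\P|\log q$ it produces exceeds the main term $|\P|^3/q$ by a factor $\log q$, so the Markov step yields nothing; you would need $|\P|\gtrsim q^{(d+1)/2}\sqrt{\log q}$. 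Route (a) does work: the weighted mixing inequality $|f^{T}Mg-\tfrac{1}{q}\|f\|_1\|g\|_1|\leq \lambda_3\|f\|_2\|g\|_2$ applied with $f=\mathbf{1}_{\P}$, $g=\nu$, and $\lambda_3\leq q^{d/2}$ (Lemma \ref{eigen1} with $k=1$, via the embedding used for Corollary \ref{dl1}) gives precisely your inequality $|T-|\P|^3/q|\leq q^{d/2}|\P|^{1/2}T^{1/2}$, hence $T\leq |\P|^3/q+O(q^{(d-1)/2}|\P|^2)$ once $|\P|\geq q^{(d+1)/2}$. But this is the one route you leave entirely as a sketch, it requires reproving Lemma \ref{expander} in weighted form rather than citing anything in the paper, and even granting it your Markov step leaves an exceptional set of size $O(q^{(d+1)/2})$, so the stated conclusion is reached only when $|\P|\gg q^{(d+1)/2}$ (a weakening which, to be fair, is also implicit in the paper's own quantitative Theorem \ref{hello2}, whose constant $\sqrt{1-c^2}/c^2$ blows up as $c\to 0$). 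As written, then, the proposal has a genuine gap: the key estimate on $T$ is not established, and the only viable path to it among the three you list is the unexecuted one.
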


In our paper, as an application of Theorem \ref{dl2} and using a similar approach to the one in \cite{iosevichetal}, we generalize Theorem \ref{pinchap} to \textit{non-degenerate polynomials}.
If $F(\mathbf{x},\mathbf{y})$ is a polynomial in $\mathbb{F}_q[x_1,\ldots,x_d, y_1,\ldots,y_d]$, we say that $F(\mathbf{x}, \mathbf{y})$ is \textit{non-degenerate} if $F(\mathbf{x}, \mathbf{y})$ can be written as
\[F(\mathbf{x},\mathbf{y}):=g(\mathbf{x},\mathbf{y})+(x_1^{b_1},\ldots,x_d^{b_d})M(y_1^{c_1},\ldots,y_d^{c_d})^T,\]
 where $g(\mathbf{x}, \mathbf{y})=g_1(\mathbf{x})+g_2(\mathbf{y})\in \mathbb{F}_q[x_1,\ldots,x_d, y_1,\ldots,y_d]$, $M$ is a $d\times d$ invertible matrix, and $\gcd(c_i,q-1)=1$ for all $1\le i \le d$.
\begin{theorem}\label{hello2}
Let $F(\mathbf{x}, \mathbf{y})$ be a non-degenerate polynomial and $\P$ be a set of points in $\mathbb{F}_q^d$ such that $|\P|\ge
(\sqrt{1-c^2}/c^2)q^{(d+1)/2}$ for some constant $0<c<1$. Then there is $\P' \subset \P$ such that $|\P'|\ge (1-c)|\P|$, and for any $\tmmathbf{y}\in \P'$, $|\Delta_F(\P, \tmmathbf{y})|\ge (1-c) q$, where $\Delta_F(\P,\tmmathbf{q})=\lbrace F(\tmmathbf{p},\tmmathbf{q})\colon \tmmathbf{p}\in \P \rbrace.$
\end{theorem}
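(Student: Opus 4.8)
The plan is to follow the distance-set strategy of Cilleruelo et al.\ \cite{iosevichetal}: turn the desired lower bound on $|\Delta_F(\P,\mathbf{y})|$ into an upper bound on a fibre-count, estimate that count on average over the pins through the second-moment form of Theorem~\ref{dl2}, and then throw away the few pins for which it is too large. First I would fix a pin $\mathbf{y}$ and set $\nu_{\mathbf{y}}(t)=|\{\mathbf{p}\in\P:F(\mathbf{p},\mathbf{y})=t\}|$, so that $\sum_t\nu_{\mathbf{y}}(t)=|\P|$ and the support of $\nu_{\mathbf{y}}$ is exactly $\Delta_F(\P,\mathbf{y})$. Cauchy--Schwarz over the support gives
\[|\P|^2=\Big(\sum_{t\in\Delta_F(\P,\mathbf{y})}\nu_{\mathbf{y}}(t)\Big)^2\le|\Delta_F(\P,\mathbf{y})|\cdot N(\mathbf{y}),\qquad N(\mathbf{y}):=\sum_t\nu_{\mathbf{y}}(t)^2,\]
so it suffices to prove $N(\mathbf{y})\le|\P|^2/((1-c)q)$ for all but at most $c|\P|$ of the pins.

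The key algebraic observation is that $N(\mathbf{y})$ counts pairs $(\mathbf{p},\mathbf{p}')\in\P^2$ with $F(\mathbf{p},\mathbf{y})=F(\mathbf{p}',\mathbf{y})$, and by non-degeneracy the term $g_2(\mathbf{y})$ cancels, so this equation reads
\[g_1(\mathbf{p})-g_1(\mathbf{p}')+\big(\mathbf{p}^{\mathbf{b}}-\mathbf{p}'^{\mathbf{b}}\big)M\,(\mathbf{y}^{\mathbf{c}})^{T}=0,\]
which, for fixed $(\mathbf{p},\mathbf{p}')$, is a hypersurface of the form $W_{\mathbf{a}}$ in the pin variable $\mathbf{y}$, with exponents $c_1,\dots,c_d$ satisfying $\gcd(c_i,q-1)=1$; the invertibility of $M$ ensures the linear part is nonzero as soon as $\mathbf{p}^{\mathbf{b}}\neq\mathbf{p}'^{\mathbf{b}}$. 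Thus these level sets are exactly of the admissible type for Theorem~\ref{dl2}, and I would pass to its second-moment (expander-mixing) form: writing the deviation $q^{d/2}\sqrt{|\P||\V|}$ as coming from a second eigenvalue $q^{d/2}$, one obtains
\[\sum_{\mathbf{y}\in\P}\Big(N(\mathbf{y})-\frac{|\P|^2}{q}\Big)=\sum_{\mathbf{y}\in\P}\sum_t\Big(\nu_{\mathbf{y}}(t)-\frac{|\P|}{q}\Big)^2\le q^{d}\,|\P|.\]
The diagonal pairs $\mathbf{p}=\mathbf{p}'$ and the collision pairs with $\mathbf{p}^{\mathbf{b}}=\mathbf{p}'^{\mathbf{b}}$ (bounded in number by $\prod_i\gcd(b_i,q-1)\cdot|\P|$) contribute only lower-order terms, which I would absorb into the constants.

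Finally I would run the averaging. Since $N(\mathbf{y})\ge|\P|^2/q$ always, a pin with $|\Delta_F(\P,\mathbf{y})|<(1-c)q$ satisfies $N(\mathbf{y})\ge|\P|^2/|\Delta_F(\P,\mathbf{y})|>|\P|^2/((1-c)q)$, so its excess exceeds $\tfrac{c}{1-c}\cdot|\P|^2/q$. Comparing with the displayed bound, the number of such bad pins is at most
\[\frac{1-c}{c}\cdot\frac{q}{|\P|^2}\cdot q^{d}|\P|=\frac{1-c}{c}\cdot\frac{q^{d+1}}{|\P|},\]
which is $\le c|\P|$ whenever $|\P|^2\ge\frac{1-c}{c^2}\,q^{d+1}$; this is implied by the hypothesis $|\P|\ge(\sqrt{1-c^2}/c^2)q^{(d+1)/2}$ (indeed $\tfrac{1-c^2}{c^4}\ge\tfrac{1-c}{c^2}$ since $1+c\ge c^2$), leaving room for the lower-order corrections. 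Discarding the bad pins yields $\P'$ with $|\P'|\ge(1-c)|\P|$, and each surviving pin has $|\Delta_F(\P,\mathbf{y})|\ge|\P|^2/N(\mathbf{y})\ge(1-c)q$, as required.

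The main obstacle is the second-moment estimate in the middle paragraph. Theorem~\ref{dl2} is stated as an $\ell^1$ deviation bound for a fixed family of varieties, whereas here I need the $\ell^2$ version summed over the level sets, and naively re-reading $\sum_{\mathbf{y}}N(\mathbf{y})$ as an incidence count against the $|\P|^2$ hypersurfaces $W_{\mathbf{p},\mathbf{p}'}$ only controls the $\ell^1$ deviation and is sensitive to how many pairs $(\mathbf{p},\mathbf{p}')$ determine the same hypersurface (essentially the additive energy of $\{\mathbf{p}^{\mathbf{b}}\}$). It is precisely the spectral input behind Theorem~\ref{dl2}, rather than the incidence count itself, that delivers the clean bound $q^{d}|\P|$ and hence the exponent $(d+1)/2$; verifying that the level-set family is regular enough for this estimate, using $\gcd(c_i,q-1)=1$ and the invertibility of $M$, is the step I expect to require the most care.
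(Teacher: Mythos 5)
Your skeleton (Cauchy--Schwarz per pin, a second-moment bound summed over pins, then discarding bad pins, with the observation $\tfrac{1-c^2}{c^4}\ge\tfrac{1-c}{c^2}$ closing the constants) is genuinely different from the paper's argument, and the parts you wrote out are correct. The paper never introduces the energy $N(\mathbf{y})$: it forms the lifted point set $\P'=\{(\p,t)\colon \p\in\P,\ t\in\Delta_F(\P,\p)\}$ and the family of graphs $\S=\{x_{d+1}=F(\mathbf{x},\q)\colon \q\in\P\}$, notes that every pair in $\P\times\P$ produces an incidence, so $e(\P',\S)=|\P|^2$, and then applies Theorem \ref{dl2} with $k=1$ to get
\[|\P|^2\le \frac{|\P|\sum_{\p\in\P}|\Delta_F(\P,\p)|}{q}+q^{d/2}\sqrt{|\P|\sum_{\p\in\P}|\Delta_F(\P,\p)|},\]
which under the size hypothesis forces $\sum_{\p\in\P}|\Delta_F(\P,\p)|>(1-c^2)q|\P|$; the same pigeonhole you use then finishes. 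So the paper's route is purely first-moment and uses Theorem \ref{dl2} as a black box, whereas yours is a second-moment (energy) argument.

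The genuine gap is the one you flag yourself: the displayed estimate $\sum_{\mathbf{y}\in\P}\bigl(N(\mathbf{y})-|\P|^2/q\bigr)\le q^{d}|\P|$ is asserted, not proved, and your proposed route to it (re-running the spectral machinery behind Theorem \ref{dl2} on the level-set family) is misdirected. In fact no spectral input is needed. Since each summand $\sum_t(\nu_{\mathbf{y}}(t)-|\P|/q)^2$ is non-negative, extend the sum from $\mathbf{y}\in\P$ to all $\mathbf{y}\in\F^d$. Then $\sum_{\mathbf{y}\in\F^d}\sum_t\nu_{\mathbf{y}}(t)^2$ counts triples $(\p,\p',\mathbf{y})$ with $F(\p,\mathbf{y})=F(\p',\mathbf{y})$, and for each fixed pair with $\p^{\mathbf{b}}\ne(\p')^{\mathbf{b}}$ the number of admissible $\mathbf{y}$ is \emph{exactly} $q^{d-1}$: the substitution $\mathbf{z}=(y_1^{c_1},\ldots,y_d^{c_d})$ is a bijection of $\F^d$ (this is where $\gcd(c_i,q-1)=1$ enters) and the equation becomes a nontrivial affine equation in $\mathbf{z}$ (this is where invertibility of $M$ enters). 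This also dissolves your multiplicity worry, because one counts pairs rather than distinct hypersurfaces, so no incidence theorem is invoked at all. One caveat you underestimate: the collision pairs $\p\ne\p'$ with $\p^{\mathbf{b}}=(\p')^{\mathbf{b}}$ and $g_1(\p)=g_1(\p')$ contribute $q^{d}$ each to the extended sum, i.e.\ up to $\bigl(\prod_i\gcd(b_i,q-1)-1\bigr)|\P|q^{d}$ in total, which is the \emph{same} order as your error term $q^{d}|\P|$, not lower order. Hence your clean constants (and the theorem's threshold $(\sqrt{1-c^2}/c^2)q^{(d+1)/2}$) are recovered verbatim only when $\gcd(b_i,q-1)=1$ for all $i$; otherwise the threshold must be inflated by a constant depending on $\prod_i\gcd(b_i,q-1)$. (The paper is silently in a similar position: its application of Theorem \ref{dl2} to $\S$ requires the exponents $b_i$ to satisfy the gcd hypothesis in the setup of that theorem, which the stated definition of non-degeneracy does not impose.)
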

\begin{corollary}\label{bodeds}
Let $F(\mathbf{x}, \mathbf{y})$ be a non-degenerate polynomial and $\P$, $\Q$ be sets of points in $\mathbb{F}_q^d$ such that $|\P||\Q|\ge 2\sqrt{3}q^{d+1}$ for some constant $0<c<1$. Then there is $\P' \subset \P$ such that $|\P'|\ge |\P|/2$, and for any $\tmmathbf{y}\in \P'$, $|\Delta_F(\Q, \tmmathbf{y})|\ge  q/2$, where $\Delta_F(\Q,\tmmathbf{q})=\lbrace F(\tmmathbf{p},\tmmathbf{q})\colon \tmmathbf{p}\in \Q \rbrace.$
\end{corollary}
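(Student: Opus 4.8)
The plan is to run the bipartite form of the second-moment argument behind Theorem \ref{hello2}, with pins $\mathbf{y}$ taken from $\P$ and $F$-values counted over points of $\Q$ (essentially the case $c=1/2$ there). For $\mathbf{y}\in\P$ and $t\in\F$ set $\nu_t(\mathbf{y}):=|\{\mathbf{p}\in\Q:F(\mathbf{p},\mathbf{y})=t\}|$, so that $\sum_t\nu_t(\mathbf{y})=|\Q|$ and $|\Delta_F(\Q,\mathbf{y})|=|\{t:\nu_t(\mathbf{y})>0\}|$. Cauchy--Schwarz gives $|\Delta_F(\Q,\mathbf{y})|\ge |\Q|^2/\sum_t\nu_t(\mathbf{y})^2$, and using $\sum_t(\nu_t(\mathbf{y})-|\Q|/q)^2=\sum_t\nu_t(\mathbf{y})^2-|\Q|^2/q$ the desired bound $|\Delta_F(\Q,\mathbf{y})|\ge q/2$ follows once
\[\sum_t\Big(\nu_t(\mathbf{y})-\tfrac{|\Q|}{q}\Big)^2\le\frac{|\Q|^2}{q}.\]
So it is enough to prove that this centered second moment exceeds $|\Q|^2/q$ for at most $|\P|/2$ pins $\mathbf{y}\in\P$; the rest then form the required set $\P'$.

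The heart of the argument is to estimate $\sum_{\mathbf{y}\in\P}\sum_t(\nu_t(\mathbf{y})-|\Q|/q)^2$. Since each summand is non-negative, I would enlarge the pin range to all of $\F^d$,
\[\sum_{\mathbf{y}\in\P}\sum_t\Big(\nu_t(\mathbf{y})-\tfrac{|\Q|}{q}\Big)^2\le\sum_{\mathbf{y}\in\F^d}\sum_t\nu_t(\mathbf{y})^2-q^{d-1}|\Q|^2,\]
where the equality uses $\sum_{\mathbf{y}\in\F^d}\sum_t\nu_t(\mathbf{y})=q^d|\Q|$. The double sum now counts triples $(\mathbf{p},\mathbf{p}',\mathbf{y})\in\Q\times\Q\times\F^d$ with $F(\mathbf{p},\mathbf{y})=F(\mathbf{p}',\mathbf{y})$. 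Here the non-degeneracy of $F$ is decisive: writing $F(\mathbf{x},\mathbf{y})=g_1(\mathbf{x})+g_2(\mathbf{y})+(x_1^{b_1},\ldots,x_d^{b_d})M(y_1^{c_1},\ldots,y_d^{c_d})^T$, the term $g_2(\mathbf{y})$ cancels in the difference, so for fixed $\mathbf{p},\mathbf{p}'$ the equation reads $\sum_i v_i\,y_i^{c_i}=g_1(\mathbf{p}')-g_1(\mathbf{p})$ with $(v_1,\ldots,v_d)=\big(p_1^{b_1}-(p'_1)^{b_1},\ldots,p_d^{b_d}-(p'_d)^{b_d}\big)M$. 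Because $\gcd(c_i,q-1)=1$, the substitution $z_i=y_i^{c_i}$ is a bijection of $\F^d$, turning this into an affine linear equation in $\mathbf{z}$, and invertibility of $M$ shows $(v_1,\ldots,v_d)=0$ precisely when $(p_1^{b_1},\ldots,p_d^{b_d})=((p'_1)^{b_1},\ldots,(p'_d)^{b_d})$.

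Thus each pair with $(v_1,\ldots,v_d)\ne 0$ contributes exactly $q^{d-1}$ admissible $\mathbf{y}$, while a degenerate pair contributes $q^d$ or $0$; writing $D$ for the number of ordered pairs with $(p_1^{b_1},\ldots,p_d^{b_d})=((p'_1)^{b_1},\ldots,(p'_d)^{b_d})$, this yields $\sum_{\mathbf{y}\in\F^d}\sum_t\nu_t^2\le q^{d-1}|\Q|^2+q^dD$ and hence
\[\sum_{\mathbf{y}\in\P}\sum_t\Big(\nu_t(\mathbf{y})-\tfrac{|\Q|}{q}\Big)^2\le q^dD.\]
Since each coordinate map $t\mapsto t^{b_i}$ is at most $b_i$-to-one, $D\le\big(\prod_i b_i\big)|\Q|=O(|\Q|)$, so the left side is $O(q^d|\Q|)$. (Alternatively, keeping the pins in $\P$, the off-diagonal part is a point--hyperplane incidence count to which Theorem \ref{dl2} applies; the direct evaluation above is what keeps the bound clean and independent of $|\P|$.) Markov's inequality then bounds the number of pins with centered second moment above $|\Q|^2/q$ by $O(q^d|\Q|)/(|\Q|^2/q)=O(q^{d+1}/|\Q|)$, which the hypothesis $|\P||\Q|\ge 2\sqrt3\,q^{d+1}$ forces to be at most $|\P|/2$.

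I expect the main difficulty to lie in the degenerate pairs. Non-degeneracy controls only the $\mathbf{y}$-exponents, through $\gcd(c_i,q-1)=1$, and says nothing about the $b_i$, so $\mathbf{p}\mapsto(p_1^{b_1},\ldots,p_d^{b_d})$ need not be injective and $D$ can exceed $|\Q|$; the point to verify is that $D=O(|\Q|)$ with an implied constant independent of $q$, which holds because the fixed exponents satisfy $\gcd(b_i,q-1)\le b_i$. Keeping this constant under control (together with the diagonal $\mathbf{p}=\mathbf{p}'$, which alone already accounts for the leading $q^d|\Q|$) is exactly what fixes the precise threshold; the clean value $2\sqrt3\,q^{d+1}$ and the proportions $|\P'|\ge|\P|/2$, $|\Delta_F(\Q,\mathbf{y})|\ge q/2$ come out of the same constant bookkeeping as in Theorem \ref{hello2} with $c=1/2$.
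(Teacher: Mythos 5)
Your proof is correct in outline but takes a genuinely different route from the paper's. The paper never writes out a proof of Corollary \ref{bodeds}: it is meant to be the bipartite rerun of the proof of Theorem \ref{hello2} with $c=1/2$, i.e.\ apply the spectral incidence bound (Theorem \ref{dl2}/Theorem \ref{hello11}, proved via the expander mixing lemma) to hypersurfaces indexed by one of the two sets and to points $(\mathbf{y},t)$ built from the pinned value sets, derive a lower bound on the first moment $\sum_{\mathbf{y}\in\P}|\Delta_F(\Q,\mathbf{y})|$ by contradiction, then pigeonhole. You bypass the incidence machinery entirely: pointwise Cauchy--Schwarz, a direct count of solutions of $F(\mathbf{p},\mathbf{y})=F(\mathbf{p}',\mathbf{y})$ over \emph{all} $\mathbf{y}\in\F^d$ (using only that $y_i\mapsto y_i^{c_i}$ is a bijection and that $M$ is invertible), and Markov. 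This buys two things: it is elementary, and it is quantitatively stronger. Indeed, the bipartite form of the paper's own argument at $c=1/2$ requires $|\P||\Q|\ge\frac{1-c^2}{c^4}q^{d+1}=12\,q^{d+1}$ (the stated constant $2\sqrt3$ looks like the threshold $2\sqrt{3}\,q^{(d+1)/2}$ of Theorem \ref{hello2} with the exponent doubled but the constant not squared), whereas your method, once the degenerate pairs are controlled, needs only $|\P||\Q|\ge 2q^{d+1}$ and therefore actually delivers the corollary at its stated threshold, which the paper's route does not.

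The gap is exactly the one you flagged, and it is not mere bookkeeping. Markov gives at most $q^{d+1}D/|\Q|^2$ bad pins, so to get below $|\P|/2$ under $|\P||\Q|\ge2\sqrt3\,q^{d+1}$ you need $D\le\sqrt3\,|\Q|$, not just $D=O(|\Q|)$; since $D\ge|\Q|$ always (the diagonal), this effectively forces $D=|\Q|$, i.e.\ injectivity of $\mathbf{p}\mapsto(p_1^{b_1},\ldots,p_d^{b_d})$, i.e.\ $\gcd(b_i,q-1)=1$ for all $i$. Your fallback $D\le\bigl(\prod_ib_i\bigr)|\Q|$ cannot rescue the stated constant (a single $b_i$ with $\gcd(b_i,q-1)\ge2$ can already push $D$ past $\sqrt3\,|\Q|$), and ``fixed exponents'' is not a legitimate assumption here, since the $b_i$ may grow with $q$: taking $b_1=\cdots=b_d=(q-1)/2$, $M=I$, $g\equiv0$, $c_i=1$, and $\Q$ the set of points all of whose coordinates are nonzero squares, one gets $F(\mathbf{p},\mathbf{y})=y_1+\cdots+y_d$ for every $\mathbf{p}\in\Q$, so $|\Delta_F(\Q,\mathbf{y})|=1$ while the hypothesis holds for $\P=\F^d$, $d\ge2$, and $q$ large --- the corollary is simply \emph{false} under the paper's literal definition of non-degeneracy. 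The correct repair is to observe that the gcd condition on the $b_i$ is implicitly part of the framework: the paper's own proof of Theorem \ref{hello2} feeds the hypersurfaces $x_{d+1}=F(\mathbf{x},\mathbf{y})$, whose $\mathbf{x}$-exponents are the $b_i$, into Theorem \ref{dl2}, and that theorem's variety family requires exponents coprime to $q-1$. With $\gcd(b_i,q-1)=1$ assumed, degenerate pairs are exactly the diagonal, $D=|\Q|$, the bad-pin count is $q^{d+1}/|\Q|\le|\P|/(2\sqrt3)<|\P|/2$, and your proof closes, indeed with the better threshold $2q^{d+1}$.
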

\paragraph{The Beck type theorem for points and spheres:} Let $\P$ be a set of points in $\mathbb{F}_q^2$. Iosevich, Rudnev, and Zhai \cite{area} made the first investigation on the finite fields analogue of the Beck type theorem for points and lines in $\mathbb{F}_q^2$. More precisely, they proved that if $|\P|\ge 64q\log q$, then the number of distinct lines determined by $\P$ is at least $q^2/8$. In \cite{lund},  Lund and Saraf improved the condition of the cardinality of $\P$ to $3q$. Recently, Cilleruelo et al. \cite{iosevichetal} studied the Beck type theorem for points and circles in $\mathbb{F}_q^2$ by employing the lower bound on the number of incidences between points and circles in $\mathbb{F}_q^2$. Formally, their result is as follows.
\begin{theorem}[\textbf{Cilleruelo et al.} \cite{iosevichetal}]\label{becktype}
Let $\P$ be a set of points in $\mathbb{F}_q^2$. If $|\P|\ge 5q$, then the number of distinct circles determined by $\P$ is at least $4q^3/9$.
\end{theorem}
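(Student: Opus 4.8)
The plan is to bound the number of circles determined by $\P$ from below by controlling its complement, namely the \emph{poor} circles carrying at most two points of $\P$. Write $n:=|\P|$ and let $\S$ denote the family of all $q^3$ circles $V(x_1^2+x_2^2+a_1x_1+a_2x_2+a_3)$ with $(a_1,a_2,a_3)\in\F^3$; a circle is determined by $\P$ precisely when it contains at least three points of $\P$, so the quantity to estimate is $N:=q^3-|\S_0|$, where $\S_0\subseteq\S$ collects the circles meeting $\P$ in at most two points. It therefore suffices to prove $|\S_0|\le 5q^3/9$.

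First I would estimate the incidences $I(\P,\S_0)$ in two ways. On the one hand, every circle in $\S_0$ carries at most two points of $\P$, so trivially $I(\P,\S_0)\le 2|\S_0|$. On the other hand, applying the point--circle incidence bound (Corollary \ref{hello12} with $d=2$, for which $q^{d/2}=q$) to the subfamily $\S_0$ gives the lower bound
\[I(\P,\S_0)\ge \frac{n|\S_0|}{q}-q\sqrt{n|\S_0|}.\]
Combining the two displays yields $\left(\frac{n}{q}-2\right)|\S_0|\le q\sqrt{n|\S_0|}$, and (assuming $|\S_0|>0$, the case $|\S_0|=0$ being trivial) after dividing by $\sqrt{|\S_0|}$ and squaring,
\[|\S_0|\le \frac{q^2 n}{\left(n/q-2\right)^2}.\]

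To finish, set $u:=n/q\ge 5$ and observe that $u\mapsto u/(u-2)^2$ is decreasing for $u>2$, so its maximum over $u\ge 5$ is attained at $u=5$ and equals $5/9$. Hence $|\S_0|\le \frac{q^3 u}{(u-2)^2}\le \frac{5q^3}{9}$ for every $n\ge 5q$, and consequently $N\ge q^3-\frac{5q^3}{9}=\frac{4q^3}{9}$, as claimed.

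The conceptual point, and the step I would expect to require the most care, is the slightly counterintuitive use of the incidence estimate: rather than lower-bounding the incidences on the rich circles directly, I apply the \emph{lower} bound of Corollary \ref{hello12} to the poor family $\S_0$, where it clashes with the trivial upper bound $2|\S_0|$ and forces $\S_0$ to be small. The remaining subtleties are bookkeeping: one must check that the parametrization by $(a_1,a_2,a_3)\in\F^3$ counts each circle-with-at-least-three-points exactly once (three non-collinear points determine the coefficients uniquely, and any three points on a genuine circle are non-collinear), that the family size is exactly $q^3$, and that the threshold $n\ge 5q$ is exactly what makes the function $u/(u-2)^2$ drop to $5/9$. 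A handful of degenerate conics (those of zero radius) affect only lower-order terms and can be absorbed without changing the constant $4/9$.
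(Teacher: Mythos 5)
Your proof is correct, and since this theorem is one the paper cites from Cilleruelo et al.\ rather than proves, there is no internal proof to compare against; your argument is exactly the method the paper attributes to that source, namely playing the lower bound of Corollary \ref{hello12} (applied to the family $\S_0$ of circles meeting $\P$ in at most two points) against the trivial upper bound $I(\P,\S_0)\le 2|\S_0|$, followed by the monotonicity computation at $u=|\P|/q\ge 5$. One small tightening: since Corollary \ref{hello12} covers spheres of arbitrary radius and the $q^3$ coefficient triples do yield $q^3$ pairwise distinct point sets, it is cleanest to count zero-radius circles among the circles determined by $\P$, which makes your closing remark about absorbing degenerate conics unnecessary --- as stated, excluding the $q^2$ zero-radius circles would only give the weaker bound $4q^3/9-q^2$, not $4q^3/9$.
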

As a consequence of Theorem \ref{becktype}, we obtain the following result.
\begin{theorem}
Let $\P$ be a set of $5q$ points in $\mathbb{F}_q^2$. Then the number of distinct radii of circles determined by $\P$ is at least $4q/9$.
\end{theorem}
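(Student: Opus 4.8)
The plan is to derive this bound from Theorem~\ref{becktype} by a short double-counting argument that exploits the parametrization of a circle in $\mathbb{F}_q^2$ by its center together with its (squared) radius. First I would record the relevant structural fact: every circle in $\mathbb{F}_q^2$ can be written as $(x-a)^2+(y-b)^2=r$ for a center $(a,b)\in\mathbb{F}_q^2$ and a radius value $r\in\mathbb{F}_q$, so that a circle is uniquely specified by the triple $(a,b,r)$. Consequently, once the radius value $r$ is fixed, a circle is determined by its center alone, and there are at most $q^2$ possible centers; hence at most $q^2$ distinct circles can share any single radius value.

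With this in hand, the argument is pure pigeonhole. Let $R$ denote the number of distinct radii of circles determined by $\P$, and let $C$ denote the number of distinct circles determined by $\P$. Grouping the circles according to their radius value, each of the $R$ distinct radii accounts for at most $q^2$ circles, so $C\le R\,q^2$. On the other hand, since $|\P|=5q$, Theorem~\ref{becktype} provides the lower bound $C\ge 4q^3/9$. Combining the two estimates gives $R\,q^2\ge C\ge 4q^3/9$, and dividing by $q^2$ yields $R\ge 4q/9$, which is exactly the claimed bound.

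I do not expect a genuine obstacle in this argument: the only point needing care is the observation that fixing the radius value leaves only the center free, bounding by $q^2$ the number of the $4q^3/9$ guaranteed distinct circles that can carry a common radius. In particular, the deduction is insensitive to the precise meaning of ``determined by $\P$'' used in Theorem~\ref{becktype}, since it relies solely on the coarse count of circles per radius.
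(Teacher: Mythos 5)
Your proof is correct and is essentially the paper's own argument: the paper deduces this result from Theorem~\ref{becktype} by exactly the same pigeonhole step (at most $q^2$ centers, hence at most $q^2$ circles per radius value), as made explicit in its proof of the three-dimensional analogue, Theorem~\ref{radii}, where $q^4/9$ spheres divided by at most $q^3$ centers yields $q/9$ radii.
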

Note that it is hard to generalize Theorem \ref{becktype} in higher dimensional cases by their arguments. In the following theorem, we will give an approach to address this problem by using a result on the number of pinned distinct distances.
\begin{theorem}\label{beckthm}
Let $\P$ be a set of $8q^2$ points in $\mathbb{F}_q^3$. Then the number of distinct spheres determined by $\P$ is at least $q^4/9$.
\end{theorem}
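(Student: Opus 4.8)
The plan is to run the classical Beck double-counting scheme: measure the richness of spheres through their first two incidence moments with $\P$, and certify genuine spanning by a plane-section argument. Write $\S$ for the family of all spheres $\{\mathbf{x}\in\F^3 : ||\mathbf{x}-\mathbf{c}||=r\}$ with centre $\mathbf{c}\in\F^3$ and $r\in\F^\times$, so $|\S|=(1+o(1))q^4$, and for $s\in\S$ set $m_s:=|s\cap\P|$. A sphere $s$ is \emph{determined by} $\P$ as soon as $s\cap\P$ is not contained in a single plane: then $s\cap\P$ affinely spans $\F^3$, so it contains four affinely independent points, and since $q$ is odd these four points have a unique circumsphere, which must be $s$. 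Hence it suffices to exhibit at least $q^4/9$ spheres whose intersection with $\P$ is non-coplanar.

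First I would record two exact moment identities by double counting. Every point of $\P$ lies on exactly $q^3$ spheres (one per centre, the radius being forced), so $\sum_{s\in\S}m_s=(1+o(1))|\P|q^3$; the same first moment also drops out of the point--sphere incidence bound of Corollary \ref{hello12} in dimension three, but here it is exact up to the negligible contribution of isotropic radii. For the second moment, every ordered pair of distinct points of $\P$ lies on exactly $q^2$ spheres, because the admissible centres form the perpendicular bisector $\{\mathbf{c}:||\mathbf{c}-\mathbf{x}||=||\mathbf{c}-\mathbf{x}'||\}$, an affine plane in $\F^3$ (using $q$ odd). Therefore $\sum_{s\in\S}m_s^2=|\P|q^3+|\P|(|\P|-1)q^2=(1+o(1))|\P|^2q^2$.

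The elementary geometric input is that a plane meets a sphere in a plane conic, and a conic in $\F_q^2$ carries at most $2q$ points; thus $m_s\ge 2q+1$ forces $s\cap\P$ to be non-coplanar, so $s$ is determined by $\P$. Let $A=\{s:m_s\ge 2q+1\}$. Spheres outside $A$ satisfy $m_s\le 2q$ and number at most $|\S|$, so they contribute at most $2q\,|\S|=(2+o(1))q^5$ to the first moment; with $|\P|=8q^2$ we get $\sum_{s\in A}m_s\ge \sum_{s}m_s-(2+o(1))q^5=(6+o(1))q^5$, while $\sum_{s\in A}m_s^2\le\sum_s m_s^2=(64+o(1))q^6$. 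Cauchy--Schwarz now yields
\[ |A|\ \ge\ \frac{\bigl(\sum_{s\in A}m_s\bigr)^2}{\sum_{s\in A}m_s^2}\ \ge\ \frac{(6+o(1))^2q^{10}}{(64+o(1))q^6}\ =\ \Bigl(\tfrac{9}{16}+o(1)\Bigr)q^4, \]
which clears $q^4/9$ for all large $q$; indeed the cardinality $8q^2$ is tuned so that this final ratio beats $1/9$ with substantial room.

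I expect the general-position bookkeeping, not the counting, to be the main obstacle. The naive bound ``a plane meets a sphere in at most $q+1$ points'' fails for degenerate sections --- a rank-one restricted form gives a pair of parallel lines with up to $2q$ points --- which is exactly why I threshold at $2q+1$ rather than at $4$; one must confirm this degenerate count and check it does not push the constant below $1/9$. A secondary technicality is the treatment of degenerate spheres of radius $0$ (isotropic cones) and of pairs whose bisector degenerates, both of which touch only $O(q^3)$ spheres and are absorbed into the $o(1)$ terms. Finally I would flag that the pinned-distance result of Corollary \ref{bodeds}, applied with centres ranging over $\F^3$, certifies only spheres through a \emph{single} point of $\P$ and so cannot witness that a sphere is spanned by $\P$; it is the moment method above that upgrades ``meets $\P$'' to ``determined by $\P$''.
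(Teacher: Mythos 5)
Your proof is correct, and it takes a genuinely different route from the paper. The paper argues structurally: by pigeonhole it finds two parallel planes $U,V$ with $|U\cap \P|\ge 5q$ and $|V\cap \P|\ge 8q$, applies the planar Beck-type theorem for circles (Theorem \ref{becktype}) inside $U$ to get at least $4q^3/9$ circles, projects their centres to $V$, and uses the pinned-distance bound (Corollary \ref{bodeds}) to attach at least $q/2$ admissible radii to most centres, each (circle, radius) pair lifting to a distinct sphere; the unique-circumsphere lemma for non-coplanar quadruples appears there too, as a preliminary lemma. You instead run a self-contained second-moment argument, and all the steps check out: the first and second moments are exact up to the $r\ne 0$ and isotropic-cone corrections you flag (each point lies on exactly $q^3-q^2$ spheres; the perpendicular bisector of a distinct pair is a genuine plane since $q$ is odd, so each ordered pair lies on $q^2-O(q)$ spheres), and the worry you raise about degenerate plane sections resolves favourably: since $x_1^2+x_2^2+x_3^2$ is nondegenerate on $\F^3$ its Witt index is $1$, so no plane is totally isotropic, the restriction of the sphere's equation to a plane is a nonzero quadratic, and Schwartz--Zippel (Lemma \ref{sch}) gives the bound $2q$, attained only by parallel line pairs --- so your threshold $m_s\ge 2q+1$ does force non-coplanarity. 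The circumsphere lemma then both certifies that each rich sphere is determined by $\P$ and guarantees that distinct parameter tuples $(\mathbf{c},r)$ in your set $A$ are distinct spheres as point sets, so Cauchy--Schwarz legitimately yields $(9/16+o(1))q^4\ge q^4/9$ determined spheres. Comparing the two: your argument is more elementary and modular (it needs neither Theorem \ref{becktype} nor Corollary \ref{bodeds}, whose hypothesis $|F_2|^2\ge 2\sqrt{3}q^4$ is in fact not comfortably met by the set $F_2$ of size $4q^2/9$ arising in the paper's proof), and it generalizes immediately to $\F^d$ with $c'q^{d-1}$ points producing $cq^{d+1}$ spheres, which the paper only states as a remark; the paper's approach, in exchange, produces an explicit structured family of spheres (stacks over planar circles) that meshes with its pinned-distance machinery. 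Both proofs are asymptotic in $q$ --- yours through the $o(1)$ terms, the paper's through its $o(q^2)$ loss --- so neither is at a disadvantage there.
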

As a consequence of Theorem \ref{beckthm}, we obtain the following result on the number of distinct radii of spheres determined by a set of points in $\mathbb{F}_q^3$.
\begin{theorem}\label{radii}
Let $\P$ be a set of $8q^2$ points in $\mathbb{F}_q^3$. Then the number of distinct radii of spheres determined by $\P$ is at least $q/9$.
\end{theorem}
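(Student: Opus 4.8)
The plan is to derive Theorem \ref{radii} from Theorem \ref{beckthm} by a short counting (pigeonhole) argument in which the spheres determined by $\P$ are grouped according to their radii. Recall that a sphere in $\mathbb{F}_q^3$ is a set of the form $S_{\mathbf{c},r}=\{\mathbf{x}\in\mathbb{F}_q^3 : ||\mathbf{x}-\mathbf{c}||=r\}$, specified by a center $\mathbf{c}\in\mathbb{F}_q^3$ and a radius $r\in\mathbb{F}_q$. Thus every sphere carries a radius $r\in\mathbb{F}_q$, and by a ``distinct radius'' we mean a distinct value of $r$.

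First I would invoke Theorem \ref{beckthm}: since $\P$ consists of $8q^2$ points in $\mathbb{F}_q^3$, the collection $\S$ of distinct spheres determined by $\P$ satisfies $|\S|\ge q^4/9$. Next I would partition (or, more safely, cover) $\S$ according to radius, writing $\S=\bigcup_{r\in\mathbb{F}_q}\S_r$, where $\S_r$ is the set of those spheres in $\S$ that can be written with radius $r$. The key observation is that for a fixed radius $r$, a sphere $S_{\mathbf{c},r}$ is determined by its center $\mathbf{c}\in\mathbb{F}_q^3$, so there are at most $q^3$ distinct such spheres; that is, $|\S_r|\le q^3$ for every $r$. (Different centers may accidentally produce the same sphere, but this only decreases the count, so the bound is unaffected.)

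Finally, letting $R$ denote the number of distinct radii appearing among the spheres of $\S$, each sphere is accounted for by at least one value of $r$, so $|\S|\le \sum_{r}|\S_r|\le R\,q^3$. Combining this with the lower bound $|\S|\ge q^4/9$ gives $q^4/9\le R\,q^3$, and hence $R\ge q/9$, which is the claimed bound.

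I do not expect any serious obstacle here: the entire content is the inequality $|\S_r|\le q^3$ together with Theorem \ref{beckthm}. The only points meriting (minor) care are the bookkeeping that each determined sphere contributes to at least one radius class and that the number of spheres of a fixed radius is controlled by the number $q^3$ of admissible centers; both are immediate from the parametrization of spheres by center and radius, so the argument goes through even if a given sphere happens to admit more than one representation.
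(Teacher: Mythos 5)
Your proposal is correct and is essentially identical to the paper's own proof: both invoke Theorem \ref{beckthm} to get at least $q^4/9$ distinct spheres and then use the pigeonhole bound that each fixed radius accounts for at most $q^3$ spheres (one per center in $\mathbb{F}_q^3$), yielding at least $q/9$ distinct radii. Your write-up merely makes the pigeonhole bookkeeping more explicit than the paper's two-line version.
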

\begin{remark}
We note that one can follow the proof of Theorem \ref{beckthm} to prove that there exist constants $c=c(d)$ and $c'=c'(d)$ such that there are at least $cq^{d+1}$ $d$-dimensional spheres determined by a set of $c'q^{d-1}$ points in $\mathbb{F}_q^d$.
\end{remark}
\subsection{Distinct distances between points and lines}
As already mentioned in the abstract, we use the same approach to address the finite field variants of two recent results due to Sharir et al. \cite{sharir}, involving distances between points and lines. The first bound is a lower bound for the minimum number of distinct distances between a set of points and a set of lines, both in the plane. A second result is a lower bound for the minimum number of distinct distances between a set of non-collinear points and the lines that they span.

\begin{theorem}[{\bf Sharir et al.} \cite{sharir}]
For $m^{1/2}\leq n\leq m^2$, the minimum number $D(m,n)$ of point-line distances
between $m$ points and $n$ lines in $\mathbb{R}^2$ satisfies $D(m,n)=\Omega(m^{1/5}n^{3/5})$
\end{theorem}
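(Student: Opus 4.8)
The plan is to run a second-moment (energy) argument together with the Szemer\'edi--Trotter incidence bound. Write $m=|\P|$ and $n=|\L|$, and let $D=D(m,n)$ denote the number of distinct point--line distances. For a line $\ell\in\L$ and a value $\delta$, let $x_{\ell,\delta}$ be the number of points of $\P$ lying at distance exactly $\delta$ from $\ell$. Since each of the $mn$ point--line pairs realizes exactly one distance, $\sum_{\ell,\delta}x_{\ell,\delta}=mn$, while the number of nonzero terms is at most $nD$. Cauchy--Schwarz then yields the lower bound
\[\sum_{\ell,\delta}x_{\ell,\delta}^{2}\ \ge\ \frac{(mn)^{2}}{nD}=\frac{m^{2}n}{D}.\]
The entire argument reduces to producing an upper bound on the \emph{same} quantity, which can then be compared with this lower bound and solved for $D$.

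First I would reinterpret $\sum_{\ell,\delta}x_{\ell,\delta}^{2}$ combinatorially as the number of ordered triples $(\ell,p,p')$ with $p,p'\in\P$, $\ell\in\L$, and $d(p,\ell)=d(p',\ell)$. The elementary observation driving everything is that, writing the unit normal of $\ell$ as $u$, two points are equidistant from $\ell$ exactly when $u\cdot(p-p')=0$ or $u\cdot(p+p')=2c$; geometrically, $\ell$ is either \emph{parallel} to the segment $pp'$, or it \emph{passes through its midpoint}. Hence $\sum_{\ell,\delta}x_{\ell,\delta}^{2}=mn+A+B$, where $A$ counts the parallel triples and $B$ the midpoint triples, a union bound absorbing the triples counted in both.

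Next I would estimate $B$ by treating the set of midpoints $\{(p+p')/2\}$ together with the lines of $\L$ as an incidence problem and invoking Szemer\'edi--Trotter, and estimate $A$ by grouping the lines of $\L$ by direction and, for each direction, counting the point pairs realizing it --- again an incidence count. Substituting the resulting upper bound into $m^{2}n/D\le mn+A+B$ and balancing the competing contributions over the range $m^{1/2}\le n\le m^{2}$ is what should deliver $D=\Omega(m^{1/5}n^{3/5})$.

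The hard part will be the bound on $B$ and its interaction with $A$. Coincident midpoints can occur with large multiplicity --- many pairs share a single midpoint in a centrally symmetric configuration --- so a naive Szemer\'edi--Trotter count on the \emph{distinct} midpoints (which would give the spuriously strong $m^{2/3}n^{1/3}$) undercounts $B$, whereas weighting by multiplicities can overshoot. Controlling this genuinely larger term requires either a weighted incidence estimate or a case distinction according to whether the midpoints cluster or the directions of point pairs spread out; the crux is to show that an extremal configuration cannot do both at once. It is precisely this balance that forces the exponents $1/5$ and $3/5$, and it is the delicate heart of the argument.
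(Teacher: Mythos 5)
You should first be aware that the paper contains no proof of this statement: it is quoted verbatim from Sharir, Smorodinsky, Valculescu and de Zeeuw \cite{sharir} as background motivation. The paper's own contribution in this direction is Theorem \ref{thm1}, a finite-field analogue proved with the point-line distance bipartite graph and an eigenvalue bound (Lemma \ref{expander}, Theorem \ref{hello}), a method that has no counterpart in $\mathbb{R}^2$. So your proposal can only be measured against the original Euclidean argument, and there it has genuine gaps.

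Your set-up is fine: the Cauchy--Schwarz bound $\sum_{\ell,\delta}x_{\ell,\delta}^2\ge m^2n/D$ is correct (each line realizes at most $D$ distances), and so is the dichotomy that two points equidistant from $\ell$ have $pp'$ parallel to $\ell$ or midpoint on $\ell$. The gap is that the program ``bound $A$ and $B$ by Szemer\'edi--Trotter and balance'' cannot be completed, because the parallel term $A$ admits no nontrivial upper bound at all. If all $m$ points lie on one line (or spread over two lines) parallel to all $n$ lines of $\L$, then every ordered pair of points is equidistant from every line, so $A=\Theta(m^2n)$, the energy is as large as it can possibly be, and the inequality $m^2n/D\le mn+A+B$ returns only $D\gtrsim 1$. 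The theorem is still true in that configuration, but for a reason entirely outside the energy framework: the distances from a single point to $n$ parallel lines already give $D\ge n/2\gtrsim m^{1/5}n^{3/5}$ once $n\ge m^{1/2}$. Hence any correct proof must detect and separately dispose of such configurations; your proposed case distinction (midpoints clustering versus directions spreading) does not cover this example, in which both structures are degenerate simultaneously. There is a second, quantitative obstruction: even ignoring $A$, the weighted/dyadic Szemer\'edi--Trotter treatment of the midpoint term that you describe can only certify $B=O\bigl(m^{5/3}n^{2/3}+m^2\log m+mn\bigr)$ (midpoint multiplicities run up to $m/2$), and feeding this into Cauchy--Schwarz yields at best $D=\Omega\bigl(m^{1/3}n^{1/3}\bigr)$, which is \emph{weaker} than $m^{1/5}n^{3/5}$ throughout the stated range $n>m^{1/2}$. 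So the exponents $1/5$ and $3/5$ cannot ``emerge from balancing'' inside this framework; the second moment inequality is simply too lossy. For comparison, the actual proof in \cite{sharir} avoids the point-pair energy altogether: a pair $(p,\ell)$ at distance $\delta$ is encoded as a tangency between $\ell$ and the circle of radius $\delta$ centered at $p$ (equivalently, an incidence between $\P$ and the translates of the lines of $\L$ by the realized distances), so that $mn$ is bounded above by a tangency/incidence count involving only $m$, $n$ and $D$; the multiplicity problem there concerns coinciding translates, i.e.\ parallel families in $\L$, and is handled by a structural analysis of those families --- exactly the ingredient whose absence breaks your outline.
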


\begin{theorem}[{\bf Sharir et al.} \cite{sharir}]
The minimum number $H(m)$ of point-line distances between $m$ non-collinear
points and their spanned lines satisfies $H(m)=\Omega(m^{4/3})$.
\end{theorem}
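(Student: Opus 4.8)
The plan is to reduce the statement to the point--line distance bound $D(m,n)=\Omega(m^{1/5}n^{3/5})$ stated just above, together with a structural dichotomy for the lines spanned by a non-collinear point set. Fix an arbitrary set $\P$ of $m$ non-collinear points, let $\L$ be the set of lines spanned by $\P$ (every line through at least two points of $\P$), and put $n=|\L|$. It suffices to show that this one configuration realizes $\Omega(m^{4/3})$ distinct point--line distances. Since the distances realized against any subfamily of $\L$ form a subset of those realized against all of $\L$, and since $D(m,n')$ lower-bounds \emph{every} configuration of $m$ points and $n'$ lines, the number of distinct distances of $\P$ against $\L$ is at least $D(m,n')$ for every $n'\le n$. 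So the game is to show that either $n$ is large, or the combinatorial structure forcing $n$ to be small can itself be used to manufacture many distinct distances.

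First I would invoke Beck's theorem: there are absolute constants $c_1,c_2>0$ such that either (A) some line contains at least $c_1 m$ points of $\P$, or (B) the points span at least $c_2 m^2$ lines. In case (B) we are immediately done, since taking $n'=c_2m^2$ of the spanned lines gives
\[
H(m)\ \ge\ D(m,c_2m^2)\ =\ \Omega\!\left(m^{1/5}(m^2)^{3/5}\right)\ =\ \Omega(m^{7/5}),
\]
which beats the required $\Omega(m^{4/3})$ with room to spare. The real content is case (A), where almost all points may lie on one line and $\L$ can be as small as $\Theta(m)$; there a direct application of $D(m,n)$ yields only $\Omega(m^{4/5})$, which is too weak.

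For case (A), let $\ell_0$ be a line carrying a subset $A\subseteq\P\cap\ell_0$ with $|A|\ge c_1m$, and fix a point $b\in\P\setminus\ell_0$, which exists because $\P$ is non-collinear. Choosing coordinates so that $\ell_0$ is the $x$-axis, $A=\{(x_a,0)\}$, and $b=(0,h)$ with $h\neq 0$, I would analyze the pencil of spanned lines $\{\overline{ba}:a\in A\}$, computing that the distance from a point $(x_{a'},0)\in A$ to the line $\overline{ba}$ equals $w_a\,|x_{a'}-x_a|$ with weight $w_a=h/\sqrt{h^2+x_a^2}\in(0,1]$. For a single fixed $a$ this already produces $\Omega(m)$ distinct distances (the distinct gaps $|x_{a'}-x_a|$), but to reach $m^{4/3}$ one must show that the union over all $a\in A$ of these weighted copies of the gap set of $A$ is large. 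I would do this by the energy method: if the realized distance set has size $t$, then counting the $|A|^2$ pairs $(a,a')$ and applying Cauchy--Schwarz gives $t\ge |A|^4/E$, where $E$ is the number of quadruples $(a,a',\tilde a,\tilde a')$ with $w_a|x_{a'}-x_a|=w_{\tilde a}|x_{\tilde a'}-x_{\tilde a}|$.

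The main obstacle is exactly the bound on $E$ in case (A): the weights $w_a$ destroy the translation invariance that trivializes the ordinary distinct-distances problem on a line, so $E$ must be controlled by counting incidences between the product points $(x_a,x_{a'})$ and the genuinely two-parameter family of curves $w_a|x_{a'}-x_a|=\mathrm{const}$ via a Szemer\'{e}di--Trotter-type estimate. If one can establish $E=O(|A|^{8/3})$ --- the threshold a clean incidence bound for a bounded-degree curve family should deliver --- then $t=\Omega(|A|^{4/3})=\Omega(m^{4/3})$, matching the target. Verifying that these curves have bounded degree and bounded pairwise intersection, so that the incidence machinery (or a suitable specialization of the $D(m,n)$ argument) applies, is where the technical work concentrates.
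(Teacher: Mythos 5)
First, a point of orientation: the paper does not prove this theorem at all --- it is quoted from Sharir et al.\ \cite{sharir} purely as motivation, and the paper's own contribution in this direction is the finite-field analogue (Theorem \ref{thm1} and Corollary \ref{co1}), proved by entirely different means: a spectral bound on the third eigenvalue of a point-line distance graph combined with the Lund--Saraf finite-field Beck theorem (Theorem \ref{thm4}). So your proposal must be measured against the argument in \cite{sharir} itself, and in outline you reproduce it faithfully: Beck's dichotomy; in the many-lines case $D(m,c_2m^2)=\Omega(m^{7/5})$, which is legitimate since $c_2m^2$ lies in the admissible range $m^{1/2}\le n\le m^2$ and distances against a subfamily only undercount; and in the rich-line case the pencil through a point $b\notin\ell_0$, with the correct weighted-gap formula $w_a|x_{a'}-x_a|$, $w_a=h/\sqrt{h^2+x_a^2}$, followed by a Cauchy--Schwarz energy reduction $t\ge |A|^4/E$.

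The genuine gap is the bound $E=O(|A|^{8/3})$, which you assume rather than prove --- and, more seriously, the incidence formulation you sketch cannot deliver it. The curves $w_a|x_{a'}-x_a|=\mathrm{const}$ in the $(x_a,x_{a'})$-plane are the level sets of the \emph{single} function $g(u,v)=h^2(v-u)^2/(h^2+u^2)$: they partition the plane, exactly one passes through any given point, so this is a one-parameter family for which Szemer\'edi--Trotter/Pach--Sharir machinery degenerates. The only estimate that setup yields is that each level conic meets each vertical line $u=x_a$ in at most two points, so each value is attained by at most $2|A|$ pairs, giving $E\le 2|A|^3$ and hence only $t=\Omega(m)$, far short of $m^{4/3}$. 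To reach $|A|^{8/3}$ one must instead rewrite $E$ as the number of incidences between the $|A|^2$ points $(x_a,x_{\tilde a})\in A\times A$ and the $|A|^2$ curves $\gamma_{a',\tilde a'}=\{(u,\tilde u)\colon g(u,x_{a'})=g(\tilde u,x_{\tilde a'})\}$, verify that this is a bounded-degree family with \emph{two} degrees of freedom and bounded multiplicity --- which requires a genuine degeneracy analysis of when two such curves coincide or share components --- and only then apply the Pach--Sharir bound with $k=2$ to get $O\left((|A|^2)^{2/3}(|A|^2)^{2/3}+|A|^2\right)=O(|A|^{8/3})$. That reformulation and the accompanying degeneracy analysis are precisely the technical heart of the proof in \cite{sharir}; as written, your case (A) reduces the theorem to an estimate that your chosen incidence setup provably cannot produce, so the argument is incomplete at its central step.
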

In the plane over finite fields, a line $ax+by+c=0$ is \textit{degenerate} if and only if $a^2+b^2=0$. Similarly, a hyperplane $a_1x_1+\cdots+a_dx_d+a_{d+1}=0$ in $\mathbb{F}_q^d$ is \textit{degenerate} if and only if $a_1^2+\cdots+a_d^2=0$. For a point $p=(x_p,y_p)\in\F^2$ and a non-degenerate line $l:\;ax+by+c=0$ in $\F^2$, let $d(p, l)$ denote the distance function between $p$ and $l$, defined as \[d(p, l)=\frac{(ax_p+by_p+c)^2}{a^2+b^2}.\] For a set of points $\P$ in $\F^2$ and a line $l$, set $\Delta_{\F}(\P,l)=\{d(p,l): p\in\P\}$. Distances between points and non-degenerate lines are preserved under rotations and translations.

Similarly, for a point $p=(x_p^1,x_p^2,\ldots,x_p^d)\in\F^d$ and a non-dengenerate hyperplane $h:a_1x_1+\cdots+a_dx_d+a_{d+1}=0$, we define the point-hyperplane distance \[d(p, h)=(a_1x_p^1+\cdots+a_dx_p^d+a_{d+1})^2/(a_1^2+\cdots+a_d^2).\] For a set of points $\P$ in $\F^d$ and a hyperplane $h$, we let $\Delta_{\F}(\P,h)=\{d(p,h): p\in\P\}$.

We prove that under a similar condition as in the result due to Chapman et al. \cite{chapman1} on the number of distinct distances between points in $\F^2$, the set of distances between $\P$ and $\L$ contains a positive proportion of the elements of $\mathbb{F}_q$.
\begin{theorem}\label{thm1}
Let $\P$ be a set of points and $\L$ be a set of non-degenerate lines in $\F^2$, such that
\[|\P||\L|\ge \frac{4(1-c^2)}{(1/2-(1-c^2))^2}q^{8/3}\]
with $1-c^2<1/4$. Then there exists a subset $\L'$ of $\L$ with $|\L'|=(1-o(1))|\L|$, so that $|\Delta_{\F}(\P, l)| \gtrsim q$, for each line $l$ in $\L'$.
\end{theorem}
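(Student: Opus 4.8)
The plan is to reduce the lower bound on $|\Delta_\F(\P,l)|$ to an upper bound on a pair--counting (``energy'') quantity, and then to control that quantity on average over $l\in\L$ by recognizing it as an incidence count to which Theorem~\ref{dl2} (equivalently Corollary~\ref{benben} with $d=2$, $k=1$) applies. For a non-degenerate line $l\colon ax+by+c=0$ write $w_l(p)=ax_p+by_p+c$, so that $d(p,l)=w_l(p)^2/(a^2+b^2)$ and hence $|\Delta_\F(\P,l)|=\bigl|\{w_l(p)^2:p\in\P\}\bigr|$. Setting
\[
E(l):=\#\{(p,p')\in\P\times\P:\ d(p,l)=d(p',l)\},
\]
Cauchy--Schwarz gives $|\Delta_\F(\P,l)|\ge |\P|^2/E(l)$. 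Thus it suffices to show $E(l)\le(2+o(1))|\P|^2/q$ for all but $o(|\L|)$ lines $l\in\L$, since then $|\Delta_\F(\P,l)|\ge|\P|^2/E(l)\gtrsim q$ on the good lines.

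Next I would bound the aggregate $\sum_{l\in\L}E(l)$. The key is the factorization
\[
d(p,l)=d(p',l)\ \Longleftrightarrow\ w_l(p)=w_l(p')\ \text{ or }\ w_l(p)=-w_l(p'),
\]
i.e.\ either $a(x_p-x_{p'})+b(y_p-y_{p'})=0$ (the pair is parallel to $l$) or $a(x_p+x_{p'})+b(y_p+y_{p'})+2c=0$ (the midpoint of the pair lies on $l$); the two events coincide exactly when both points lie on $l$. Inclusion--exclusion gives $E(l)=|A_l|+|B_l|-|\P\cap l|^2$, where $A_l,B_l$ are the two counts. Now regard each line $l$ as the point $(a,b,c)\in\F^3$ and each ordered pair $(p,p')$ as defining a hyperplane through the origin in this $\F^3$; then $\sum_{l\in\L}|A_l|$ and $\sum_{l\in\L}|B_l|$ are point--flat incidence counts between the $|\L|$ parameter points and the $|\P|^2$ hyperplanes. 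Applying Corollary~\ref{benben} (with $d=2$, $k=1$, so main denominator $q^k=q$ and error exponent $q^{dk/2}=q$) to each family extracts the leading behaviour: each contributes $|\P|^2|\L|/q$, so
\[
\sum_{l\in\L}E(l)=\frac{2|\P|^2|\L|}{q}+\mathrm{Err},
\]
the term $\mathrm{Err}$ collecting the incidence error together with the diagonal and the overlap $\sum_{l}|\P\cap l|^2$, the last two handled by Theorem~\ref{vinh-pl}.

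To pass from the average to almost all lines I would use a second--moment (Chebyshev) argument. Since the attained distances are squares, at most $(q+1)/2$ values occur, so a line with $|\Delta_\F(\P,l)|<\alpha q$ leaves at least $(1/2-\alpha)q$ admissible values unattained; writing $\nu_l(\lambda)=\#\{p:d(p,l)=\lambda\}$, this forces $\sum_\lambda\bigl(\nu_l(\lambda)-2|\P|/(q+1)\bigr)^2\gtrsim(1/2-\alpha)|\P|^2/q$. Summing this over the bad lines and comparing with $\sum_{l\in\L}\sum_\lambda\bigl(\nu_l(\lambda)-2|\P|/(q+1)\bigr)^2=\sum_{l}E(l)-\tfrac{2|\P|^2|\L|}{q+1}=O(\mathrm{Err})$ bounds the number of bad lines by $O\!\bigl(q\,\mathrm{Err}/((1/2-\alpha)|\P|^2)\bigr)$. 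Choosing $\alpha=1-c^2$ (whence $1/2-(1-c^2)>0$ by hypothesis) and requiring this to be $o(|\L|)$ forces $\mathrm{Err}\ll(1/2-(1-c^2))|\P|^2|\L|/q$; since the dominant incidence error is of order $q|\P|\sqrt{|\L|}$, in the balanced regime $|\P|\asymp|\L|$ this is precisely $|\P||\L|\gg q^{8/3}$, and tracking the numerical constants yields the stated factor $\tfrac{4(1-c^2)}{(1/2-(1-c^2))^2}$.

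The main obstacle is the ``parallel'' count $\sum_{l}|A_l|$. Its hyperplanes $a(x_p-x_{p'})+b(y_p-y_{p'})=0$ are independent of $c$, hence not graphs over the $(a,b)$--plane, so they do not immediately fit the graph form required by Theorem~\ref{dl2}; one must split the pairs according to whether $x_p\ne x_{p'}$ or $y_p\ne y_{p'}$ and re-coordinatize $\F^3$ so that $a$ (resp.\ $b$) is the dependent variable before applying the incidence bound. More seriously, this count equals $\sum_{v}\L_v\sum_{m\parallel v}|\P\cap m|^2$, where $\L_v$ is the number of lines of $\L$ in direction $v$, and so measures the joint concentration of $\P$ and $\L$ along common directions. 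Showing that it does not exceed $(1+o(1))|\P|^2|\L|/q$ is exactly where the full force of the cardinality hypothesis is needed and where the sharpness of the exponent $8/3$ resides; I expect this to be the delicate part of the argument, being the step most sensitive to how $\P$ and $\L$ distribute among directions.
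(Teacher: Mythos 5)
Your route---Cauchy--Schwarz energy $|\Delta_{\F}(\P,l)|\ge |\P|^2/E(l)$, incidence bounds for the two branches of $d(p,l)=d(p',l)$, then a second-moment argument---is genuinely different from the paper's, which instead builds the bipartite distance graph on quadruples $(a,b,c,\lambda)$, bounds its third eigenvalue by $2q^{4/3}$ via walk counts, and applies Lemma \ref{expander}. But the step you flag as ``the delicate part'' is not a technical difficulty: it is a genuine obstruction, and the scenario you describe refutes Theorem \ref{thm1} itself. Concretely, let $K=\lceil 16q^{2/3}\rceil$ and take
\[
\P=\{(x,y_i)\colon x\in\F,\ 1\le i\le K\},\qquad \L=\{\,y=t\colon t\in\F\,\},
\]
with $y_1,\dots,y_K$ distinct. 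These lines are non-degenerate, and $|\P||\L|=Kq^2\ge 16q^{8/3}$ satisfies the hypothesis for every admissible $c$, since $4t/(1/2-t)^2$ increases to $16$ as $t=1-c^2\uparrow 1/4$. Yet $d\bigl((x,y_i),\{y=t\}\bigr)=(y_i-t)^2$ does not depend on $x$, so $|\Delta_{\F}(\P,l)|\le K=O(q^{2/3})=o(q)$ for \emph{every} $l\in\L$: not a single line satisfies the conclusion. In your notation, every pair of points sharing a horizontal line is equidistant from every $l\in\L$, so $E(l)\ge Kq^2$ and $\sum_{l}E(l)\ge Kq^3$, which exceeds $|\P|^2|\L|/q=K^2q^2$ by the factor $q/K\approx q^{1/3}$; no Chebyshev argument can recover this, because Cauchy--Schwarz is tight here: $|\Delta_{\F}(\P,l)|\asymp |\P|^2/E(l)\asymp K$. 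So the gap in your proposal cannot be closed; any correct statement must carry an additional hypothesis (or an extra term in the bound) controlling exactly the quantity $\sum_v \L_v\sum_{m\parallel v}|\P\cap m|^2$ that you isolated.

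For comparison, the paper's own proof founders on the same rock, hidden inside the walk-counting. Lemma \ref{one} asserts that $(a,b,c,\lambda)$ and $(ka,kb,f,\beta)$ with $f\ne kc$ always have exactly $q$ common neighbours, but equation (\ref{aa}) in its proof is only a \emph{necessary} condition obtained by subtraction: the common neighbourhood is the intersection of the single line (\ref{aa}) with the pair of parallel lines $ax+by+c=\pm m_1$, hence has $q$ elements or none. (Over $\mathbb{F}_5$, the vertices $(0,1,0,1)$ and $(0,1,1,1)$ have neighbour sets $\{y=1\}\cup\{y=4\}$ and $\{y=0\}\cup\{y=3\}$, which are disjoint.) This overcount propagates into Lemmas \ref{nonajd} and \ref{adjacent}---summing their values over all $(z,t)\in B$ gives $8q^2|S|+q^2(q-1)(q-3)^2$ walks of length three from a fixed vertex of $A$, whereas the true total is $\deg(A)^2\deg(B)=8q^2|S|$, with $S$ as in Lemma \ref{hai}---and hence into Theorem \ref{hello}: indeed, feeding the configuration above into Lemma \ref{expander} forces $\lambda_3\ge q^{3/2}/4$, not $O(q^{4/3})$. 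So your instinct about where the difficulty lives, and about its sensitivity to how $\P$ and $\L$ distribute among directions, was exactly right; it is fatal both to your argument and to the paper's.
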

Combining a finite field variant of Beck's theorem (which can be found in \cite{lund}) with Theorem \ref{thm1}, we obtain the following bound on the number of distinct distances between a set of points and their spanned lines.
\begin{corollary}\label{co1}
Let $\P$ be a set of points in $\F^2$ with $|\P|\ge 3q$, and let $\L$ be the set of lines spanned by $\P$ in $\F^2$.
Then there exists a subset $\L'$ of $\L$ with $|\L'|=(1-o(1))|\L|$, so that $|\Delta_{\F}(\P, l)|\gtrsim q$, for each line $l$ in $\L'$.
\end{corollary}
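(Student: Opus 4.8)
The plan is to combine the finite field analogue of Beck's theorem from \cite{lund} with Theorem \ref{thm1}, after first discarding the few degenerate lines to which Theorem \ref{thm1} does not apply.

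First I would invoke the result of Lund and Saraf \cite{lund}: since $|\P|\ge 3q$, the set $\L$ of lines spanned by $\P$ satisfies $|\L|\gtrsim q^2$. Next I separate out the degenerate lines, since these are exactly the ones for which the point-line distance $d(p,l)=(ax_p+by_p+c)^2/(a^2+b^2)$ is undefined, and hence the ones excluded from Theorem \ref{thm1}. Recall that $ax+by+c=0$ is degenerate precisely when $a^2+b^2=0$. If $-1$ is a non-square in $\F$, there are no degenerate lines at all. If $-1=i^2$ is a square, then $a^2+b^2=0$ forces the direction $(a:b)$ to equal one of the two values $(\pm i:1)$, and each direction accounts for exactly $q$ parallel lines, so there are at most $2q$ degenerate lines in total. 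Writing $\L_0\subseteq\L$ for the set of non-degenerate lines spanned by $\P$, we conclude $|\L_0|\ge |\L|-2q=(1-o(1))|\L|$, using $|\L|\gtrsim q^2$.

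Finally I would apply Theorem \ref{thm1} to the pair $(\P,\L_0)$ with a fixed constant $c$ satisfying $1-c^2<1/4$. The hypothesis of Theorem \ref{thm1} demands $|\P||\L_0|\ge \frac{4(1-c^2)}{(1/2-(1-c^2))^2}q^{8/3}$, and this holds for all sufficiently large $q$ because $|\P||\L_0|\gtrsim q\cdot q^2=q^3\gg q^{8/3}$. Theorem \ref{thm1} then produces a subset $\L'\subseteq\L_0$ with $|\L'|=(1-o(1))|\L_0|$ and $|\Delta_{\F}(\P,l)|\gtrsim q$ for every $l\in\L'$. Composing the two $(1-o(1))$ factors gives $|\L'|=(1-o(1))|\L|$, and since $\L'\subseteq\L$ this is exactly the claimed conclusion.

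The argument is essentially a clean reduction, so I do not expect any serious obstacle. The only steps requiring genuine care are the bookkeeping for the degenerate lines, that is, verifying that their count is truly $O(q)$ and therefore negligible against $|\L|\gtrsim q^2$, and confirming that the polynomial slack between $q^3$ and $q^{8/3}$ absorbs the constant coming from Theorem \ref{thm1} once $q$ is large.
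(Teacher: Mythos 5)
Your proof is correct and takes essentially the same approach as the paper: the paper's own (very terse) proof of Corollary \ref{co1} likewise combines the Lund--Saraf result (Theorem \ref{thm4}) with Theorem \ref{thm1} after noting that the number of degenerate lines is at most $2q$. Your write-up merely fills in the bookkeeping that the paper leaves to the reader, namely the count of degenerate lines and the verification that $|\P||\L_0|\gtrsim q^3\gg q^{8/3}$ satisfies the hypothesis of Theorem \ref{thm1}.
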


By similar arguments as in the proof of Theorem \ref{thm1}, we obtain a similar result on the number of distinct distances between points and hyperplanes in $d$-dimensional vector space over finite fields as follows.
\begin{theorem}\label{dplthm1}
Let $\P$ be a set of points in $\F^d$, and $\HH$ be a set of non-degenerate hyperplanes in $\F^d$, such that
\[|\P||\HH|\ge \frac{4(1-c^2)}{(1/2-(1-c^2))^2}q^{4d/3},\]
with $1-c^2<1/4$. Then there exists a subset $\HH'$ of $\HH$ with $|\HH'|=(1-o(1))|\HH|$, so that $|\Delta_{\F}(\P, h)|\gtrsim q$, for each line $h$ in $\HH'$.
\end{theorem}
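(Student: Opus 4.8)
The plan is to run the argument behind Theorem \ref{thm1} one dimension higher, replacing the plane $\F^2$ by $\F^d$ and the line parameter space by the hyperplane parameter space $\F^{d+1}$. Fix a non-degenerate hyperplane $h\in\HH$ with normal $\mathbf a=(a_1,\ldots,a_d)$ and offset $a_{d+1}$, and for $t\in\F$ write $f_h(t)=|\{\mathbf p\in\P : d(\mathbf p,h)=t\}|$. Since $\sum_t f_h(t)=|\P|$, Cauchy--Schwarz gives
\[|\Delta_\F(\P,h)|\ \ge\ \frac{|\P|^2}{\sum_t f_h(t)^2},\]
so it suffices to show that $\sum_t f_h(t)^2\lesssim |\P|^2/q$ for all but a $o(1)$-fraction of the hyperplanes $h\in\HH$.

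The key reduction is the linearization of the relation $d(\mathbf p,h)=d(\mathbf p',h)$. Because $a_1^2+\cdots+a_d^2\neq 0$ for a non-degenerate $h$, the denominators cancel and this relation is equivalent to $(\mathbf a\cdot\mathbf p+a_{d+1})^2=(\mathbf a\cdot\mathbf p'+a_{d+1})^2$, that is, to one of the two linear equations $\mathbf a\cdot(\mathbf p-\mathbf p')=0$ or $\mathbf a\cdot(\mathbf p+\mathbf p')+2a_{d+1}=0$ in the parameter vector $(a_1,\ldots,a_{d+1})\in\F^{d+1}$. Thus $\sum_t f_h(t)^2=|\{(\mathbf p,\mathbf p')\in\P^2 : d(\mathbf p,h)=d(\mathbf p',h)\}|$, and each ordered pair $\mathbf p\neq\mathbf p'$ determines two hyperplanes through the origin of $\F^{d+1}$ on which the parameter point of $h$ must lie. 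Summing over $h$, the diagonal $\mathbf p=\mathbf p'$ contributes $|\P||\HH|$, and the off-diagonal part is exactly the number of incidences in $\F^{d+1}$ between the $|\HH|$ parameter points of the hyperplanes in $\HH$ and the family of at most $2|\P|^2$ hyperplanes produced by the pairs of $\P$.

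Next I would feed this into the incidence bound. The conditions cut out codimension-one flats in the parameter space $\F^{d+1}$, so Corollary \ref{benben} (equivalently Theorem \ref{hello11}) applies with error exponent $q^{d/2}$ and main term $1/q$, giving
\[\sum_{h\in\HH}\sum_t f_h(t)^2\ \le\ |\P||\HH|+\frac{2|\P|^2|\HH|}{q}+2q^{d/2}|\P|\sqrt{|\HH|}.\]
The heart of the matter is then a Markov-type argument: declaring $h$ to be \emph{bad} when $\sum_t f_h(t)^2$ exceeds a fixed multiple (controlled by $c$) of $|\P|^2/q$, the number of bad hyperplanes is at most the displayed total divided by that threshold, and under the quantitative hypothesis on $|\P||\HH|$ this is smaller than $c|\HH|$. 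Every good $h$ then satisfies $|\Delta_\F(\P,h)|\gtrsim q$, and taking $\HH'$ to be the good hyperplanes finishes the proof; for $d=2$ the balance point of the error term against the target $|\P|^2/q$ reproduces exactly the exponent $q^{8/3}$ of Theorem \ref{thm1}.

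I expect the main obstacle to be twofold. First, the two parameter hyperplanes attached to each pair are homogeneous, so the entire family passes through the origin of $\F^{d+1}$; this is a concurrent, highly degenerate configuration, and I would need either to verify that the spectral incidence bound is insensitive to it, or to excise the origin together with the degenerate locus $a_1^2+\cdots+a_d^2=0$ before invoking Corollary \ref{benben}. Second, and more delicate, is the passage from the average bound to a statement about a $(1-o(1))$-fraction of $\HH$: because the main term $|\P|^2|\HH|/q$ is itself of the same order as the target, the argument has slack only through the precise constant in the hypothesis, which is what forces the restriction $1-c^2<1/4$ together with the explicit constant $4(1-c^2)/(1/2-(1-c^2))^2$. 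Getting these constants to close is the genuine bookkeeping cost of the proof, while the algebraic linearization and the incidence estimate are routine once set up as above.
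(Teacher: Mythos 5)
You take a genuinely different route from the paper, and it contains two genuine gaps. The paper proves Theorem \ref{dplthm1} by running, in $\F^d$, the proof of Theorem \ref{thm1}: it builds the bespoke point--hyperplane distance graph on tuples $(a_1,\ldots,a_{d+1},\lambda)$ with $\lambda(a_1^2+\cdots+a_d^2)$ a nonzero square, counts common neighbours and walks of length three to obtain an identity of the form $M^3=\alpha K+\beta M+\gamma A_{\mathcal{IN}}$, deduces a third-eigenvalue bound, and then plays the expander mixing lemma (Lemma \ref{expander}) against the lower bound $e(D,\P)\ge |\P||\HH|/2$, finishing with a pigeonhole on $\sum_h|\Delta_\F(\P,h)|$. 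Your mechanism (Cauchy--Schwarz on the distance energy, the exact factorization $d(\p,h)=d(\p',h)\iff \mathbf{a}\cdot(\p-\p')=0$ or $\mathbf{a}\cdot(\p+\p')+2a_{d+1}=0$, then Corollary \ref{benben} in the parameter space $\F^{d+1}$) is different, and in one respect sounder: your factorization is exact, whereas the analogous step in the paper (Lemma \ref{one}) subtracts the two quadratic equations and treats the resulting linear equation as equivalent to the system. The first gap is in your incidence step: the family of hyperplanes produced by the pairs of $\P$ is a \emph{multiset}, and Corollary \ref{benben} applies to sets. All pairs whose difference $\p-\p'$ points in a fixed direction give the \emph{same} hyperplane $\mathbf{a}\cdot(\p-\p')=0$ (note it does not involve $a_{d+1}$ at all), and all pairs with the same sum give the same reflection hyperplane; for $\P$ with collinear or additive structure a single hyperplane occurs with multiplicity of order $|\P|$, even $|\P|^2$. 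The energy is the weighted count $\sum_v m_v\,|\{h\in\HH:\ \mathrm{param}(h)\in v\}|$, and any correct weighted or layered form of the spectral bound carries the error term $q^{d/2}\sqrt{|\HH|\sum_v m_v^2}$, which exceeds your $2q^{d/2}|\P|\sqrt{|\HH|}$ by a factor up to $\sqrt{\mu_{\max}}$; so your displayed inequality is unjustified, and it genuinely degrades for structured $\P$. (The obstacle you did worry about, concurrency through the origin and the isotropic cone, is harmless: the spectral incidence bound is insensitive to concurrency, and non-degeneracy of $\HH$ keeps its parameter points off the cone automatically.)

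The second gap is the Markov step, and no bookkeeping of constants can close it. Each pair imposes two linear conditions, so the main term of the average energy is about $2|\P|^2/q$, which is the same order as the threshold $C|\P|^2/q$ you need in order to get $|\Delta_\F(\P,h)|\ge q/C$ from Cauchy--Schwarz. Markov then only gives that at most roughly a $2/C$ fraction of $\HH$ is bad: a fixed positive proportion for fixed $C$, while letting $C\to\infty$ makes the bad fraction $o(1)$ but weakens the conclusion to $|\Delta|\ge q/C=o(q)$. Hence your argument can only ever yield a positive-proportion statement, never the claimed $(1-o(1))|\HH|$; to go further one would have to, say, apply the energy estimate to the bad set alone rather than average over all of $\HH$. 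There is also a range problem: your error term is dominated by the main term only when $|\P|^2|\HH|\gtrsim q^{d+2}$, which does not follow from the hypothesis $|\P||\HH|\gtrsim q^{4d/3}$ when $|\P|$ is much smaller than $|\HH|$. In fairness, both defects have counterparts in the paper itself: its pigeonhole likewise delivers only $|\HH'|\ge(1-c)|\HH|$ with $c>\sqrt{3}/2$ rather than $(1-o(1))|\HH|$, and already for $d=2$ the statement fails in the small-$|\P|$ regime (take $|\P|=q^{2/3}$ and $\L$ all non-degenerate lines: the hypothesis holds with $1-c^2$ small, yet $|\Delta_\F(\P,l)|\le|\P|=o(q)$ for every $l$). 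But inside your own write-up, the two items to repair are the multiplicity-aware incidence bound and the conversion from an average bound to a $(1-o(1))$ statement.
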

\section{Tools}
This section contains a couple of notions and theorems that we use as tools in the proofs of our main results.
We fist state the well-known Schwartz-Zippel Lemma (for proof refer to Theorem $6.13$ in \cite{li}).
\begin{lemma}[\textbf{Schwartz-Zippel}]\label{sch} Let $P(\tmmathbf{x})$ be a non-zero polynomial of degree $k$. Then
\[\left\vert \{\tmmathbf{x}\in \mathbb{F}_q^d: P(\tmmathbf{x})=0\}\right\vert\le kq^{d-1}.\]
\end{lemma}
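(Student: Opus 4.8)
The plan is to prove the bound by induction on the number of variables $d$, reducing the multivariate statement to the familiar one-variable fact that a nonzero polynomial of degree $k$ over a field has at most $k$ roots. For the base case $d=1$, a nonzero polynomial $P(x_1)$ of degree $k$ has at most $k$ roots in $\mathbb{F}_q$ (each root yields a distinct linear factor), so its zero set has size at most $k = kq^{0} = kq^{d-1}$, as claimed.

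For the inductive step, suppose the claim holds in $d-1$ variables, and let $P\in\mathbb{F}_q[x_1,\ldots,x_d]$ be nonzero of total degree $k$. I would regard $P$ as a polynomial in the single variable $x_d$ with coefficients in $\mathbb{F}_q[x_1,\ldots,x_{d-1}]$, writing
\[P=\sum_{i=0}^{m}c_i(x_1,\ldots,x_{d-1})\,x_d^{i},\]
where $c_m\not\equiv 0$ and $m\le k$; observe that $\deg c_m\le k-m$. The idea is to split the count of zeros of $P$ according to whether the leading coefficient $c_m$ vanishes. On the locus where $c_m(x_1,\ldots,x_{d-1})\ne 0$, the specialized one-variable polynomial in $x_d$ is nonzero of degree exactly $m$, hence has at most $m$ roots; since there are at most $q^{d-1}$ choices of $(x_1,\ldots,x_{d-1})$, this locus contributes at most $m\,q^{d-1}$ zeros. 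On the complementary locus where $c_m$ vanishes, the induction hypothesis applied to $c_m$ (nonzero of degree at most $k-m$ in $d-1$ variables) bounds the number of such tuples $(x_1,\ldots,x_{d-1})$ by $(k-m)q^{d-2}$, and there I use only the trivial bound of $q$ for the admissible values of $x_d$, contributing at most $(k-m)q^{d-2}\cdot q=(k-m)q^{d-1}$. Adding the two contributions gives $m\,q^{d-1}+(k-m)q^{d-1}=k\,q^{d-1}$, which closes the induction.

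The only delicate point is precisely the locus where the leading coefficient $c_m$ vanishes: there the specialization of $P$ in $x_d$ may drop in degree or even become identically zero, so the root count in $x_d$ cannot be controlled and one must retreat to the trivial bound $q$. What makes this harmless is that $c_m$ is itself a nonzero polynomial of degree at most $k-m$, so the induction hypothesis already forces its zero locus to be small; the two worst cases then combine to exactly the claimed $kq^{d-1}$. The remaining checks are routine bookkeeping, namely that $m\le k$ and $\deg c_m\le k-m$, so that the total degree budget $k$ is split consistently between the two halves of the argument.
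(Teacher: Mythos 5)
Your proof is correct and complete. Note that the paper itself does not prove this lemma at all---it defers to Theorem $6.13$ of Lidl and Niederreiter \cite{li}---and your induction on the number of variables, splitting the zero set according to whether the leading coefficient $c_m$ of $P$ viewed as a polynomial in $x_d$ vanishes, is precisely the standard textbook argument found in that reference, with the degree bookkeeping ($m\le k$ and $\deg c_m\le k-m$) handled correctly. The edge cases also check out: if $m=k$ then $c_m$ is a nonzero constant and the second locus is empty, while if $m=0$ the first locus contributes nothing and the whole count is absorbed by the inductive bound on $c_0$, so the argument closes in all cases.
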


We say that a bipartite graph is \emph{biregular} if in both of its two parts, all vertices have the same degree. If $A$ is one of the two parts of a bipartite graph, we write $\deg(A)$ for the common degree of the vertices in $A$.
Label the eigenvalues so that $|\lambda_1|\geq |\lambda_2|\geq \cdots \geq |\lambda_n|$.
Note that in a bipartite graph, we have $\lambda_2 = -\lambda_1$.
The following variant of the expander mixing lemma is proved in \cite{eustis}. We include the proof of this result for the sake of completeness of the paper.
\begin{lemma}[\textbf{Expander mixing lemma}]\label{expander} Let $G$ be a bipartite graph with parts $A, B$ such that the vertices in $A$ all have degree $a$ and the vertices in $B$ all have degree $b$. Then, for any two sets $X\subset A$ and $Y\subset B$, the number of edges between $X$ and $Y$, denoted by $e(X,Y)$, satisfies
\[\left\vert e(X,Y)-\frac{a}{|B|}|X||Y|\right\vert\le \lambda_3\sqrt{|X||Y|},\] where $\lambda_3$ is the third eigenvalue of $G$.
\end{lemma}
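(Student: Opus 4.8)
The plan is to prove this by a standard spectral argument: expand the indicator vectors of $X$ and $Y$ in an orthonormal eigenbasis of the adjacency matrix and isolate the contributions of the two largest eigenvalues. First I would let $M$ denote the adjacency matrix of $G$, a symmetric $n\times n$ matrix with $n=|A|+|B|$ and bipartite block structure $\left(\begin{smallmatrix} 0 & N \\ N^{\top} & 0\end{smallmatrix}\right)$. Writing $\mathbf{1}_X,\mathbf{1}_Y$ for the indicator vectors of $X\subseteq A$ and $Y\subseteq B$ (viewed as vectors in $\mathbb{R}^n$), the block structure gives $e(X,Y)=\mathbf{1}_X^{\top}M\mathbf{1}_Y$. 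Since $M$ is symmetric, it admits an orthonormal eigenbasis $v_1,\ldots,v_n$ with real eigenvalues ordered by $|\lambda_1|\ge\cdots\ge|\lambda_n|$.

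The key computational step is to determine $v_1$ and $v_2$ explicitly. Using biregularity one checks that a vector taking the constant value $\alpha$ on $A$ and $\beta$ on $B$ is an eigenvector precisely when $a\beta=\lambda\alpha$ and $b\alpha=\lambda\beta$, forcing $\lambda=\pm\sqrt{ab}$; this recovers $\lambda_1=\sqrt{ab}$ and $\lambda_2=-\sqrt{ab}$, with eigenvectors proportional to $(\sqrt{a}\text{ on }A,\,\pm\sqrt{b}\text{ on }B)$. Double counting edges yields the identity $a|A|=b|B|=:m$, which I would use both to normalize these two eigenvectors (each has squared norm $2m$) and, crucially, to simplify the main term at the end.

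Next I would write $\mathbf{1}_X=\sum_i c_i v_i$ and $\mathbf{1}_Y=\sum_i d_i v_i$ with $c_i=\langle\mathbf{1}_X,v_i\rangle$ and $d_i=\langle\mathbf{1}_Y,v_i\rangle$, so that $e(X,Y)=\sum_i\lambda_i c_i d_i$. Because $X\subseteq A$ and $Y\subseteq B$, only the $A$-coordinates of $v_1,v_2$ contribute to $c_1,c_2$ and only their $B$-coordinates to $d_1,d_2$; a direct calculation gives $c_1=c_2=|X|\sqrt{a}/\sqrt{2m}$ and $d_1=-d_2=|Y|\sqrt{b}/\sqrt{2m}$. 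Substituting and using $\lambda_1=-\lambda_2=\sqrt{ab}$ together with $m=b|B|$, the two leading terms combine to exactly $\lambda_1 c_1 d_1+\lambda_2 c_2 d_2=ab|X||Y|/m=a|X||Y|/|B|$, which is the claimed main term.

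Finally I would bound the tail $\sum_{i\ge 3}\lambda_i c_i d_i$. Since the eigenvalues are ordered by absolute value, $|\lambda_i|\le|\lambda_3|$ for every $i\ge 3$, and by the Cauchy--Schwarz inequality together with Parseval's identity $\sum_i c_i^2=\|\mathbf{1}_X\|^2=|X|$ and $\sum_i d_i^2=|Y|$, I obtain $\bigl|\sum_{i\ge 3}\lambda_i c_i d_i\bigr|\le|\lambda_3|\sum_{i\ge 3}|c_i||d_i|\le|\lambda_3|\sqrt{|X||Y|}$, which is precisely the asserted inequality (with $\lambda_3$ read as the third eigenvalue in absolute value). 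The only genuinely delicate point is the identification and normalization of the top two eigenvectors and the verification that their combined contribution collapses exactly to $\frac{a}{|B|}|X||Y|$; this is where biregularity, through the identity $a|A|=b|B|$, does the real work, while the remaining steps are routine linear algebra.
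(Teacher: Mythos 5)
Your proof is correct and is essentially the same spectral argument as the paper's: both identify the top eigenvalues $\pm\sqrt{ab}$ with eigenvectors $\sqrt{a}\mathbf{1}_A\pm\sqrt{b}\mathbf{1}_B$, extract the main term $\frac{a}{|B|}|X||Y|$ from them, and control the remainder by $\lambda_3$ via Cauchy--Schwarz. The only difference is bookkeeping: you expand the indicator vectors in an orthonormal eigenbasis and use Parseval on the tail, whereas the paper phrases the identical computation as a projection onto the orthogonal complement of the top two eigenvectors together with the all-ones block matrix $K$ (which also yields the slightly sharper factor $\sqrt{(1-|X|/|A|)(1-|Y|/|B|)}$ that your Parseval step could recover by keeping the $c_1,c_2,d_1,d_2$ terms).
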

\begin{proof}
We assume that the vertices of $G$ are labeled from $1$ to $|A|+|B|$, and we denote by $M$ the adjacency matrix of $G$ having the form
$$
M=\begin{bmatrix}
0&N\\
N^t&0
\end{bmatrix},
$$ where $N$ is the $|A|\times|B|$ $0-1$ matrix, with $N_{ij}=1$ if and only if there is an edge between $i$ and $j$. First, let us recall some properties of the eigenvalues of the matrix $M$. Since all vertices in $A$ have degree $a$ and all vertices in $B$ have degree $b$, all eigenvalues of $M$ are bounded by $\sqrt{ab}$. Indeed, let us denote the $L_1$ vector norm by $||\cdot||_1$, and let $\mathbf{e}_v$ be the unit vector having $1$ in the position corresponding to vertex $v$ and zeroes elsewhere. One can observe that $||M^2\cdot \mathbf{e}_v||_1\le ab$, so the absolute value of each eigenvalue of $M$ is bounded by $\sqrt{ab}$. Let $\textbf{1}_X$ denote the column vector of size $|A|+|B|$ having $1\textrm{s}$ in the positions corresponding to the set of vertices $X$ and $0\textrm{s}$ elsewhere. Then, we have that
\[M(\sqrt{a}\mathbf{1}_A+\sqrt{b}\mathbf{1}_B)=b\sqrt{a}\mathbf{1}_B+a\sqrt{b}\mathbf{1}_A=\sqrt{ab}(\sqrt{a}\mathbf{1}_A+\sqrt{b}\mathbf{1}_B),\]
\[M(\sqrt{a}\mathbf{1}_A-\sqrt{b}\mathbf{1}_B)=b\sqrt{a}\mathbf{1}_B-a\sqrt{b}\mathbf{1}_A=-\sqrt{ab}(\sqrt{a}\mathbf{1}_A-\sqrt{b}\mathbf{1}_B),\]
which implies that $\lambda_1=\sqrt{ab}$ and $\lambda_2=-\sqrt{ab}$ are the first and second eigenvalues, corresponding to the eigenvectors $(\sqrt{a}\mathbf{1}_A+\sqrt{b}\mathbf{1}_B)$ and $(\sqrt{a}\mathbf{1}_A-\sqrt{b}\mathbf{1}_B)$.

Let $W^\perp$ be a subspace spanned by the vectors $\textbf{1}_A$ and $\textbf{1}_B$. Since $M$ is a symmetric matrix, the eigenvectors of $M$, except $\sqrt{a}\textbf{1}_A+\sqrt{b}\textbf{1}_B$ and $\sqrt{a}\textbf{1}_A-\sqrt{b}\textbf{1}_B$, span $W$. Therefore, for any $u\in W$,  $Mu\in W$, and $||Mu||\le \lambda_3||u||$. Let us now remark the following facts:
\begin{itemize}
\item[1.] Let $K$ be a matrix of the form $\begin{bmatrix}
0&J\\
J&0
\end{bmatrix},$ where $J$ is the $|A|\times |B|$ all-ones matrix. If $u\in W$, then $Ku=0$ since every row of $K$ is either $\textbf{1}_A^T$ or $\textbf{1}_B^T$.
\item[2.] If $w\in W^\perp$, then $(M-(a/|B|)K)w=0.$ Indeed, it follows from the facts that $a|A|=b|B|$, and $M\textbf{1}_A=b\textbf{1}_B=(a/|B|)K\textbf{1}_A$, $M\textbf{1}_B=a\textbf{1}_A=(a/|B|)K\textbf{1}_B$.
\end{itemize}

Since $e(X,Y)=\textbf{1}_Y^TM\textbf{1}_X$ and $|X||Y|=\textbf{1}_Y^TK\textbf{1}_X$, \[\left|e(X,Y)-\frac{a}{|B|}|X||Y|\right|=\left|\textbf{1}_Y^T(M-\frac{a}{|B|}K)\textbf{1}_X\right|.\]
For any vector $v$, let $\bar{v}$ be the orthogonal projection onto $W$, so that $\overline{v}\in W$, and $v-\overline{v}\in W^\perp$. Thus \[\textbf{1}_Y^T(M-\frac{a}{|B|}K)\textbf{1}_X=\textbf{1}_Y^T(M-\frac{a}{|B|}K)\overline{\textbf{1}_X}=\textbf{1}_Y^TM\overline{\textbf{1}_X}=\overline{\textbf{1}_Y}^T M \overline{\textbf{1}_X}, \;\text{so}\]
\[\left|e(X,Y)-\frac{a}{|B|}|X||Y|\right|\le \lambda_3||\overline{\textbf{1}_X}||~||\overline{\textbf{1}_Y}||.\]

Since \[ \overline{\textbf{1}_X}=\textbf{1}_X-\left((\textbf{1}_X\cdot \textbf{1}_A)/(\textbf{1}_A\cdot \textbf{1}_A)\right)\textbf{1}_A=\textbf{1}_X-(|X|/|A|)\textbf{1}_A,\] we have $||\overline{\textbf{1}_X}||=\sqrt{|X|(1-|X|/|A|)}$. Similarly, $||\overline{\textbf{1}_Y}||=\sqrt{|Y|(1-|Y|/|B|)}$.

In other words,
\[\left|e(X,Y)-\frac{a}{|B|}|X||Y|\right|\le \lambda_3\sqrt{|X||Y|(1-|X|/|A|)(1-|Y|/|B|)},\]
which completes the proof of the lemma.\end{proof}
\section{Proofs of Theorems \ref{dl2}, \ref{dl1}, and Corollary \ref{hello12}}

We start by proving the following lemma.

\begin{lemma}\label{bd1}
For any two $k$-tuples of vectors  $(\mathbf{a}_1,\ldots,\mathbf{a}_k)\ne (\mathbf{c}_1,\ldots,\mathbf{c}_k)$, we have $V_{\mathbf{a}_1,\ldots,\mathbf{a}_k}\ne V_{\mathbf{c}_1,\ldots,\mathbf{c}_k}$.
\end{lemma}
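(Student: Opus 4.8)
The plan is to use that each $V_{\mathbf{a}_1,\ldots,\mathbf{a}_k}$ is, as a point set, the graph of a polynomial map over its first $d$ coordinates, which turns the comparison of two such varieties into a comparison of functions on $\mathbb{F}_q^d$. A point $(\mathbf{x},x_{d+1},\ldots,x_{d+k})\in\mathbb{F}_q^{d+k}$ lies in $V_{\mathbf{a}_1,\ldots,\mathbf{a}_k}$ exactly when $x_{d+i}=f_i(\mathbf{x},\mathbf{a}_i)$ for every $1\le i\le k$, so
\[
V_{\mathbf{a}_1,\ldots,\mathbf{a}_k}=\bigl\{\,(\mathbf{x},f_1(\mathbf{x},\mathbf{a}_1),\ldots,f_k(\mathbf{x},\mathbf{a}_k))\ \colon\ \mathbf{x}\in\mathbb{F}_q^d\,\bigr\}.
\]
Since the projection onto the first $d$ coordinates maps this set bijectively onto $\mathbb{F}_q^d$, two such graphs coincide if and only if the underlying maps agree at every $\mathbf{x}$. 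Hence $V_{\mathbf{a}_1,\ldots,\mathbf{a}_k}=V_{\mathbf{c}_1,\ldots,\mathbf{c}_k}$ would force $f_i(\mathbf{x},\mathbf{a}_i)=f_i(\mathbf{x},\mathbf{c}_i)$ for all $\mathbf{x}\in\mathbb{F}_q^d$ and all $i$, and I aim to contradict this.

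Next I would use the hypothesis $(\mathbf{a}_1,\ldots,\mathbf{a}_k)\ne(\mathbf{c}_1,\ldots,\mathbf{c}_k)$ to fix an index $i$ with $\mathbf{a}_i\ne\mathbf{c}_i$. Setting $\delta_j=a_{ij}-c_{ij}$, the common polynomial $h_i$ cancels and the difference function is
\[
f_i(\mathbf{x},\mathbf{a}_i)-f_i(\mathbf{x},\mathbf{c}_i)=\sum_{j=1}^{d}\delta_j x_j^{b_{ij}}+\delta_{d+1},
\]
with $(\delta_1,\ldots,\delta_{d+1})\ne\mathbf{0}$. It then suffices to show this is not the zero function on $\mathbb{F}_q^d$, which contradicts the equality above and proves the lemma.

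For the non-vanishing I would simply evaluate at well-chosen points. Each $b_{ij}$ is a positive integer, so $0^{b_{ij}}=0$ and $1^{b_{ij}}=1$; evaluating at $\mathbf{x}=\mathbf{0}$ yields $\delta_{d+1}$, and evaluating at the $j$-th standard basis vector $\mathbf{e}_j$ yields $\delta_j+\delta_{d+1}$. If $\delta_{d+1}\ne0$ the function is already nonzero at the origin; otherwise some $\delta_j\ne0$ with $1\le j\le d$, and the value at $\mathbf{e}_j$ equals $\delta_j\ne0$. In either case the difference function is nonzero somewhere, as needed.

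I do not expect a genuine obstacle here. The one point deserving care is that over $\mathbb{F}_q$ varieties are compared as point sets, so the argument must be made at the level of functions rather than formal polynomials; the direct evaluation at $\mathbf{0}$ and the $\mathbf{e}_j$ neatly sidesteps any appeal to the degree bound $\deg h_i\le q-1$ or to $\gcd(b_{ij},q-1)=1$ (these hypotheses are instead what make $x\mapsto x^{b_{ij}}$ a bijection of $\mathbb{F}_q$, which is needed later for the incidence estimate). Should one prefer to avoid special evaluations, an equivalent route is to note that this bijectivity forces each monomial $x_j^{b_{ij}}$ to depend genuinely on $x_j$, but the elementary evaluation argument is the shortest.
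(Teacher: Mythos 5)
Your proof is correct. It shares the paper's first move: both arguments exploit the fact that $V_{\mathbf{a}_1,\ldots,\mathbf{a}_k}$ is the graph of the map $\mathbf{x}\mapsto(f_1(\mathbf{x},\mathbf{a}_1),\ldots,f_k(\mathbf{x},\mathbf{a}_k))$ on $\mathbb{F}_q^d$, so that equality of the two varieties as point sets forces $f_i(\mathbf{x},\mathbf{a}_i)=f_i(\mathbf{x},\mathbf{c}_i)$ for every $\mathbf{x}\in\mathbb{F}_q^d$ and reduces the lemma to showing the difference $\sum_{j=1}^d\delta_j x_j^{b_{ij}}+\delta_{d+1}$ is not the zero function. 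Where you diverge is in how that non-vanishing is established. The paper invokes the Schwartz--Zippel lemma (its Lemma 2.1): the difference is a nonzero polynomial of degree at most $q-1$, hence has at most $(q-1)q^{d-1}<q^d$ zeros, contradicting vanishing on all $q^d$ points. You instead evaluate at $\mathbf{0}$ and at the standard basis vectors $\mathbf{e}_j$, exhibiting an explicit point where the difference is nonzero. Your finish is more elementary and slightly more robust: it needs nothing beyond $b_{ij}\ge 1$, whereas the paper's counting argument tacitly requires the \emph{formal} degree of the difference to be at most $q-1$ (i.e.\ $b_{ij}\le q-1$), a hypothesis stated only for the $h_i$, which in any case cancel; for very large exponents Schwartz--Zippel as applied there would be vacuous, while your evaluation still works at the level of functions. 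What the paper's route buys in exchange is economy of exposition --- it is a one-line application of a lemma already set up in its Tools section --- but as a self-contained verification of this particular lemma, your argument is, if anything, tighter.
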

\begin{proof}
Since $(\mathbf{a}_1,\ldots,\mathbf{a}_k)\ne (\mathbf{c}_1,\ldots,\mathbf{c}_k)$, without loss of generality, we can assume that $\mathbf{a}_1\ne \mathbf{c}_1$.  Therefore,
\[f_1(\mathbf{x},\mathbf{a}_1)-f_1(\mathbf{x},\mathbf{c}_1)=(a_{11}-c_{11})x_1^{b_{11}}+\cdots+(a_{1d}-c_{1d})x_d^{b_{1d}}+a_{1(d+1)}-c_{1(d+1)},\]
 is a non-zero polynomial of degree at most $q-1$ in $\mathbb{F}_q[x_1,\ldots,x_d]$.

By Lemma~\ref{sch}, the cardinality of $V(f_1(\mathbf{x},\mathbf{a}_1)-f_1(\mathbf{x},\mathbf{c}_1))$ is at most $(q-1)q^{d-1}<q^d$. Let us observe that if $V_{\mathbf{a}_1,\ldots,\mathbf{a}_k}\equiv V_{\mathbf{c}_1,\ldots,\mathbf{c}_k}$, then $|V(f_1(\mathbf{x},\mathbf{a}_1)-f_1(\mathbf{x},\mathbf{c}_1))|=q^d$. This is indeed the case since each variety contains exactly $q^d$ points in $\mathbb{F}_q^d\times \mathbb{F}_q^k$. Thus, we obtain \[V\left(x_{d+1}-f_1(\mathbf{x},\mathbf{a}_1),\ldots, x_{d+k}-f_k(\mathbf{x},\mathbf{a}_k)\right)\ne V\left(x_{d+1}-f_1(\mathbf{x},\mathbf{c}_1),\ldots, x_{d+k}-f(\mathbf{x},\mathbf{c}_k)\right),\]
which completes the proof of the lemma.
\end{proof}

We define the bipartite graph $G=(A\cup B, E)$ as follows. The first vertex part $A$ is $(\mathbb{F}_q)^{d}\times (\mathbb{F}_q)^k$ and the second vertex part $B$ is the set of all varieties $V_{\mathbf{a_1},\ldots,\mathbf{a_k}}$ with $(\mathbf{a_1},\ldots,\mathbf{a_k})\in \left(\mathbb{F}_q^{d+1}\right)^k$. We draw an edge between a point $\mathbf{p} \in A$ and a variety $\mathbf{v}\in B$ if and only if $\mathbf{p}\in \mathbf{v}$. It is easy to check that $G$ is biregular with $\deg(A)=q^{dk}$ and $\deg(B) = q^d$.

\begin{lemma}\label{eigen1}
Let $\lambda_3$ be the third eigenvalue of the adjacency matrix of $G$. Then $|\lambda_3|\leq q^{dk/2}$.
\end{lemma}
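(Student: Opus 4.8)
The plan is to pass from the adjacency matrix of $G$ to its biadjacency matrix and reduce the bound to a two-point count. Write the adjacency matrix of $G$ as
\[ M = \begin{bmatrix} 0 & N \\ N^T & 0 \end{bmatrix}, \]
where $N$ is the $|A|\times |B|$ biadjacency matrix, with rows indexed by the points of $A$ and columns by the varieties $V_{\mathbf{a}_1,\ldots,\mathbf{a}_k}$ in $B$. As in the proof of Lemma \ref{expander}, the nonzero eigenvalues of $M$ are exactly $\pm$ the singular values of $N$, so that the squares of the eigenvalues of $M$ are, up to additional zeros, the eigenvalues of $NN^T$. Consequently, once we show that $NN^T$ has a simple largest eigenvalue equal to $\deg(A)\deg(B)=q^{dk+d}$ (this accounts for $\lambda_1^2=\lambda_2^2$) and that all of its remaining eigenvalues are at most $q^{dk}$, we conclude $\lambda_3^2\le q^{dk}$, i.e. $|\lambda_3|\le q^{dk/2}$.

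The entries of $NN^T$ are governed by a two-point count: $(NN^T)_{\mathbf{p},\mathbf{p}'}$ is the number of varieties of our family passing through both $\mathbf{p}$ and $\mathbf{p}'$. Writing a point as $\mathbf{p}=(\mathbf{u},\mathbf{w})$ with $\mathbf{u}\in\mathbb{F}_q^d$ and $\mathbf{w}\in\mathbb{F}_q^k$, the incidence $\mathbf{p}\in V_{\mathbf{a}_1,\ldots,\mathbf{a}_k}$ imposes, for each $i$, one affine-linear equation on $\mathbf{a}_i\in\mathbb{F}_q^{d+1}$ with coefficient vector $(u_1^{b_{i1}},\ldots,u_d^{b_{id}},1)$. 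For two points the decisive question is whether the coefficient vectors coming from $\mathbf{u}$ and $\mathbf{u}'$ are equal or independent, and this is exactly where the hypothesis $\gcd(b_{ij},q-1)=1$ enters: it makes $x\mapsto x^{b_{ij}}$ a bijection of $\mathbb{F}_q$, so $u_j^{b_{ij}}=(u'_j)^{b_{ij}}$ for all $j$ if and only if $\mathbf{u}=\mathbf{u}'$; since the last coordinate of each vector equals $1$, distinct coefficient vectors are automatically linearly independent. Counting solutions index by index then yields
\[ (NN^T)_{\mathbf{p},\mathbf{p}'}=\begin{cases} q^{dk}, & \mathbf{p}=\mathbf{p}',\\ q^{dk-k}, & \mathbf{u}\neq \mathbf{u}',\\ 0, & \mathbf{u}=\mathbf{u}',\ \mathbf{w}\neq \mathbf{w}', \end{cases} \]
the zero case being forced by a pair of consistent-left-hand-side but contradictory equations for some coordinate $x_{d+i}$.

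What remains is linear algebra. I would write $NN^T = q^{dk-k}J + C$, where $J$ is the all-ones matrix on $A$ and $C$ is block-diagonal with one $q^k\times q^k$ block $q^{dk-k}(q^k I - J_{q^k})$ for each $\mathbf{u}\in\mathbb{F}_q^d$. The all-ones vector $\mathbf{1}$ is a common eigenvector, with $NN^T\mathbf{1}=q^{dk+d}\mathbf{1}$, and on $\mathbf{1}^{\perp}$ the rank-one term $J$ vanishes, so there $NN^T$ acts as $C$. Since each block $q^{dk-k}(q^k I - J_{q^k})$ has eigenvalues $0$ (once) and $q^{dk}$ (with multiplicity $q^k-1$), the eigenvalues of $NN^T$ on $\mathbf{1}^{\perp}$ are $q^{dk}$ and $0$, while $q^{dk+d}$ is simple. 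Hence the second largest eigenvalue of $NN^T$ is $q^{dk}$, which gives $|\lambda_3|\le q^{dk/2}$.

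The only genuine obstacle is the two-point count, and inside it the verification that the two coefficient vectors are independent precisely when $\mathbf{u}\neq\mathbf{u}'$; this is the step that consumes the arithmetic hypothesis $\gcd(b_{ij},q-1)=1$, and it is what guarantees the crucial uniformity of the off-diagonal entries of $NN^T$. Everything after that is bookkeeping of the eigenvalues of a block matrix, and I do not expect any difficulty there.
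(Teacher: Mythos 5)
Your proof is correct, and it reaches the bound by a genuinely different route than the paper. The combinatorial core is identical: both arguments rest on the count of varieties through a pair of points, namely $q^{dk}$ for a single point, $q^{k(d-1)}$ when the two points differ somewhere in their first $d$ coordinates, and $0$ when they agree there but differ in the last $k$, with the hypothesis $\gcd(b_{ij},q-1)=1$ consumed in exactly the same way (to force $u_j^{b_{ij}}=(u_j')^{b_{ij}}$ if and only if $u_j=u_j'$). The difference is the linear algebra wrapped around this count. The paper converts the count into a walk-counting identity, $M^3=q^{dk}M+(q^d-1)q^{k(d-1)}K$, where $K$ is the bipartite all-ones matrix, and then evaluates this identity on a third eigenvector $\mathbf{v}$, using $K\mathbf{v}=0$ to conclude $\lambda_3^3=q^{dk}\lambda_3$. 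You instead form the Gram matrix $NN^T$, split it as $q^{dk-k}J+C$ with $C$ block diagonal, diagonalize it completely, and transfer back to $M$ through singular values. Your version buys strictly more information, the full spectrum of $G$ with multiplicities (simple $\pm q^{(dk+d)/2}$, then $\pm q^{dk/2}$ and $0$), and it sidesteps the step the paper leaves implicit, namely that a $\lambda_3$-eigenvector is annihilated by $K$ (this follows from the discussion of $W$ and $W^{\perp}$ in the proof of Lemma \ref{expander}, but is only asserted in the proof of Lemma \ref{eigen1}). The paper's version, on the other hand, is shorter, needs no multiplicity bookkeeping, and is the template reused later for the point-line distance graph (Theorem \ref{hello}), where the analogue of your exact diagonalization would be considerably messier because the two-point counts there are not uniform across pairs.
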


\begin{proof}
Let $M$ be the adjacency matrix of $G$, so $M=\begin{bmatrix}
0&N\\
N^T&0
\end{bmatrix}$,
where $N$ is a $q^{d+k}\times q^{(d+1)k}$ matrix, with $N_{\mathbf{p}\mathbf{v}}=1$ if $\mathbf{p}\in \mathbf{v}$, $N_{\mathbf{p}\mathbf{v}}=0$ if $\mathbf{p}\not\in \mathbf{v}$.

Let $J$ be the $q^{d+k}\times q^{k(d+1)}$ all-one matrix and $K=\begin{bmatrix}
0&J\\
J^T&0
\end{bmatrix}.$
We prove that $M$ satisfies
\[M^3=q^{dk} M + (q^d-1)q^{k(d-1)}K.\]
 If $\mathbf{v}$ is an eigenvector corresponding to the third eigenvalue $\lambda_3$, then $K\mathbf{v}=0$. Therefore from the equation above one obtains that
$\lambda_3^3=q^{dk}\lambda_3,$ which implies that $|\lambda_3|= \sqrt{q^{dk}}$.

%We have\[M^3=\begin{bmatrix}0&NN^TN\\N^TNN^T&0\end{bmatrix}.\]
Let us observe that the $(\mathbf{p},\mathbf{v})$-entry of $M^3$ equals the number of walks of length three from $\p\in A$ to $\v\in B$,
that is the number of quadruples $(\p,\v',\p',\v)$, where $\p, \p'\in A, \v, \v'\in B,$ and $(\p, \v'), (\p', \v'),(\p', \v)$ are edges of $G$.

Given two points $\p=(p_1,\ldots,p_{d+k})$ and $\p'=(p_1',\ldots,p'_{d+k})$,
the varieties containing both $\p$ and $\p'$, and corresponding to $k$-tuples $(\mathbf{a_1},\ldots,\mathbf{a_k})\in \mathbb{F}_q^{(d+1)k}$ satisfies
\begin{equation}\label{eq:intersect111}
\begin{array}{ccl}
p_{d+i} & =&  h_i(p_1,\ldots,p_d)+ a_{i1}p_1^{b_{i1}}+\cdots+a_{id}p_d^{b_{id}}+a_{id+1}, \\
p_{d+i}' & =&   h_i(p_1',\ldots,p_d')+ a_{i1}(p_1')^{b_{i1}}+\cdots+a_{id}(p_d')^{b_{id}}+a_{id+1},
\end{array}
\end{equation}
for all $1\le i\le k$. Thus, for each $1\le i\le k$, we have
\begin{equation}\label{eq:subtract111}
p_{d+i}-p_{d+i}' = h_i(p_1,\ldots,p_d)-h_i(p_1',\ldots,p_d')+a_{i1}(p_1^{b_{i1}}-(p_1')^{b_{i1}})+\cdots+a_{id}(p_d^{b_{id}}-(p_d')^{b_{id}}).
\end{equation}
Let us observe that for each $a\in \mathbb{F}_q$, if $\gcd(r,q-1)=1$, then the equation $x^r=a^r$ has the unique solution $x=a$. Thus if $p_i=p_i'$ for all $1\le i \le d$, then there exists at least one variety containing both $p$ and $p'$ if and only if $p_{d+i}=p_{d+i}'$ for all $1\le i\le k$. This implies that $\p=\p'$.

We now count the number of walks of length three as follows. If $\p\not\in \v$, then we can choose $\p'\ne \p$ in $v$ such that $p_i\ne p_i'$ for some $1\le i\le d$ (otherwise, there is no $v'$ containing both $\p$ and $\p'$). We assume that $p_1\ne p_1'$, so $p_1^{b_{i1}}\ne (p_1')^{b_{i1}}$. Therefore, for each choice of $(a_{i2},\ldots, a_{id})$, $a_{i1}$ is determined uniquely by (\ref{eq:subtract111}), and $a_{id+1}$ is determined by any equation in (\ref{eq:intersect111}). In this case, the number of walks of length three is $(q^d-1)q^{k(d-1)}$.

If $\p\in \v$, then again there are $(q^d-1)q^{k(d-1)}$ walks with $\p\ne \p'$. Now we can choose $\p=\p'$. In this case, the number of walks equals the degree of $\p$. Thus if $\p\in \v$, then the number of walks of length three from $\p$ to $\v$ is $(q^d-1)q^{k(d-1)}+q^{dk}$.

In conclusion, $M$ satisfies
$M^3=q^{dk} M + (q^d-1)q^{k(d-1)}K$,
which completes the proof of the lemma.
\end{proof}
Combining Lemma \ref{expander} and Lemma \ref{eigen1}, Theorem \ref{dl2} follows.
We are now ready to prove Theorem \ref{dl1}.
\begin{proof}[Proof of Theorem \ref{dl1}]
 Let $\P'=\{p\times (0)^k: p\in\P\}$, then $|\P'|=|\P|$. Note that the number of incidences between points in $\P$ and varieties $W_{\mathbf{a}_1,\ldots,\mathbf{a}_k}$ is the number of incidences between points in $\P'$ and varieties $V_{\mathbf{a}_1,\ldots,\mathbf{a}_k}$. Therefore, Theorem \ref{dl1} follows immediately from Theorem \ref{dl2}.
\end{proof}
\begin{proof}[Proof of Corollary \ref{hello12}]
Let $s$ be the sphere of radius $r$ with the center $\mathbf{a}\in \F^d$, that is the set of points $(x_1,\cdots,x_d)\in\F^d$ satisfying
$(x_1-a_1)^2+\cdots+(x_d-a_d)^2=r.$
Therefore, we can re-write the formula for the points contained in $s$ as
\[x_1^2+\cdots x_d^2+\sum_{i=1}^da_ix_i-(r-\sum_{i=1}^da_i^2)=0.\]
Let $h(\mathbf{x})=x_1^2+\cdots+x_d^2$, $\mathbf{b}=(1, \ldots, 1)$, and $\mathbf{a}=\left(a_1,\ldots, a_d, -\left(r-\sum_{i=1}^da_i^2\right)\right)$. Then Corollary \ref{hello12} follows immediately from Theorem \ref{hello11}.
\end{proof}

\section{Proof of Theorem \ref{hello2}}
Let us define
\[\S:=\left\lbrace x_{d+1}=F(\mathbf{x},\q)\colon \q\in \P\right\rbrace, ~\P':=\left\lbrace(\p,t)\in\mathbb{F}_q^{d+1}\colon (\p,t)\in \P\times \Delta_{F}(\P, \p)\right\rbrace.\]
Since $F(\mathbf{x},\mathbf{y})$ is non-degenerate, $\S$ is a set of hypersurfaces. It follows from Theorem \ref{dl2} for the case $k=1$ that
\[e(\P',\S)\le \frac{|\P'||\S|}{q}+q^{d/2}\sqrt{|\P'||\S|}.\]
On the other hand, we have $e(\P',\S)=|\P|^2$, thus
\begin{eqnarray}\label{18}
|\P|^2 \le e(\P',\S)&\le& \frac{|\P'||\S|}{q}+q^{d/2}\sqrt{|\P'||\S|}.\nonumber\\
&=&\frac{|\P|\sum_{\p\in \P}|\Delta_F(\P,\p)|}{q}+q^{d/2}\sqrt{|\P|\sum_{\p\in \P}|\Delta_F(\P, \p)|}.
\end{eqnarray}
If $\sum_{\p\in \P}|\Delta_F(\P, \p)|\le (1-c^2)q|\P|$, then it follows from (\ref{18}) that \[ |\P|^2\le |\P|^2(1-c^2)+q^{(d+1)/2}|\P|\sqrt{(1-c^2)}.\]
This implies that
\begin{align*}
|\P|&<\sqrt{\frac{(1-c^2)}{c^4}}q^{(d+1)/2},
\end{align*}
which is a contradiction. Therefore, \begin{equation}\label{ineq1}\frac{1}{|\P|}\sum_{\p\in \P}|\Delta_F(\P, \p)|> (1-c^2)q.\end{equation}
Let $\P':=\lbrace \p\in \P\colon |\Delta_F(\P, \p)|>(1-c)q\rbrace$. Suppose that $|\P'|<(1-c)|\P|$, we have

\[\sum_{\p\in \P\setminus \P'}|\Delta_F(\P, \p)|\le (|\P|-|\P'|)(1-c)q, \text{ and } \sum_{\p\in \P'}|\Delta_F(\P, \p)|\le q|\P'|.\]
Putting everything together, we obtain
$$\sum_{\p\in \P}|\Delta_F(\P, \p)|\le  (1-c)q|\P|+cq|\P'|<(1-c)q|\P|+cq(1-c)|\P|=(1-c^2)q|\P|,$$
which contradicts (\ref{ineq1}), and the theorem follows.
\section{Proofs of Theorem \ref{beckthm} and Theorem \ref{radii}}
We first need the following lemma.
\begin{lemma}
There is a unique sphere in $\mathbb{F}_q^3$ passing through four given non-coplanar points.
\end{lemma}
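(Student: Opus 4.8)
The plan is to show both existence and uniqueness of a sphere through four non-coplanar points in $\mathbb{F}_q^3$ by reducing the problem to a linear system. A sphere in $\mathbb{F}_q^3$ has the form $(x_1-a_1)^2+(x_2-a_2)^2+(x_3-a_3)^2=r$, which expands to
\[x_1^2+x_2^2+x_3^2 + b_1 x_1 + b_2 x_2 + b_3 x_3 + e = 0,\]
where $b_i = -2a_i$ and $e = a_1^2+a_2^2+a_3^2-r$. Thus a sphere is determined by the four free coefficients $(b_1,b_2,b_3,e)$, with the quadratic part $x_1^2+x_2^2+x_3^2$ fixed. The key observation is that requiring a point $\mathbf{p}=(p_1,p_2,p_3)$ to lie on the sphere is a \emph{linear} condition on $(b_1,b_2,b_3,e)$: namely $b_1 p_1 + b_2 p_2 + b_3 p_3 + e = -(p_1^2+p_2^2+p_3^2)$.

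First I would set up the system. Given four points $\mathbf{p}^{(1)},\ldots,\mathbf{p}^{(4)}$, imposing that all four lie on the sphere yields four linear equations in the four unknowns $(b_1,b_2,b_3,e)$, with coefficient matrix
\[M=\begin{bmatrix} p_1^{(1)} & p_2^{(1)} & p_3^{(1)} & 1 \\ p_1^{(2)} & p_2^{(2)} & p_3^{(2)} & 1 \\ p_1^{(3)} & p_2^{(3)} & p_3^{(3)} & 1 \\ p_1^{(4)} & p_2^{(4)} & p_3^{(4)} & 1 \end{bmatrix}.\]
Both existence and uniqueness of the solution follow at once if $M$ is invertible, i.e. $\det M \neq 0$. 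The main step is therefore to argue that $\det M \neq 0$ is \emph{equivalent} to the four points being non-coplanar. Indeed, $\det M = 0$ exactly when the rows are linearly dependent, which (because of the final column of ones) is precisely the condition that the four points satisfy a common nontrivial affine relation $c_1 x_1 + c_2 x_2 + c_3 x_3 + c_4 = 0$ — that is, they lie on a common plane. Hence non-coplanarity gives $\det M \neq 0$, so the linear system has a unique solution, proving that exactly one choice of $(b_1,b_2,b_3,e)$ works.

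The one subtlety — and the step I expect to require the most care — is confirming that the unique solution genuinely corresponds to a \emph{sphere} and not a degenerate object, and that the notion of ``sphere'' here allows arbitrary center and radius (including possibly $r=0$ or $r$ a non-square) so that no further nondegeneracy of the quadratic form is needed. Since the quadratic part $x_1^2+x_2^2+x_3^2$ is fixed and never vanishes identically, any solution $(b_1,b_2,b_3,e)$ recovers a genuine center $a_i = -b_i/2$ (using that $q$ is odd, so $2$ is invertible) and radius $r = a_1^2+a_2^2+a_3^2 - e$; thus the solution always describes an honest sphere in the sense used throughout the paper. I would close by noting explicitly that invertibility of $M$ over $\mathbb{F}_q$ — equivalently $\det M \neq 0$ in the field — is the correct finite-field analogue of the real affine-independence condition, and that the argument uses only linear algebra over $\mathbb{F}_q$ together with the invertibility of $2$.
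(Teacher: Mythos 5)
Your proof is correct and follows essentially the same route as the paper's: linearize the sphere equation via the substitution $b_i=-2a_i$, $e=a_1^2+a_2^2+a_3^2-r$, set up the $4\times 4$ system with the points' coordinates bordered by a column of ones, and invoke non-coplanarity to get a nonzero determinant and hence a unique solution. In fact you are slightly more careful than the paper, since you spell out why non-coplanarity is equivalent to $\det M\neq 0$ and why the solution recovers an honest sphere (using that $2$ is invertible for odd $q$).
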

\begin{proof}
Let $\mathbf{p}_1=(a_1, a_2, a_3)$, $\mathbf{p}_2=(b_1, b_2, b_3)$, $\mathbf{p}_3=(c_1, c_2, c_3)$, and $\mathbf{p}_4=(d_1, d_2, d_3)$ be given non-coplanar points. We will show that there exists a unique sphere in $\mathbb{F}_q^3$  containing $\mathbf{p}_1$, $\mathbf{p}_2$, $\mathbf{p}_3$, and $\mathbf{p}_4$. In fact, a sphere passing through these four points can be written as
\[(x-e_1)^2+(y-e_2)^2+(z-e_3)^2=r, ~\textup{with}~ e_1, e_2, e_3, r\in \mathbb{F}_q.\]
Let $e_1'=-2e_1, e_2'=-2e_2, e_3'=-2e_3$, and $r'=e_1^2+e_2^2+e_3^2-r$. Then we obtain the following system of four equations
\begin{eqnarray*}\label{ssf}
a_1e_1'+a_2e_2'+a_3e_3'+r'&=&-a_1^2-a_2^2-a_3^2\\
b_1e_1'+b_2e_2'+b_3e_3'+r'&=&-b_1^2-b_2^2-b_3^2\\
c_1e_1'+c_2e_2'+c_3e_3'+r'&=&-c_1^2-c_2^2-c_3^2\\
d_1e_1'+d_2e_2'+d_3e_3'+r'&=&-d_1^2-d_2^2-d_3^2
\end{eqnarray*}
This system can be written as
\begin{equation}\label{eqf}\begin{pmatrix}
a_1&a_2&a_3&1\\
b_1&b_2&b_3&1\\
c_1&c_2&c_3&1\\
d_1&d_2&d_3&1
\end{pmatrix}\begin{pmatrix}
e_1'\\e_2'\\e_3'\\r'
\end{pmatrix}=\begin{pmatrix}
-a_1^2-a_2^2-a_3^2\\-b_1^2-b_2^2-b_3^2\\-c_1^2-c_2^2-c_3^2\\-d_1^2-d_2^2-d_3^2
\end{pmatrix}\end{equation}
Since $\mathbf{p}_i$'s are non-coplannar points, the determinant of the matrix on the left hand side of (\ref{eqf}) is not equal to $0$. Therefore, the system \ref{ssf} has an unique solution. In short, there is a unique sphere passing through any four given non-coplanar points.
\end{proof}
\begin{proof}[Proof of Theorem \ref{beckthm}]
Since $|\P|\ge 8q^2$, by the pigeon-hole principle, there exist two parallel planes $U$ and $V$ satisfying $|U\cap \P|\ge 5q$ and $|V\cap \P|\ge 8q$.  Let $\gamma$ be the direction which is orthogonal to $U$ and $V$. We set $E_1:=U\cap \P$ and $E_2:=V\cap \P$. It follows from Theorem \ref{becktype} that there are at least $4q^3/9$ distinct circles in $U$ determined by $E_1$. We denote the set of centers of these circles by $F_1$.

Let $f$ be the projection from $U$ to $V$ in the direction $\gamma$, and $F_2:=f(F_1)$. Then we have $|F_1|=|F_2|\ge 4q^2/9$. It follows from Corollary \ref{bodeds} that there exists a set $F_2'\subseteq F_2$ such that, for each point $\mathbf{p}\in F_2'$, we have $|\Delta_{\mathbb{F}_q}(F_2, \mathbf{p})|\ge q/2$. Thus, for each point $\mathbf{p}\in F_2'$, there exist at least $q/2$ circles centered at $\mathbf{p}$ of radii in $\Delta_{\mathbb{F}_q}(F_2, \mathbf{p})$. We denote the set of these circles by $C_{\mathbf{p}}$.

We note that $|F_2\setminus F_2'|=o(q^2)$, so the number of circles in $U$ with centers in $F_1\setminus f^{-1}(F_2')$ is $o(q^3)$. Therefore, the number of circles in $U$ with centers in $f^{-1}(F_2')$ is at least $2q^3/9$.

On the other hand, for each point $\mathbf{p}\in F_2'$, the spheres determined by a circle center at $f^{-1}(\mathbf{p})\in U$ and circles in $C_{\mathbf{p}}$ are distinct. Let $S_{\mathbf{p}}$ denote the set of the spheres associating $\mathbf{p}\in F_2'$. Then $S_{\mathbf{p}}\cap S_{\mathbf{q}}=\emptyset$ for any two points $\mathbf{p}$ and $\mathbf{q}$ in $F_2'$. Hence, the number of distinct spheres determined by $\P$ is at least $q^4/9$, and the theorem follows.
\end{proof}
\begin{proof}[Proof of Theorem \ref{radii}]
It follows from Theorem \ref{beckthm} that if $|\P|\ge 8q^2$, then the number of spheres determined by $\P$ is at least $q^4/9$. Since the cardinality of the set of centers of these spheres is at most $q^3$, the number of distinct radii of spheres determined by $\P$ is at least $q/9$, and the theorem follows.
\end{proof}
\section{Distinct distances between points and lines}
To prove results on the number of distinct distances between points and lines, we construct the \textit{point-line distance bipartite graph} as follows.
\subsection{Point-line distance bipartite graph}
Let $SQ:=\{x^2\colon x\in \F\}\setminus\{0\}$. We define the point-line distance bipartite graph $PL(\mathbb{F}_q^2)=(A\cup B, E)$ as follows. The first vertex part, $A$, is the set of all quadruples $(a, b, c, \lambda)\in \F^4$ satisfying $(a^2+b^2)\lambda\in SQ$. The second vertex part, $B$, is the set of all points in $\F^2$. There is an edge between $(a,b,c,\lambda)$ and $(x,y)$ if and only if $(ax+by+c)^2=\lambda(a^2+b^2)$. We have the following properties of the point-line distance bipartite graph $PL(\F^2)$.
\begin{lemma}\label{hai}
The degree of each vertex in $A$ is $2q$, and the degree of each vertex in $B$ is $2|S|$, where
\[S=\left\lbrace (a, b, \lambda)\in \F^3\colon \lambda(a^2+b^2)\in SQ\right\rbrace.\]
\end{lemma}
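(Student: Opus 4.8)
The plan is to compute both degrees directly, the key observation being that the defining quadratic equation $(ax+by+c)^2 = \lambda(a^2+b^2)$ splits into a pair of linear equations as soon as the right-hand side is known to be a nonzero square. In both cases the membership condition defining $A$ (resp.\ $S$) is precisely what guarantees this.

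For the degree of a fixed vertex $(a,b,c,\lambda)\in A$, I would start from the defining condition $(a^2+b^2)\lambda\in SQ$ and write $\lambda(a^2+b^2)=s^2$ with $s\in\F\setminus\{0\}$; in particular $a^2+b^2\neq 0$, so $(a,b)\neq(0,0)$. The neighbours of $(a,b,c,\lambda)$ are exactly the points $(x,y)$ with $(ax+by+c)^2=s^2$, i.e.\ $ax+by+c=s$ or $ax+by+c=-s$. These two equations describe two parallel affine lines in $\F^2$; since $(a,b)\neq(0,0)$ each carries exactly $q$ points, and since $q$ is odd we have $s\neq -s$, so the lines are distinct and disjoint. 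Adding the two counts gives degree $2q$.

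For the degree of a fixed vertex $(x,y)\in B$, I would enumerate the neighbour quadruples $(a,b,c,\lambda)$ by first choosing the triple $(a,b,\lambda)$ and then solving for $c$. Such a quadruple is a neighbour exactly when $(a^2+b^2)\lambda\in SQ$ and $(ax+by+c)^2=\lambda(a^2+b^2)$; the first condition is literally the statement $(a,b,\lambda)\in S$. Fixing any $(a,b,\lambda)\in S$ and writing $\lambda(a^2+b^2)=s^2$ with $s\neq 0$, the edge condition becomes $ax+by+c=\pm s$, which forces $c=-ax-by\pm s$, yielding exactly two values of $c$, distinct because $q$ is odd. Each resulting quadruple automatically lies in $A$ (its membership depends only on $(a^2+b^2)\lambda$, which we arranged to lie in $SQ$). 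Thus the map $(a,b,c,\lambda)\mapsto(a,b,\lambda)$ is a surjection onto $S$ that is exactly two-to-one on the neighbours of $(x,y)$, giving degree $2|S|$.

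I expect no genuine obstacle here: the argument is a short direct count. The only points requiring care are the automatic compatibility between the edge equation and the membership condition (the square on the left of $(ax+by+c)^2=\lambda(a^2+b^2)$ is what places the right-hand side in $SQ$), and the repeated use of $q$ odd to ensure $s\neq -s$, which rules out any collapse of the two lines, respectively the two values of $c$, and hence any double counting.
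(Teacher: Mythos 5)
Your proof is correct and takes essentially the same route as the paper's: both reduce the edge condition $(ax+by+c)^2=\lambda(a^2+b^2)$ to the pair of linear equations $ax+by+c=\pm s$, then count solutions in $(x,y)$ for the degree of a vertex in $A$ (two disjoint lines of $q$ points each) and solutions in $c$ for the degree of a vertex in $B$ (two values of $c$ per triple in $S$). If anything, you are slightly more explicit than the paper about the points that make the count exact, namely that $s\neq -s$ since $q$ is odd and that each reconstructed quadruple automatically lies in $A$.
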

\begin{proof}
Let $(a, b, c, \lambda)$ be a vertex in $A$. The degree of $(a, b, c, \lambda)$ is the number of solutions $(x, y)\in \F^2$ of the equation
\begin{equation}\label{mot}
(ax+by+c)^2=\lambda(a^2+b^2).
\end{equation}
Since $\lambda(a^2+b^2)\in SQ$, there exists $m\in \F\setminus \{0\}$ such that $\lambda(a^2+b^2)=m^2$. Since $(a,b,c,\lambda)$ is fixed, it follows from the equation (\ref{mot}) that $(x,y)$ is a solution of either equations of the following system
\[ax+by+c=m,~ax+by+c=-m.\]
Since $(a^2+b^2)\ne 0$, we can assume that $a\ne 0$. Therefore, for any choice of $y$ from $\F$, $x$ is determined uniquely, so the degree of $(a, b, c, \lambda)$ is $2q$.

Let $(x,y)$ be a vertex in $B$. The degree of $(x,y)$ is the number of solutions $(a, b, c,\lambda)\in A$ satisfying the equation (\ref{mot}). Note that if there is an edge between $(x,y)$ and $(a,b,c,\lambda)$, then $\lambda(a^2+b^2)\in SQ$ (this follows from the definition of $A$). Thus, for each triple $(a,b,\lambda)\in S$, we assume that $\lambda(a^2+b^2)=m^2$ for some $m\in \F$. It follows from
the equation (\ref{mot}) that $c$ is a solution of either equations of the following system
\[ax+by+c=m, ~ax+by+c=-m.\]
This implies that, for each triple $(a,b,\lambda)\in S$, there are exactly two values of $c$ such that $(a,b,c,\lambda)$ is adjacent to $(x,y)$. In short, the degree of each vertex in $B$ is $2|S|$.\end{proof}

\begin{lemma}\label{one}
Let $(a,b,c,\lambda)$ and $(d,e, f,\beta)$ be two distinct vertices in $A$, and $N$ be the number of  common neighbors of $(a,b,c,\lambda)$ and $(d,e,f,\beta)$. Then we have
\[N=\begin{cases}
q, ~\mbox{if $(d,e)=k(a,b)$ and $f\ne kc$, for some $k\in \F\backslash\{0\}$}\\
0, ~\mbox{if $(d,e,f)=k(a,b,c)$ and $\lambda\ne \beta$, for some $k\in \F\backslash\{0\}$}\\
2q, ~\mbox{if $(d,e,f)=k(a,b,c)$ and $\lambda= \beta$, for some $k\in \F\backslash\{0\}$}\\
4, ~\mbox{otherwise}
\end{cases}
\]
\end{lemma}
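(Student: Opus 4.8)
The plan is to read common neighbors directly off the two defining equations and then linearize using the square hypotheses built into $A$. A point $(x,y)\in B$ is a common neighbor of $(a,b,c,\lambda)$ and $(d,e,f,\beta)$ precisely when $(ax+by+c)^2=\lambda(a^2+b^2)$ and $(dx+ey+f)^2=\beta(d^2+e^2)$ hold simultaneously. Because both vertices lie in $A$, the two right-hand sides belong to $SQ$; writing $\lambda(a^2+b^2)=m^2$ and $\beta(d^2+e^2)=n^2$ with $m,n\neq 0$, each quadratic equation factors into a pair of \emph{distinct} parallel lines, namely $ax+by+c=\pm m$ and $dx+ey+f=\pm n$. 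Hence $N$ is nothing but the number of points incident to one line from the first pair and one line from the second, i.e.\ the size of the intersection of this four-line configuration. This reduces the whole lemma to elementary incidence geometry of lines.

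First I would split according to whether the normal directions $(a,b)$ and $(d,e)$ are proportional. If they are \emph{not} proportional (the ``otherwise'' case), the two pairs are transverse: each of the four ways of choosing one line from each pair meets in a single point, and since the two lines inside each pair are distinct and the directions independent, these four points are pairwise distinct, giving $N=4$. If instead $(d,e)=k(a,b)$ for some $k\neq 0$, then all four lines share the normal direction $(a,b)$ and are mutually parallel, so two of them meet if and only if they coincide, in which case they share a full line of $q$ points. To track coincidences I would pass to the single coordinate $g=ax+by$: the first pair sits at $g\in\{-c+m,\,-c-m\}$, symmetric about $-c$, and the second at $g\in\{-f/k+p,\,-f/k-p\}$, symmetric about $-f/k$, where $p^2=\beta(a^2+b^2)$ (a genuine square root, since $\beta(d^2+e^2)\in SQ$ and $k^2\in SQ$).

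Within the parallel case the two clean sub-cases come next. When $(d,e,f)=k(a,b,c)$ the underlying lines coincide and the second condition becomes $(ax+by+c)^2=\beta(a^2+b^2)$; comparing with $(ax+by+c)^2=\lambda(a^2+b^2)$ and using $a^2+b^2\neq 0$, the two conditions are incompatible when $\lambda\neq\beta$, so $N=0$, whereas when $\lambda=\beta$ the two vertices have identical neighborhoods and $N$ equals the common degree $2q$ from Lemma~\ref{hai}.

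The remaining sub-case, $(d,e)=k(a,b)$ with $f\neq kc$, is where I expect the real work to lie. Here the centers $-c$ and $-f/k$ of the two symmetric pairs are distinct, so the two pairs cannot coincide as sets; this alone forces at most one shared $g$-value, hence at most one coinciding line, and gives the easy bound $N\le q$. The main obstacle is the matching \emph{lower} bound: one must show that exactly one of the four offset relations $-c\pm m=-f/k\pm p$ is actually satisfied, equivalently that the gap $\delta=c-f/k$ is represented as $\pm m\pm p$. This is the only point in the argument that is not a routine line-count, and it is precisely where the $SQ$-constraints defining $A$ — which guarantee that $m$, $n$, and hence $p$ exist as square roots — must be exploited carefully. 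Once such a coincidence is exhibited, the coinciding line contributes all $q$ of its points and $N=q$ follows, closing the case analysis.
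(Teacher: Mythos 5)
Your reduction to a four-line configuration and your treatment of three of the four cases are correct, and they follow the same route as the paper (intersecting the two pairs of parallel lines obtained from the square roots). But the step you flag as ``the real work'' --- proving that when $(d,e)=k(a,b)$ and $f\ne kc$ at least one of the offset relations $-c\pm m=-f/k\pm p$ actually holds --- is a genuine gap in your proposal, and it cannot be filled, because the claim is false. Take $q=5$, $(a,b,c,\lambda)=(1,0,0,1)$ and $(d,e,f,\beta)=(1,0,1,1)$: both quadruples lie in $A$ (here $(a^2+b^2)\lambda=(d^2+e^2)\beta=1\in SQ$), they are distinct, $(d,e)=1\cdot(a,b)$, and $f=1\ne kc=0$. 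The neighbors of the first vertex are the points with $x^2=1$, i.e.\ the lines $x=1$ and $x=4$; the neighbors of the second are the points with $(x+1)^2=1$, i.e.\ the lines $x=0$ and $x=3$. These are disjoint, so $N=0$, not $q$. What is true in this case is only the dichotomy $N\in\{0,q\}$ (which your ``at most one shared $g$-value'' argument already gives): writing $m_1^2=\lambda(a^2+b^2)$ and $m_2^2=\beta(d^2+e^2)$, one has $N=q$ precisely when $f-kc\in\{\pm km_1\pm m_2\}$, which is exactly your representability condition for the gap $\delta$; both alternatives occur (replacing $f=1$ by $f=2$ in the example gives $N=q$).

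The comparison with the paper is instructive: the paper's proof fails at exactly the point you isolated. After subtracting the two quadratic equations it obtains the linear equation (\ref{aa}) and concludes that $N$ equals the number of solutions of that single equation, namely $q$. But (\ref{aa}) is only a necessary condition: the true solution set is the intersection of the line (\ref{aa}) with the parallel pair $ax+by+c=\pm m_1$, and since all three lines have normal direction $(a,b)$, that intersection is either a full line or empty. In the example above, (\ref{aa}) reads $2x+1=0$, i.e.\ $x=2$, and no point of that line is a common neighbor of the two vertices. So your instinct that the $SQ$-constraints ``must be exploited carefully'' was exactly right, but no amount of care closes the gap: Lemma \ref{one} itself must be corrected to the dichotomy above, and the error propagates into the walk counts of Lemmas \ref{nonajd} and \ref{adjacent}, into the identity (\ref{eqaa}) for $M^3$, and into the eigenvalue bound of Theorem \ref{hello}, all of which would need to be repaired (for instance by carrying the condition $f-kc\in\{\pm km_1\pm m_2\}$, or an inequality version of the count, through the computation).
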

\begin{proof}
The number of common neighbors of $(a,b,c,\lambda)$ and $(d,e,f,\beta)$ is the number of solutions $(x,y)\in \F^2$ of the following system
\begin{equation}
\begin{array}{ccl}\label{haha}
(ax+by+c)^2&=&\lambda(a^2+b^2)=m_1^2\\
(dx+ey+f)^2&=&\beta(d^2+e^2)=m_2^2,
\end{array}\end{equation}
for some $m_1, m_2\in \F\setminus\{0\}$.
This implies that $(x,y)$ is a solution of one of the following $4$ systems formed of the following $2$ equations, each system corresponding to a choice of $\pm$.
\[ax+by+c=\pm m_1, ~dx+ey+f=\pm m_2\]
Since $m_1, m_2\in \F\setminus\{0\}$, two such systems do not have a common solution.
We have the two following cases:
\begin{enumerate}
\item If $(a,b)$ and $(d,e)$ are linearly independent, then each system has unique solution, so the number of common neighbors of $(a,b,c,\lambda)$ and $(d,e,f, \beta)$ is $4$.
\item If $(a,b)$ and $(d,e)$ are linearly dependent, then we assume that $(d,e)=k(a,b)$ for some $k\in \F\backslash\{0\}$. It follows from the system (\ref{haha}) that
\begin{eqnarray}\label{haha1}
k^2(ax+by+c)^2&=&k^2\lambda(a^2+b^2)\nonumber\\
(kax+kby+f)^2&=&k^2\beta(a^2+b^2)\nonumber
\end{eqnarray}
Subtracting the first equation from the second equation, we obtain
\begin{equation}\label{ee}
(f-kc)(2kax+2kby+f+kc)=(a^2+b^2)k^2(\lambda-\beta).
\end{equation}

If $f=kc$ and $\lambda= \beta$, then the number of solutions $(x,y)$ of the system (\ref{haha}) is $\deg(a,b,c,\lambda)$, which by Lemma \ref{hai} equals $2q$.

If $f=kc$ and $\lambda\ne \beta$, then the number of solutions $(x,y)$ of the system (\ref{haha}) is $0$.

If $f\ne kc$, then from equation (\ref{ee}) follows that
\begin{equation}\label{aa}
2kax+2kby+f+kc=(f-kc)^{-1}(a^2+b^2)k^2(\lambda-\beta).
\end{equation}
Since $a^2+b^2\ne 0$, we assume that $a\ne 0$. Therefore, the number of solutions of the equation (\ref{aa}) is $q$, since we can choose $y$ arbitrary, and for each choice of $y$, $x$ is determined uniquely by the equation (\ref{aa}).  In other words, in this case, the number of common neighbors of $(a,b,c,\lambda)$ and $(d,e,f,\beta)$ is $q$.
\end{enumerate}
\end{proof}
In the following two lemmas, we count the number of walks of length three between a vertex $(a,b,c,\lambda)$ from $A$ and a vertex $(z,t)$ from $B$. This will be directly
related to obtaining the value for the third eigenvalue corresponding to the point-line distance graph. The first lemma treats the case when $(a,b,c,\lambda)$ and $(z,t)$ are
not adjacent, while the second lemma deals with the case when the two vertices are adjacent.

\begin{lemma}\label{nonajd}
Given a pair of non-adjacent vertices $(a,b,c,\lambda)$ and $(z,t)$, let $N$ be the number of walks of length three between them. Then we have
\[N=
\begin{cases}
4\left( 2|S|-(q-1)^2\right)+q\left( q-1\right)^2, ~\textup{if}~ az+bt+c=0\\
4\left( 2|S|-(q-1)^2\right)+q\left((q-1)^2-(q-1)\right), ~\textup{otherwise}
\end{cases}\]
\end{lemma}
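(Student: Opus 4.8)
The plan is to read off $N$ from the $\big((a,b,c,\lambda),(z,t)\big)$-entry of $M^3$, where $M$ is the adjacency matrix of $PL(\F^2)$, exactly as in the proof of Lemma \ref{eigen1}. A walk of length three from $(a,b,c,\lambda)\in A$ to $(z,t)\in B$ has the shape $(a,b,c,\lambda)\to(x,y)\to(d,e,f,\beta)\to(z,t)$ with $(x,y)\in B$ and $(d,e,f,\beta)\in A$. Summing first over the intermediate point $(x,y)$ turns the inner sum into the number of common neighbours of $(a,b,c,\lambda)$ and $(d,e,f,\beta)$, which I denote $\nu\big((a,b,c,\lambda),(d,e,f,\beta)\big)$, so that
\[
N=\sum_{(d,e,f,\beta)\in A,\ (d,e,f,\beta)\sim(z,t)} \nu\big((a,b,c,\lambda),(d,e,f,\beta)\big).
\]
The diagonal term $(d,e,f,\beta)=(a,b,c,\lambda)$, which would contribute $\deg(a,b,c,\lambda)$, is absent: since $(a,b,c,\lambda)$ and $(z,t)$ are non-adjacent, $(a,b,c,\lambda)$ is not a neighbour of $(z,t)$. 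Hence every summand has $(d,e,f,\beta)\ne(a,b,c,\lambda)$, and I may substitute the four values of $\nu$ supplied by Lemma \ref{one}.

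The computation then reduces to counting, among the $\deg(z,t)=2|S|$ vertices $(d,e,f,\beta)$ adjacent to $(z,t)$, how many fall into each regime of Lemma \ref{one}. First I would separate the case where $(d,e)$ and $(a,b)$ are linearly independent (the regime $\nu=4$) from the proportional case, counting the latter and obtaining the former by subtraction from $2|S|$. Writing $(d,e)=k(a,b)$ with $k\ne0$ and setting $s:=az+bt$, the adjacency relation reads $(ks+f)^2=\beta k^2(a^2+b^2)$, while membership $(d,e,f,\beta)\in A$ forces $\beta k^2(a^2+b^2)\in SQ$, i.e. $ks+f\ne0$; for each of the $q-1$ values of $k$ and each of the $q-1$ admissible values of $f$ the scalar $\beta$ is then uniquely determined, giving exactly $(q-1)^2$ proportional neighbours, a count independent of $\ell:=az+bt+c$. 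Thus the number of independent neighbours, each contributing $\nu=4$, is $2|S|-(q-1)^2$ in both cases.

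It remains to split these $(q-1)^2$ proportional neighbours between the regimes $\nu=q$ (when $f\ne kc$) and $\nu\in\{0,2q\}$ (when $(d,e,f)=k(a,b,c)$). A fully proportional neighbour has $f=kc$, hence $ks+f=k\ell$, so by $ks+f\ne0$ such neighbours exist only when $\ell\ne0$, and then their forced scalar is $\beta=\ell^2/(a^2+b^2)$. Here the non-adjacency hypothesis $(az+bt+c)^2\ne\lambda(a^2+b^2)$ is exactly the statement $\ell^2/(a^2+b^2)\ne\lambda$, so $\beta\ne\lambda$ always: the regime $\nu=2q$ is empty, and the regime $\nu=0$ consists of these $q-1$ fully proportional neighbours, present precisely when $\ell\ne0$. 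Consequently, if $\ell=0$ all $(q-1)^2$ proportional neighbours lie in the $\nu=q$ regime, whereas if $\ell\ne0$ exactly $(q-1)^2-(q-1)$ of them do. Substituting these counts yields
\[
N=4\big(2|S|-(q-1)^2\big)+q\,(q-1)^2\qquad\text{if }az+bt+c=0,
\]
and
\[
N=4\big(2|S|-(q-1)^2\big)+q\big((q-1)^2-(q-1)\big)\qquad\text{otherwise,}
\]
which is the claim.

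The hard part will be the bookkeeping of the proportional neighbours: one must respect the adjacency equation and the defining square condition of $A$ simultaneously (the latter is what rules out $f=-ks$ and pins down $\beta$), and one must recognise that the non-adjacency hypothesis is precisely what annihilates the $\nu=2q$ regime and, together with $ks+f\ne0$, produces the clean dichotomy $az+bt+c=0$ versus $az+bt+c\ne0$. Once these two observations are in place, the remainder is routine substitution into Lemma \ref{one}.
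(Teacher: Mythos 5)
Your proposal is correct and follows essentially the same route as the paper: both decompose each length-three walk through its middle vertex in $A$, reduce to the common-neighbour counts of Lemma \ref{one}, and split the neighbours of $(z,t)$ according to whether $(d,e)$ is proportional to $(a,b)$, with the non-adjacency hypothesis eliminating the $2q$-regime. The only difference is organizational: you count the $(q-1)^2$ proportional neighbours directly (via the constraint $f\ne -k(az+bt)$) and obtain the independent ones by subtraction from $\deg(z,t)=2|S|$, whereas the paper counts the independent triples directly and gets the proportional count by subtraction.
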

\begin{proof}
We can distinguish two cases:
\begin{enumerate}
\item The point $(z,t)$ lies on the line $ax+by+c=0$.
\begin{enumerate}
\item  First we count the number of neighbors $(d,e,f,\beta)$ of $(z,t)$ satisfying $(d,e)\ne k(a,b)$ for all $k\in \F\backslash\{0\}$.  It follows from the definition of $S$ that the number of triples $(d,e,\beta)$ satisfying $\beta(d^2+e^2)\in SQ$ and $d^2+e^2\ne k^2(a^2+b^2)$ for all $k\in \F\backslash\{0\}$ is $|S|-(q-1)^2/2$. Moreover, with each triple $(d,e,\beta)$ satisfying $\beta(d^2+e^2)\in SQ$, there are two solutions of $f$ such that $(d,e,f,\beta)$ is a neighbor of $(z,t)$. Thus, the number of neighbors $(d,e,f,\beta)$ of $(z,t)$ satisfying $(d,e)\ne k(a,b)$ for all $k\in \F\backslash\{0\}$ is $2|S|-(q-1)^2$.
\item We now count the number of neighbors $(d,e,f,\beta)$ of $(z,t)$ satisfying $(d,e,f)=k(a,b,c)$, for some  $k\in \F\backslash\{0\}$, and $\lambda\ne \beta$. If $(d,e,f,\beta)$ is a neighbor of $(z,t)$, then $(dz+et+f)^2=\beta(d^2+e^2)$, which implies that $k^2(az+bt+c)^2=\beta k^2(a^2+b^2)$.
Since $(z,t)$ lies on the line $ax+by+c=0$, we have $\beta=0$. Thus there is no neighbor $(d,e,f,\beta)$ of $(z,t)$ satisfying $(d,e,f)=k(a,b,c)$, and $\lambda\ne \beta$ for some $k\in \F\backslash\{0\}$.
\item Combining two above cases, we obtain that the number of neighbors $(d,e,f,\beta)$ of $(z,t)$ satisfying $(d,e)= k(a,b)$, $f\ne kc$ for some $k\in \F\backslash\{0\}$ is $(q-1)^2$.
\end{enumerate}
Thus, if $az+bt+c=0$, the number of walks of length three between $(z,t)$ and $(a,b,c,\lambda)$ is
$q\left(q-1\right)^2+4\left(2|S|-(q-1)^2\right)$, which finishes this case.
\item The point $(z,t)$ does not lie on $ax+by+c=0$.
\begin{enumerate}
\item By the same arguments as above, the number of neighbors $(d,e,f,\beta)$ of $(z,t)$ satisfying $(d,e)\ne k(a,b)$ for all $k\in \F\backslash\{0\}$ is $2|S|-(q-1)^2$.
\item The number of neighbors $(d,e,f,\beta)$ of $(z,t)$ satisfying $(d,e,f)=k(a,b,c), k\in \F\backslash\{0\}$, and $\lambda\ne \beta$ is $(q-1)$. Indeed, if $(d,e,f,\beta)$ is a neighbor of $(z,t)$, then $(dz+et+f)^2=\beta(d^2+e^2)$, which implies that $k^2(az+bt+c)^2=\beta k^2(a^2+b^2)$.
Since $(z,t)$ does not  lie on the line $ax+by+c=0$, $\beta=(az+bt+c)^{-2}(a^2+b^2)\ne 0$. It is easy to see that $\beta\ne \lambda$. Since there are $q-1$ choices of $k$, the number of neighbors $(d,e,f,\beta)$ of $(z,t)$ satisfying $(d,e,f)=k(a,b,c), k\in \F\backslash\{0\}$, and $\lambda\ne \beta$ is $(q-1)$.
\item Combining above cases implies that the number of neighbors $(d,e,f,\beta)$ of $(z,t)$ satisfying $(d,e)= k(a,b)$, $f\ne kc$ for some $k\in \F\backslash\{0\}$ is $(q-1)^2-(q-1)$.
\end{enumerate}
In other words, if $az+bt+c\ne 0$, then the number of walks of length three between $(z,t)$ and $(a,b,c,\lambda)$ is
$q\left((q-1)^2-(q-1)\right)+4\left(2|S|-(q-1)^2\right)$.
\end{enumerate}\end{proof}
%
%
%%
%%%
%%%%
%%%%
%%%%%
%%%%%
%%%%%

\begin{lemma}\label{adjacent}
Given a pair of adjacent vertices $(a,b,c,\lambda)$ and $(z,t)$, the the number of walks of length three between them is $4\left(2|S|-(q-1)^2\right)+2q(q-1)+q\left((q-1)^2-(q-1)\right)$.
\end{lemma}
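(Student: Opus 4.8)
The plan is to count the walks of length three from a vertex $(a,b,c,\lambda)\in A$ to an adjacent vertex $(z,t)\in B$ exactly as in the proof of Lemma~\ref{nonajd}: write such a walk as $(a,b,c,\lambda)\to (x,y)\to (d,e,f,\beta)\to (z,t)$ and sum, over all neighbors $(d,e,f,\beta)$ of $(z,t)$ in $A$, the number of common neighbors of $(a,b,c,\lambda)$ and $(d,e,f,\beta)$ in $B$. For the diagonal term $(d,e,f,\beta)=(a,b,c,\lambda)$ this count is simply $\deg(a,b,c,\lambda)=2q$ by Lemma~\ref{hai}, while for every other term it is supplied by Lemma~\ref{one}. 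The first observation is that adjacency of $(a,b,c,\lambda)$ and $(z,t)$ means $(az+bt+c)^2=\lambda(a^2+b^2)\in SQ$, so in particular $az+bt+c\ne 0$ and $(z,t)$ does not lie on the line $ax+by+c=0$; this places us in the geometric situation of case~2 of Lemma~\ref{nonajd}.

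First I would reuse the counting from Lemma~\ref{nonajd} to split the $2|S|$ neighbors $(d,e,f,\beta)$ of $(z,t)$ into three classes. There are $2|S|-(q-1)^2$ neighbors with $(d,e)$ linearly independent from $(a,b)$, each contributing $4$ common neighbors; there are $q-1$ neighbors with $(d,e,f)=k(a,b,c)$; and the remaining $(q-1)^2-(q-1)$ neighbors satisfy $(d,e)=k(a,b)$ with $f\ne kc$, each contributing $q$ common neighbors by Lemma~\ref{one}. The essential new point, and the only place where the adjacent case genuinely departs from Lemma~\ref{nonajd}, is the middle class: if $(d,e,f)=k(a,b,c)$ and $(d,e,f,\beta)$ is a neighbor of $(z,t)$, then the adjacency relation reads $k^2(az+bt+c)^2=\beta k^2(a^2+b^2)$, which forces $\beta=(az+bt+c)^2/(a^2+b^2)=\lambda$. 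Thus every one of these $q-1$ collinear neighbors has $\beta=\lambda$, so by the branch $(d,e,f)=k(a,b,c)$, $\lambda=\beta$ of Lemma~\ref{one} (and by the degree computation above for $k=1$) each contributes $2q$ common neighbors, rather than the $0$ they contributed in the non-adjacent case.

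Summing the three contributions then yields
\[
N=4\bigl(2|S|-(q-1)^2\bigr)+2q(q-1)+q\bigl((q-1)^2-(q-1)\bigr),
\]
as claimed. I expect the main obstacle to be bookkeeping rather than conceptual: one must check that the three class sizes add back up to $2|S|$ and, most delicately, verify that in the adjacent case the collinearity condition $(d,e,f)=k(a,b,c)$ automatically pins down $\beta=\lambda$, so that the branch $\lambda\ne\beta$ (contributing $0$) of Lemma~\ref{one} simply cannot occur here. Keeping the diagonal walk through $(a,b,c,\lambda)$ itself correctly folded into the $k=1$ term of the $2q(q-1)$ contribution is the one spot where an off-by-one error is easy to make.
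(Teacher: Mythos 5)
Your proposal is correct and follows essentially the same route as the paper: split the $2|S|$ neighbors of $(z,t)$ into the linearly independent class, the class $(d,e,f)=k(a,b,c)$ (where adjacency forces $\beta=\lambda$, contributing $2q$ each), and the class $(d,e)=k(a,b)$, $f\ne kc$, then apply Lemma~\ref{one}. Your explicit verification that $\beta=\lambda$ is pinned down by adjacency, and your handling of the diagonal term $k=1$ via the degree from Lemma~\ref{hai}, are points the paper states more tersely but are the same argument.
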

\begin{proof}
We now consider the following cases:
\begin{enumerate}
\item  As in the proof of Lemma \ref{nonajd}, we obtain that the number of neighbors $(d,e,f,\beta)$ of $(z,t)$ satisfying $(d,e)\ne k(a,b)$ for all $k\in \F\backslash\{0\}$ is $2|S|-(q-1)^2$.
\item  The number of neighbors $(d,e,f,\beta)$ of $(z,t)$ satisfying $(d,e,f)= k(a,b,c)$ and $\lambda=\beta$ (for some $k\in \F\backslash\{0\}$) is $(q-1)$, since $(a,b,c,\lambda)$ is a neighbor of $(z,t)$.
\item The number of neighbors $(d,e,f,\beta)$ of $(z,t)$ satisfying $(d,e,f)= k(a,b,c)$, and $\lambda\ne\beta$, for some $k\in \F\backslash\{0\}$ is $0$ since $(a,b,c,\lambda)$ is a neighbor of $(z,t)$.
\item Combining above cases implies that the number of neighbors $(d,e,f,\beta)$ of $(z,t)$ satisfying $(d,e)= k(a,b)$, $f\ne kc$ for some $k\in \F\backslash\{0\}$ is $(q-1)^2-(q-1)$.
\end{enumerate}
In other words, this gives that the number of walks of length three from $(a,b,c,\lambda)$ to $(z,t)$ is $q((q-1)^2-(q-1))+2q(q-1)+4\left(2|S|-(q-1)^2\right)$, which completes the proof.\end{proof}

\begin{theorem}\label{hello}
The absolute value of the third eigenvalue of the point-line distance graph $PL(\F^2)$ is at most $2q^{4/3}$.
\end{theorem}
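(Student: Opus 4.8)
The plan is to read the entries of $M^3$ off the two walk-counting lemmas and package them into a single matrix identity, in the spirit of the proof of Lemma~\ref{eigen1}. Let $M$ be the adjacency matrix of $PL(\F^2)$, written in the usual block form with off-diagonal block $N$ (the point-line distance incidence matrix); let $K$ be the all-ones block matrix (as in Lemma~\ref{expander}); and introduce the point-line incidence block matrix $P$ whose off-diagonal block $P_0$ has $((a,b,c,\lambda),(z,t))$-entry equal to $1$ exactly when $az+bt+c=0$. The $(p,v)$-entry of $M^3$ is the number of walks of length three counted in Lemmas~\ref{nonajd} and~\ref{adjacent}. Comparing the three values there, and using that an edge can never satisfy $az+bt+c=0$ (an edge forces $\lambda(a^2+b^2)\in SQ$, which is nonzero), one sees that a common base value $\kappa:=4(2|S|-(q-1)^2)+q((q-1)^2-(q-1))$ occurs for every pair, an extra $q(q-1)$ occurs exactly on the incidence relation $az+bt+c=0$, and an extra $2q(q-1)$ occurs exactly on the edges. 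Hence
\[M^3 = \kappa\,K + q(q-1)\,P + 2q(q-1)\,M.\]

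Next I would extract an eigenvalue equation. Since $PL(\F^2)$ is biregular (Lemma~\ref{hai}), its first two eigenvectors lie in $\mathrm{span}(\mathbf 1_A,\mathbf 1_B)$, so the eigenvector $\mathbf{v}$ of the third eigenvalue $\lambda_3$ is orthogonal to both $\mathbf 1_A$ and $\mathbf 1_B$; consequently $K\mathbf{v}=0$. Applying the identity to $\mathbf{v}$ and using $M\mathbf{v}=\lambda_3\mathbf{v}$ yields $\lambda_3^3\mathbf{v}=q(q-1)P\mathbf{v}+2q(q-1)\lambda_3\mathbf{v}$, which forces $\mathbf{v}$ to be an eigenvector of $P$ too, say $P\mathbf{v}=\mu\mathbf{v}$, with
\[\lambda_3^3 = 2q(q-1)\lambda_3 + q(q-1)\mu.\]
The problem thus reduces to bounding $|\mu|$, the eigenvalue of the incidence operator $P$ on a vector orthogonal to the constants.

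To control $\mu$ I would pass to the Gram matrix $P_0^TP_0$, for which the $B$-part $\mathbf{v}_B$ of $\mathbf{v}$ is an eigenvector with eigenvalue $\mu^2$. Its diagonal entry is the common $B$-degree $D$ of $P$, and for distinct points its off-diagonal entry counts the admissible quadruples whose line passes through both points: since two points determine a unique line, with $q-1$ nonzero rescalings and $(q-1)/2$ values of $\lambda$ per non-degenerate direction, this entry is $(q-1)^2/2$ when the connecting line is non-degenerate and $0$ when it is degenerate. Writing $C'$ for the adjacency matrix of the graph on $\F^2$ joining two points whose line is degenerate, this gives $P_0^TP_0=(D-\tfrac{(q-1)^2}{2})I+\tfrac{(q-1)^2}{2}J-\tfrac{(q-1)^2}{2}C'$; on $\mathbf 1_B^{\perp}$ the term $J$ drops out, so $\mu^2=D-\tfrac{(q-1)^2}{2}(1+\nu)$, where $\nu$ is the eigenvalue of $C'$ on $\mathbf{v}_B$. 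The genuinely delicate point is the spectrum of $C'$, which depends on $q\bmod 4$: for $q\equiv3\pmod4$ there are no degenerate directions and $C'=0$, while for $q\equiv1\pmod4$ there are two degenerate directions and $C'$ is a union of two transversal families of $q$ disjoint $q$-cliques, whose eigenvalues on $\mathbf 1^{\perp}$ (read off in the character basis of $\F^2$) are only $q-2$ and $-2$. In either case $|\nu|\le q$, and a short computation (using $D=(q-1)^3/2$ when $q\equiv1\pmod4$) gives $\mu^2\le q(q-1)^2/2$, hence $|\mu|<q^{3/2}$.

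Finally I would feed this into the cubic relation. From $|\lambda_3|^3\le 2q^2|\lambda_3|+q(q-1)|\mu|\le 2q^2|\lambda_3|+q^{7/2}$ and the fact that $x\mapsto x^3-2q^2x$ is increasing for $x\ge q$, substituting $x=2q^{4/3}$ makes the left side $8q^4-4q^{10/3}$, which exceeds $q^{7/2}$ for all large $q$; hence $|\lambda_3|<2q^{4/3}$. The argument in fact yields the sharper $|\lambda_3|=O(q^{7/6})$, and $2q^{4/3}$ is a convenient, slightly lossy form. The main obstacle, as noted, is the spectral analysis of the degenerate-line graph $C'$, which is exactly where the case split on $q\bmod 4$ enters.
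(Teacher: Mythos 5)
Your proposal retraces the paper's own proof of Theorem \ref{hello}: the same packaging of Lemmas \ref{nonajd} and \ref{adjacent} into an identity $M^3=\kappa K+q(q-1)P+2q(q-1)M$ (the paper's equation (\ref{eqaa}) with $P=A_{\mathcal{IN}}$), the same observation that $K\mathbf{v}=0$ for the third eigenvector, and the same cubic inequality for $\lambda_3$. Your genuine departure is the treatment of $\mu$: the paper bounds it by the Perron eigenvalue $\sqrt{q|S|}\le q^2$ of the biregular graph $\mathcal{IN}$, whereas you diagonalize the Gram matrix $P_0^TP_0$, with the correct case split on $q\bmod 4$ and the correct spectrum of the degenerate-direction graph $C'$, obtaining $\mu^2\le q(q-1)^2/2$. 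That computation is correct (it is in fact the exact nontrivial spectrum of $\mathcal{IN}$) and is a true sharpening of the paper's estimate.

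The fatal problem is upstream, in the step you import wholesale from the paper: the identity $M^3=\kappa K+q(q-1)P+2q(q-1)M$ is false for $q>3$, because Lemma \ref{one} (on which Lemmas \ref{nonajd} and \ref{adjacent} rest) is false; consequently your proof and the paper's both collapse at this point, and in fact the stated theorem is itself false for large $q$. To see the identity cannot hold, sum each side over the row of a vertex $u\in A$: the left side counts all walks of length three from $u$, namely $2q\cdot 2|S|\cdot 2q=8q^2|S|$, while the right side gives $\kappa q^2+q^2(q-1)+4q^2(q-1)=8q^2|S|+q^2(q-1)(q-3)^2$. To see the theorem cannot hold, note that $\mathrm{tr}(M^2)=2|E|=4q^2|S|\approx 2q^5$, that the top two eigenvalues contribute only $2\deg(A)\deg(B)=8q|S|\approx 4q^4$, and that $M$ has at most $2\,\mathrm{rank}(N)\le 2q^2$ nonzero eigenvalues; hence
\[\lambda_3^2\ \ge\ \frac{4q^2|S|-8q|S|}{2q^2}\ \approx\ q^3,\]
so $\lambda_3\gtrsim q^{3/2}$, which exceeds $2q^{4/3}$ (and a fortiori your claimed $O(q^{7/6})$) once $q$ is large; a character computation shows $\lambda_3=\sqrt{q(q-1)(q-2)}$ exactly. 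The root error in Lemma \ref{one} is the case $(d,e)=k(a,b)$, $f\ne kc$: equation (\ref{ee}) is only the \emph{difference} of the two quadratic equations, hence a necessary but not sufficient condition for a common neighbour, and counting its $q$ solutions overcounts. The correct count is $q$ when $f-kc\in\{\pm m_2\pm km_1\}$ (at most four values) and $0$ otherwise. For a concrete counterexample over $\mathbb{F}_5$, take $(a,b,c,\lambda)=(1,0,0,1)$ and $(d,e,f,\beta)=(1,0,1,1)$: their neighbourhoods are the point sets with $x\in\{1,4\}$ and $x\in\{0,3\}$ respectively, so they share no neighbour, while Lemma \ref{one} predicts $q=5$ common neighbours. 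Ironically, your own (correct) spectral analysis of $P$, checked against the trace identity above, is exactly the kind of consistency test that exposes the flaw; as written, however, the proposal fails at the same step as the paper, and no argument can rescue the bound $2q^{4/3}$ because the statement itself is false.
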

\begin{proof}
Let $M$ be the adjacency matrix of $PL(\F^2)$, which has the form
$$M=\begin{bmatrix}
0&N\\
N^T&0
\end{bmatrix},
$$
where $N$ is a $|A|\times |B|$ matrix, and $N_{(a,b,c,\lambda), (x,y)}=1$ if there is an edge between $(a,b,c,\lambda)\in A$ and $(x,y)$ in $B$, and zero otherwise. Therefore,
$$M^3=\begin{bmatrix}
0&NN^TN\\
N^TNN^T&0
\end{bmatrix}.
$$
Let $J$ be the $|A|\times |B|$ all-one matrix. We set
\[K=\begin{bmatrix}
0&J\\
J^T&0
\end{bmatrix}.\]
It follows from Lemma \ref{nonajd} and Lemma \ref{adjacent} that
\begin{equation}\label{eqaa}M^3=\left(4\left(2|S|-(q-1)^2\right)+q((q-1)^2-(q-1))\right)K+2q(q-1)M+q(q-1)A_\mathcal{IN},\end{equation}
where $A_\mathcal{IN}$ is the adjacency matrix of the bipartite graph $\mathcal{IN}=(A\cup B,E_{\mathcal{IN}})$ defined as follows: there is an edge between $(z,t)\in B$ and $(a,b,c,\lambda)\in A$ if and only if $(z,t)$ lies on the line $ax+by+c=0$.  It is easy to check that in the graph $\mathcal{IN}$, the degree of each vertex $(z,t)$ is $|S|$, and the degree of each vertex $(a,b,c,\lambda)$ is $q$. Thus, the largest eigenvalue of $\mathcal{IN}$ is bounded from above by $\sqrt{q|S|}$.

Let $\mathbf{v_3}$ be an eigenvector corresponding to the third eigenvalue of the point-line distance graph $PL(\F^2)$. Then it follows from the equation (\ref{eqaa}) that
\[(\lambda_3^3-2q(q-1)\lambda_3)\mathbf{v_3}=q(q-1)A_{\mathcal{IN}}\mathbf{v_3},\]
since $K\mathbf{v_3}=0$. This implies that $\mathbf{v_3}$ is an eigenvector of the matrix $q(q-1)A_{\mathcal{IN}}$, with the corresponding eigenvalue
\[\lambda_3^3-2q(q-1)\lambda_3\le q(q-1)\sqrt{q|S|}.\]
Note that if $-1$ is not a square in $\F$, then $|S|=(q-1)^2(q+1)/2$, and if $-1$ is a square in $\F$, then $|S|=(q-1)(q^2-2q+1)/2$. Thus, in both cases, we have $|S|\le q^3$.  This implies that
\[\lambda_3^3-2q(q-1)\lambda_3\le q^4,\]
Let $f(x)=x^3-2q(q-1)x-q^4=0$, so $f'(x)=3x^2-2q(q-1)$. Thus, $f'(x)\ge 0$ if $x\ge \sqrt{2q(q-1)/3}$. On the other hand, if $x=2q^{4/3}$, then $f(x)> 0$, which implies that $f(x)\le 0$ if $x<2q^{4/3}$. Therefore, $\lambda_3\le 2q^{4/3}$, which concludes the proof of the theorem.\end{proof}

\subsection{Proofs of Theorem \ref{thm1} and Corollary \ref{co1}}
In order to prove Theorem \ref{thm1}, we need the following result, proved in \cite{vinh-incidence}, which is also a corollary of Theorem \ref{dl2}.
\begin{theorem}[\textbf{Vinh}, \cite{vinh-incidence}]\label{pointlinein}
Let $\P$ be a set of points and $\L$ a set of lines. Then
\[\left\vert I(\P, \L)-\frac{|\P||\L|}{q}\right\vert\le q^{1/2}\sqrt{|\P||\L|},\]
where $I(\P, \L)$ represents the number of incidences between points in $\P$ and lines in $\L$.
\end{theorem}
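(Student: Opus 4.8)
The point--line case is exactly the specialization $d=k=1$ of the framework behind Theorem~\ref{dl2}, and my first move is to make this explicit. Taking $h_1\equiv 0$ and $b_{11}=1$ (so that $\gcd(b_{11},q-1)=1$), the variety
\[V_{\mathbf{a}_1}=V\!\left(x_2-a_{11}x_1-a_{12}\right)=\{(x_1,x_2)\in\F^2:\ x_2=a_{11}x_1+a_{12}\}\]
is precisely the non-vertical line of slope $a_{11}$ and intercept $a_{12}$, and as $\mathbf{a}_1=(a_{11},a_{12})$ ranges over $\F^2$ we obtain all $q^2$ non-vertical lines. Hence, writing $\L_{\mathrm{nv}}\subseteq\L$ for the non-vertical part, Theorem~\ref{dl2} with $d=k=1$ gives at once
\[\left|I(\P,\L_{\mathrm{nv}})-\frac{|\P||\L_{\mathrm{nv}}|}{q}\right|\le q^{1/2}\sqrt{|\P||\L_{\mathrm{nv}}|}.\]
In this sense the bound is literally a corollary of Theorem~\ref{dl2} for the bulk of the lines.

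The point where one must be careful is twofold: every $V_{\mathbf{a}_1}$ is a graph over the $x_1$-coordinate, so this parametrization misses the $q$ vertical lines $x_1=c$, and the sharp constant $1$ is fragile. One could dispose of the vertical lines by applying Theorem~\ref{dl2} a second time after the swap $\sigma:(x_1,x_2)\mapsto(x_2,x_1)$, under which each vertical line becomes a line of slope $0$ and is again of the form $V_{\mathbf{a}_1}$; adding the two estimates via $I(\P,\L)=I(\P,\L_{\mathrm{nv}})+I(\P,\L_{\mathrm{v}})$ and the triangle inequality proves the theorem up to a constant. I expect this constant to be the real obstacle: the naive split costs a factor $\sqrt2$, since $\sqrt{|\L_{\mathrm{nv}}|}+\sqrt{|\L_{\mathrm{v}}|}$ can exceed $\sqrt{|\L|}$.

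To recover the sharp statement I would instead transplant the eigenvalue computation of Lemma~\ref{eigen1} onto the complete affine point--line incidence graph $G'$, whose two parts are all $q^2$ points and all $q^2+q$ lines of $\F^2$; this graph is biregular with $\deg=q+1$ on points and $\deg=q$ on lines. The identity replacing the walk count of Lemma~\ref{eigen1} is classical and immediate: two distinct points lie on exactly one common line and each point lies on $q+1$ lines, so the point--point adjacency satisfies $NN^{T}=qI+J$, where $J$ is the all-ones matrix. The non-principal eigenvalue of $NN^{T}$ is therefore $q$, giving third eigenvalue $\lambda_3=q^{1/2}$ for $G'$. Feeding $\lambda_3=q^{1/2}$ into the expander mixing lemma (Lemma~\ref{expander}) with $X=\P$, $Y=\L$, and noting $a/|B|=(q+1)/(q^2+q)=1/q$, yields exactly
\[\left|I(\P,\L)-\frac{|\P||\L|}{q}\right|\le q^{1/2}\sqrt{|\P||\L|},\]
now with all lines included and the constant equal to $1$. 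In summary, the non-vertical lines are handled verbatim by Theorem~\ref{dl2}, while the clean constant comes from running that theorem's own proof on the full affine incidence graph, where $NN^{T}=qI+J$ does the work of the walk count.
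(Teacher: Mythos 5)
Your proof is correct, and it is actually more careful than the paper's own treatment. The paper never writes out a proof of Theorem \ref{pointlinein}: it cites Vinh's original article and simply asserts that the bound ``is also a corollary of Theorem \ref{dl2}.'' Your first step reconstructs exactly that corollary, and your second step puts a finger on the genuine gap in it: with $d=k=1$, $h_1\equiv 0$, $b_{11}=1$, the varieties $V_{\mathbf{a}_1}$ are the graphs $x_2=a_{11}x_1+a_{12}$, so Theorem \ref{dl2} covers only non-vertical lines, and patching in the $q$ vertical lines by a coordinate swap plus the triangle inequality degrades the constant to $\sqrt{2}$ in the worst case (e.g.\ when $|\L_{\mathrm{nv}}|=|\L_{\mathrm{v}}|$), which proves a weaker inequality than the one stated. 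Your repair --- running the spectral argument directly on the full affine incidence graph, where biregularity is $(q+1,q)$, the identity $NN^{T}=qI+J$ follows from ``two distinct points determine exactly one line,'' hence $|\lambda_3|=q^{1/2}$, and then applying Lemma \ref{expander} with $a/|B|=(q+1)/(q^2+q)=1/q$ --- is complete and recovers the sharp constant with all lines included; it is in effect a reconstruction of Vinh's original proof, with the walk-counting of Lemma \ref{eigen1} replaced by the cleaner common-neighbor computation available in this special case. In short: the paper's implicit route is shorter but, read literally, yields the theorem only for line sets free of vertical lines (or with a worse constant), whereas your route is self-contained within the paper's own toolkit and proves the stated inequality exactly.
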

\begin{proof}[Proof of Theorem \ref{thm1}]
First we prove that \[\frac{1}{|\L|}\sum_{l\in \L}|\Delta_{\mathbb{F}_q}(l,\P)|> (1-c^2)q.\]
For each $l\in \L$, let denote the set of non-zero distances between $l$ and $\P$ by $\Delta_{\F}(l, \P)$. For each line $l=\{ax+by+c=0\}$, we define
\[D_l:=\{(a,b,c, \lambda)\colon  \lambda\in \Delta_{\F}(l, \P) \}.\]
Let $D=\cup_{l\in \L}D_l$.  Since $\L$ is a set of lines in $\F^2$, $D$ becomes a set of points in $\F^4$. Therefore, $|D_l|=|\Delta_{\F}(l, \P)|$, and $|D|=\sum_{l\in \L}|\Delta_{\F}(l, \P)|$. One can observe that each point $(a,b,c,\lambda)$ in $D$ satisfies the condition $\lambda(a^2+b^2)\in SQ$. It follows from the definition of $D$ and Theorem \ref{pointlinein} that
\begin{equation}\label{eq2}e(D, \P)=|\P||\L|-I(\P, \L)\ge \frac{|\P||\L|}{2},\end{equation}
where $e(D, \P)$ is the number of edges between $D$ and $\P$ in the point-line graph. On the other hand, we now prove that
\[e(D, \P)\le 2(1-c^2)|\P||\L|+q^{4/3}\sqrt{(1-c^2)|\P||\L|}.\]
Let $U=\left\lbrace (ka,kb,kc,\lambda)\colon k\in \F\backslash\{0\}, (a,b,c,\lambda)\in D \right\rbrace$.
Since the lines in $\L$ are distinct, no two points from $U$ coincide, so $|U|=(q-1)|D|$. If there is an edge between a point $p\in \P$ and a point $(a,b,c, \lambda)$ in $D$, then there is also an edge between $p$ and each point $(ka,kb,kc,\lambda)\in U$ where $k\in \F\backslash\{0\}$. Therefore, $e(U, \P)=(q-1)e(D, \P)$. On the other hand, it follows from Lemma \ref{expander} and Lemma \ref{hello} that
\begin{eqnarray}
e(U, \P)\le \frac{2|U||\P|}{q}+2q^{4/3}\sqrt{|U||\P|}&=&\frac{2|\P|(q-1)\sum_{l\in \L}|\Delta_{\F}(l, \P)|}{q}\nonumber\\&+&2q^{4/3}\sqrt{|\P|(q-1)\sum_{l\in \L}|\Delta_{\F}(l, \P)|}\nonumber.
\end{eqnarray}
If $\sum_{l\in \L}|\Delta_{\mathbb{F}_q}(l,\P)|< (1-c^2)q|\L|$ with $2(1-c^2)<1/2$, then we obtain
\begin{eqnarray}
e(U, \P)&\le& \frac{2|\P|(q-1)\sum_{l\in \L}|\Delta_{\F}(l, \P)|}{q}+2q^{4/3}\sqrt{|\P|(q-1)\sum_{l\in \L}|\Delta_{\F}(l, \P)|}\nonumber\\
&<&2(1-c^2)|\P|(q-1)|\L|+2q^{4/3+1}\sqrt{(1-c^2)|\P||\L|}.
\end{eqnarray}
Since $e(U, \P)=(q-1)e(D, \P)$, we obtain
\begin{equation}\label{eq1}e(D, \P)\le 2(1-c^2)|\P||\L|+2q^{4/3}\sqrt{(1-c^2)|\P||\L|}.\end{equation}
Combining the equation (\ref{eq2}) and the equation (\ref{eq1}), we obtain
\[\left(1/2-2(1-c^2)\right)|\P||\L|\le 2\sqrt{1-c^2}q^{4/3}\sqrt{|\P||\L|},\]
which implies that
\[|\P||\L|\le \frac{4(1-c^2)}{(1/2-2(1-c^2))^2}q^{8/3},\]
which contradicts the assumption in the hypothesis. In other words, we have that
\begin{equation}\label{hehehe}\frac{1}{|\L|}\sum_{l\in \L}|\Delta_{\mathbb{F}_q}(l,\P)|> (1-c^2)q.\end{equation}
Let $\L':=\lbrace l\in \L: |\Delta_{\F}(\P,l)|>(1-c)q\rbrace$. Suppose  that $|\L'|<(1-c)|\L|$, so
\begin{equation}\label{pt11.3}
\sum_{l\in \L\setminus \L'}|\Delta_{\F}(\P,l)|\le (|\L|-|\L'|)(1-c)q, \; \text{and}
\end{equation}
 \begin{equation}\label{pt11.2} \sum_{l\in \L'}|\Delta_{\F}(\P,l)|\le q|\L'|.\end{equation}
Putting (\ref{pt11.3}) and (\ref{pt11.2}) together, we obtain
$$\sum_{l\in \L}|\Delta_{\F}(\P,l)|\le  (1-c)q|\L|+cq|\L'|<(1-c)q|\L|+cq(1-c)|\L|=(1-c^2)q|\L|,$$
which contradicts (\ref{hehehe}). Therefore, there exists a subset $\L'$ of $\L$ such that $|\L'|=(1-o(1))|\L|$, and $|\Delta_{\F}(\P, l)|\gtrsim q$, for each $l\in \L'$, which completes the proof.\end{proof}

In order to prove Corollary \ref{co1}, we need the following result, which as already mentioned in the introduction, is a variant of Beck's theorem over finite fields.
\begin{theorem}(\cite[Corollary 5]{lund})\label{thm4}
Let $\P$ be a set of points in $\F^2$. If $|\P|\ge 3q$, then the number of distinct lines determined by $\P$ is at least $q^2/3$.
\end{theorem}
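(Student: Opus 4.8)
The plan is to classify the $q^2+q$ lines of $\F^2$ by how many points of $\P$ they contain and to show that the lines meeting $\P$ in a single point cannot absorb too many of the $|\P|(q+1)$ total point-line incidences. Write $n_1$ for the number of lines containing exactly one point of $\P$, let $\L$ denote the set of \emph{determined} lines (those with at least two points of $\P$), write $k_\ell$ for the number of points of $\P$ on a line $\ell$, and set $I_2\assign I(\P,\L)=\sum_{\ell\in\L}k_\ell$. Since every incidence occurs on a line carrying $0$, $1$, or at least $2$ points of $\P$, conservation of incidences gives $I_2+n_1=|\P|(q+1)$; thus a good upper bound on $n_1$ converts directly into a lower bound on $I_2$.

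First I would bound $n_1$ by applying Theorem \ref{pointlinein} to $\P$ and to the family $L_1$ of lines meeting $\P$ in exactly one point. For this family $I(\P,L_1)=|L_1|=n_1$, so the lower tail of the incidence bound reads $n_1\ge |\P|n_1/q-q^{1/2}\sqrt{|\P|n_1}$. Because $|\P|\ge 3q>q$, the main term $|\P|n_1/q$ exceeds $n_1$, and rearranging gives $n_1(|\P|/q-1)\le q^{1/2}\sqrt{|\P|n_1}$, i.e. $n_1\le q^3|\P|/(|\P|-q)^2$. For $|\P|=3q$ this is $n_1\le \tfrac34 q^2$, whence $I_2=|\P|(q+1)-n_1\ge \tfrac94 q^2$.

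Next I would turn this incidence lower bound into a lower bound on $|\L|$ by a second-moment computation. Every ordered pair of distinct points of $\P$ is collinear on a unique line, which lies in $\L$, so $\sum_{\ell\in\L}k_\ell(k_\ell-1)=|\P|(|\P|-1)$, and therefore $\sum_{\ell\in\L}k_\ell^2=|\P|(|\P|-1)+I_2$. Cauchy--Schwarz applied to $I_2=\sum_{\ell\in\L}k_\ell$ then yields $I_2^2\le |\L|\bigl(|\P|(|\P|-1)+I_2\bigr)$, i.e. $|\L|\ge I_2^2/\bigl(|\P|(|\P|-1)+I_2\bigr)$. Substituting $|\P|=3q$, $I_2\ge\tfrac94 q^2$, and $|\P|(|\P|-1)=9q^2-3q$ gives $|\L|\ge \tfrac{9}{20}q^2>q^2/3$ with room to spare; a short calculation shows $q^3|\P|/(|\P|-q)^2$ is decreasing in $|\P|$ and that the resulting bound on $|\L|$ increases with $|\P|$, so the case $|\P|=3q$ is extremal and the general hypothesis follows.

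The conceptual point to get right, and the main obstacle, is the choice of which line family to feed into the incidence bound. The classical Beck argument estimates the number of pairs carried by \emph{rich} lines, but that route fails here: the spectral bound of Theorem \ref{pointlinein} is tight only when $|\P|,|\L|\gg q^{3/2}$, and at the threshold $|\P|\sim q$ it is far too weak to show that rich lines carry a negligible fraction of the pairs. Applying the bound instead to the one-point lines $L_1$ places us in exactly the regime $|\P|/q>1$ in which its main term dominates and the inequality becomes effective, and this is what makes the constant $q^2/3$ attainable. Everything after that is bookkeeping of constants in the last two estimates, which the slack above shows is comfortable.
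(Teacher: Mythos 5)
Your proof is correct, but there is nothing in the paper to check it against: the paper never proves Theorem \ref{thm4}, it simply imports it as Corollary 5 of Lund and Saraf \cite{lund}, whose own derivation goes through their incidence machinery for block designs. Your argument is therefore a genuine alternative, and a welcome one, because it is self-contained given Theorem \ref{pointlinein}, which the paper already states. I verified the steps: the conservation identity $n_1+I_2=|\P|(q+1)$ holds since each point lies on exactly $q+1$ lines of $\F^2$; the lower tail of Theorem \ref{pointlinein} applied to the one-point lines gives $n_1(|\P|-q)/q\le q^{1/2}\sqrt{|\P|n_1}$, hence $n_1\le q^3|\P|/(|\P|-q)^2$, and this bound is indeed decreasing in $|\P|$ (its derivative is $-q^3(|\P|+q)/(|\P|-q)^3<0$), so $n_1\le\tfrac34 q^2$ for all $|\P|\ge 3q$; the pair count $\sum_{\ell\in\L}k_\ell(k_\ell-1)=|\P|(|\P|-1)$ and the Cauchy--Schwarz bound $|\L|\ge I_2^2/\bigl(|\P|(|\P|-1)+I_2\bigr)$ are right, and the right-hand side is increasing in $I_2$, so substituting the lower bound for $I_2$ is legitimate. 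Your extremality claim also checks out: writing $|\P|=tq$ with $t\ge 3$, the denominator is exactly $|\P|^2+|\P|q-\tfrac34 q^2=q^2\bigl(t^2+t-\tfrac34\bigr)$, giving $|\L|\ge q^2\bigl(t-\tfrac34\bigr)^2/\bigl(t^2+t-\tfrac34\bigr)$, which is increasing in $t$ (the derivative has sign $(t-\tfrac34)\bigl(\tfrac52 t-\tfrac34\bigr)>0$) and equals $\tfrac{9}{20}q^2$ at $t=3$, comfortably above $q^2/3$. Two comparative remarks: your route actually proves the stronger constant $9/20$ at the threshold $|\P|=3q$; and conceptually it stays entirely inside the spectral toolkit the paper develops (bounding the poor lines by the lower tail of the incidence bound, then converting the pair count into a line count by Cauchy--Schwarz), which is close in spirit to the Beck-type arguments of \cite{area, iosevichetal, lund} but avoids the block-design generality of \cite{lund}; what the citation buys the paper is brevity, while what your proof buys is self-containment and an explicit, slightly better constant.
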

\begin{proof}[Proof of Corollary \ref{co1}]
One can check that the number of degenerate lines is at most $2q$. Therefore, the proof of Corollary \ref{co1} follows immediately from Theorem \ref{thm1} and Theorem \ref{thm4}.\end{proof}

\end{document}